\numberwithin{equation}{section}\newtheorem{theorem}{Theorem}[section]
\newtheorem{lemma}[theorem]{Lemma}
\newtheorem{proposition}[theorem]{Proposition}\theoremstyle{remark}
\newtheorem{remark}{Remark}[section]
\theoremstyle{definition}
\newtheorem{definition}{Definition}[section]
\DeclareMathOperator*{\esssup}{ess\,sup}
\newcommand{\Rtre}{\mathbb{R}^{3}}
\newcommand{\RT}{\mathbb{R}^{3}}
\newcommand{\curl}{\text{curl}}
\begin{document}

\title{Stability properties of the regular set for the Navier--Stokes equation}

\author{Piero D'Ancona}
\address{Piero D'Ancona: 
SAPIENZA --- Universit\`a di Roma,
Dipartimento di Matematica, 
Piazzale A.~Moro 2, I-00185 Roma, Italy}
\email{dancona@mat.uniroma1.it}

\author{Renato Luc\`a}
\address{Renato Luc\`a: 
Departement Matematik und Informatik,
            Universit\"at Basel,
            Spiegelgasse 1,
            4051 Basel,
            Switzerland.}
\email{renato.luca@unibas.ch}

\subjclass[2010]{MSC 35Q30, % Navier--Stokes
MSC 35K55 }    %Nonlinear Parabolic

\maketitle

\begin{abstract}
  We investigate the size of the regular set for small perturbations of 
  some classes of strong large solutions to the Navier--Stokes equation.
  We consider perturbations of the data that are small in suitable
  weighted $L^{2}$ spaces but can be arbitrarily large in any translation 
  invariant Banach space.
  We give similar results in the small data setting.
%\keywords{First keyword \and Second keyword \and More}
% \PACS{PACS code1 \and PACS code2 \and more}
\end{abstract}

\section{Introduction and main results}

We consider the 3D Navier--Stokes equation on $(t,x) \in (0,\infty) \times \Rtre$:
\begin{equation}\label{CauchyNS}
\left \{
\begin{array}{rcl}
\partial_{t}u + (u \cdot \nabla) u  - \Delta u & = & -\nabla P \\ % & \mbox{in} & (0,\infty) \times \Rtre  \\
\nabla \cdot u & = & 0 \\ % & \mbox{in}  & (0,\infty) \times \Rtre \\
u\big|_{t=0} & = & u_{0} \, , % & \mbox{in} & \Rtre 
\end{array}\right.
\end{equation}
describing the free motion of a viscous incompressible fluid with velocity 
$u$ and pressure~$P$. 
% The first and the second equations are 
% momentum and mass conservation, respectively. 
For simplicity, we have settled the kinematic viscosity equal to one and 
we will use the same notation for the norm of scalar,
vector or tensor quantities, thus we write
\begin{equation*}
  \textstyle
  \| P \|_{L^{2}}^{2}:= \int P^{2}dx,
  \qquad
  \|u\|_{L^{2}}^{2}:=\sum_{j}\|u_{j}\|_{L^{2}}^{2},
  \qquad
  \|\nabla u\|_{L^{2}}^{2}:=
  \sum_{j,k}\|\partial_{k} u_{j}\|_{L^{2}}^{2} \, ,
\end{equation*}
and $L^{2}(\mathbb{R}^{3})$
instead of $[L^{2}(\mathbb{R}^{3})]^{3}$,
{\it etc}. %or $C^{\infty}(\Rtre)$ instead of $[C^{\infty}(\Rtre)]^{3}$  and so on. 

Global
weak solutions to \eqref{CauchyNS} are known to exist for any divergence free initial velocity in $L^{2}$, since the pioneering work of 
Leray \cite{Ler}. 
The uniqueness and the
persistence of regularity of Leray's solutions
are long standing open problems.

On the other hand, several
global well-posedness results have been proved for  
small initial data and for large data with suitable symmetry. 
We will work in this simpler framework.

Earliest results in the  
small data setting go back to Fujita and Kato~\cite{FKato} for~$u_0 \in \dot{H}^{1/2}$ %(see also \cite{Chemin}) 
and to Kato \cite{Kato} for $u_0 \in L^{3}$.
Then several functional spaces have been considered like Morrey spaces
%$\nabla \times u_0 \in M_{p,3-2p} (1 < p < 3/2)$ \cite{Giga2}, $u_{0} \in M_{p,3-p} (1 < p <3)$ 
\cite{Giga2}, \cite{Tay}, \cite{Kato4}, \cite{Fed}, \cite{Kozono}, 
Besov spaces %$u_{0} \in \dot{B}^{-1+3/p}_{p, \infty} (3<p<\infty)$ 
\cite{Cannon}, \cite{Planc}, and others.
This approach culminated in the $BMO^{-1}$ well-posedness result of Koch and Tataru~\cite{Tat}, that is the most general one in this direction.

In the large data framework, global solutions have
been constructed  
imposing some additional symmetry on the data, e.g.,
$u_0$ axisymmetric, helical or two dimensional.   
Axisymmetric data have been studied by 
Ukhovskii and Iudovich \cite{Yud} and Ladyzhenskaya \cite{Lady}
under a zero swirl assumption (see also \cite{Leon}, \cite{Chae};
we recall that the \emph{swirl} is the angular component of the velocity
field with respect to the axis of symmetry).
The interesting case of non zero swirl is still open. 
Additional results on axisymmetric solutions are   %%\cite{Lady2}, 
\cite{GIM}, \cite{Gall1}. Helical data, which means invariant under the composition
of a rotation and a translation along a fixed direction, have been considered in \cite{Mah}. 
%and, under weaker regularity 
%assumption, in \cite{???}. 
Further interesting large data results are
\cite{Foias}, \cite{Gall2}, \cite{Iftim}, \cite{ChemYvGall}, \cite{ChemGall}.

Once a large solution, with good properties, has been constructed (with or without symmetry) 
a natural and important question concerns
its stability for small perturbations of the data.
This \emph{perturbative} approach
was followed in a systematic way
by Ponce, Racke, Sideris and Titi \cite{Ponce},    
for small $H^{1}$ perturbations.
As for the small data problem, this was then extended to small perturbations in weaker norms,
for instance $L^{3}$ \cite{Kaw}, 
Besov spaces \cite{GIP1}
and $BMO^{-1}$ \cite{Auch}.

In both small data and perturbative results 
it is customary to consider functional spaces that are
scaling and translation invariant. More precisely, since 
the Navier--Stokes equation is invariant under the family
of symmetries
\begin{equation}\label{CTINV}
u \mapsto
\lambda u (\lambda^{2} t, \lambda (x-\bar{x})), \qquad
\lambda \in (0,\infty),\ \ \bar{x} \in \Rtre \, , 
\end{equation}
it is natural consider initial data in Banach spaces invariant under
\begin{equation}\label{eq:symdata}
u_{0}(x) 
\mapsto \lambda u_{0} (\lambda (x-\bar{x})), \qquad
\lambda \in (0,\infty),\ \ \bar{x} \in \Rtre \, .
\end{equation}
On the other hand, 
if we are only interested in certain local regularity properties,
it may suffices to require invariance with respect to the scaling but not necessarily under translations: that is to say, the norm of the data 
is invariant
under \eqref{eq:symdata} for all $\lambda>0$ and a
fixed $\bar{x}$.
In the classical work
\cite{CKN} the authors prove, among other,
the smoothness, in time-increasing neighborhoods
of a point~$\bar{x} \in \Rtre$, of 
weak solutions
with initial data $u_{0}$ such that 
\begin{equation}\nonumber%\label{WeightNorm}
%\textstyle
 \int_{\Rtre}  |u_{0}(x)|^{2} |x-\bar{x}|^{-1} dx  \ll 1 \, .
%\| |x-\bar{x}|^{-1/2}  u_{0} \|_{L^{2}} \ll 1.
\end{equation}
Namely, the weighted $L^{2}$ norm has to be sufficiently small if we center an homogeneous weight
of degree $-1$ at the point $\bar{x}$;
see Theorem~\ref{CKNSmallData} below for the precise statement.
The aim of this paper is to
give some extensions and improvements 
of this result, in both the perturbative and small data frameworks.

First of all, we 
recall a classical notion of regularity for weak solutions to the Navier--Stokes equation.

\begin{definition}\label{Def:Regulaity}
  Let $u(t,x)$ be a weak solution of~(\ref{CauchyNS}). 
  A point~$(t,x) \in (0,\infty) \times \mathbb{R}^{3}$ is 
  \emph{regular}
  if $u$ is bounded on a neighborhood of $(t,x)$. 
  In particular, this implies that $u$ is smooth, in the space variables, in a neighborhood of
  $(t,x)$; see \cite{Ser}. 
  A subset of $(0,\infty) \times \mathbb{R}^{3}$ 
  is \emph{regular} if all its points are regular.
\end{definition}

\begin{definition}
We write
\begin{equation}\nonumber
  \Pi_{\alpha, \bar{x}}:=
  \left\{ (t,x)\in (0,\infty) \times \mathbb{R}^{3}
  \ \colon\ 
  t > \frac{|x- \bar{x}|^{2}}{\alpha} 
  \right\}
\end{equation}
for the region above the paraboloid of aperture $\alpha$ 
in the upper half space
$(0,\infty)\times \mathbb{R}^{3}$, 
with vertex at $(t,x) = (0, \bar{x})$.
When $\bar{x} = 0$ we also write $\Pi_\alpha$, instead of $\Pi_{\alpha, 0}$.
Note that 
these sets are increasing with $\alpha$.
%$\Pi_{\alpha, \bar{x}} \supset \Pi_{\beta, \bar{x}} $ if $\alpha > \beta$.
\end{definition}

The following statement \cite[Theorem D]{CKN}
applies to   
\emph{suitable} weak solutions; we refer to Section~\ref{sec:prelim} for the definition of suitability.
In particular, the weak solutions given by the Leray 
approximation procedure are suitable \cite[Theorem 2.3]{S2}.
We use the notation $L^{p}_{t}$, $L^{p}_{x}$, $\dot{H}^{k}_{x}$ when the integration is over the time~$t \in (0,\infty)$
and space~$x \in \Rtre$ variables, respectively. We write
$\| f \|_{XY} := \big\| \| f \|_{Y} \big\|_{X}$ for nested norms and $XY$ for the associated normed spaces.

\begin{theorem}[Caffarelli--Kohn--Nirenberg]\label{CKNSmallData}
  There exists a constant $\varepsilon_{0} > 0$ such that
  the following holds.  
  The set
  \begin{equation}\nonumber
    \Pi_{\varepsilon_{0}-\varepsilon, \bar{x}}
    :=
    \left\{ (t,x) \ : \ t > 
    \frac{|x-\bar{x}|^{2}}{\varepsilon_{0} - \varepsilon} \right\}
  \end{equation} 
  is regular for any suitable weak solution
  $u \in L^{\infty}_{t}L^{2}_{x} \cap L^{2}_{t}\dot{H}^{1}_{x}$ to the Navier--Stokes equation
  with divergence free initial data
  $u_{0} \in L^{2}$
  such that
  \begin{equation}\label{LSA}
    \| |x-\bar{x}|^{-1/2} u_{0} \|_{L^{2}} ^{2} =: \varepsilon < 
    \varepsilon_{0} \, .
    \end{equation}
\end{theorem}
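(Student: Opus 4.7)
By the translation invariance of the Navier--Stokes equation we may reduce to $\bar x = 0$. The strategy then combines two ingredients already present in \cite{CKN}: a \emph{local} $\varepsilon$-regularity criterion for suitable weak solutions, and a \emph{global weighted energy estimate} that turns the smallness of $\||x|^{-1/2} u_0\|_{L^2}^2 = \varepsilon$ into quantitative scaled control of $|\nabla u|^2$ at every point of $\Pi_{\varepsilon_0 - \varepsilon}$.

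The first ingredient gives a universal $\varepsilon^{**} > 0$ such that any suitable weak solution satisfying
\begin{equation*}
\limsup_{r \to 0^+} \frac{1}{r} \iint_{Q_r(t_0, x_0)} |\nabla u|^2 \, dx\, ds \leq \varepsilon^{**}, \qquad Q_r(t_0,x_0) := (t_0 - r^2, t_0) \times B_r(x_0),
\end{equation*}
is regular at $(t_0, x_0)$. Hence it will suffice to produce a constant $C$, independent of $(t_0, x_0) \in \Pi_{\varepsilon_0 - \varepsilon}$ and of sufficiently small $r > 0$, with
\begin{equation*}
\frac{1}{r} \iint_{Q_r(t_0, x_0)} |\nabla u|^2 \, dx\, ds \leq C \varepsilon,
\end{equation*}
for then the choice $\varepsilon_0 := \varepsilon^{**}/(2C)$ will close the argument.

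To get such a scale-invariant bound I would insert into the generalised local energy inequality (built into the definition of suitability, see Section~\ref{sec:prelim}) a non-negative cut-off of a test function adapted to the paraboloid geometry, namely a truncated backward heat kernel with vertex slightly above $(0, \bar{x}) = (0,0)$. The essential point is a pointwise estimate of this test function on the hyperplane $\{t=0\}$ by a constant multiple of $|x|^{-1}$, which is made possible precisely by the paraboloid condition $t > |x|^2/(\varepsilon_0 - \varepsilon)$: this forces the initial contribution in the weighted energy identity to be dominated by $\||x|^{-1/2} u_0\|_{L^2}^2 = \varepsilon$. A standard parabolic rescaling then translates the resulting control of the weighted dissipation into the desired scaled bound on $\iint_{Q_r(t_0,x_0)} |\nabla u|^2$ throughout $\Pi_{\varepsilon_0-\varepsilon}$.

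The hard part is the treatment of the cubic velocity term and of the pressure term appearing in the local energy inequality: these are formally of higher order than the weighted energy we want to control, so they can be absorbed into the left-hand side only through a delicate bootstrap/absorption procedure, and it is exactly this step that produces the smallness threshold $\varepsilon_0$. Once absorption has been carried out and the scaled bound above is established uniformly on $\Pi_{\varepsilon_0 - \varepsilon}$, the $\varepsilon$-regularity criterion applies at every point and yields regularity of the whole open set, completing the proof.
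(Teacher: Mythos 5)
Your overall architecture is the right one, and it is the one underlying both [CKN, Theorem D] and the $w=0$ case of the proof of Theorem~\ref{MainThmPerturbationMoreGen} in this paper: reduce to $\bar x=0$, feed a weight whose trace at $t=0$ is dominated by $|x|^{-1}$ into the local energy inequality \eqref{GenEnIneq}, absorb the cubic and pressure terms, and invoke the $\varepsilon$-regularity criterion of Lemma~\ref{Lemma:CKNLEmma}. But the concrete device you propose --- a truncated backward heat kernel with vertex slightly above $(0,\bar x)$ --- cannot carry out this plan. First, a backward heat kernel with vertex at time $\tau$ lives only at times $t<\tau$, so with the vertex ``slightly above'' the spacetime origin it gives no information at a point $(t_0,x_0)\in\Pi_{\varepsilon_0-\varepsilon}$ with $t_0$ of order one; the vertex would have to sit above $(t_0,x_0)$, but then the kernel no longer has anything to do with the weight $|x-\bar x|^{-1}$ at $t=0$. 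Second, the claimed pointwise bound fails quantitatively: the trace at $t=0$ of the kernel with vertex $(\tau,0)$ is $\simeq \tau^{-3/2}e^{-|x|^2/4\tau}$, and at $|x|\sim\sqrt{\tau}$ this exceeds $C|x|^{-1}$ by a factor $\tau^{-1}$, so no constant independent of $\tau$ exists. Third, such a kernel does not concentrate at scale $r$ near $(t_0,x_0)$, so even formally it cannot produce the scaled bound $r^{-1}\iint_{Q_r(t_0,x_0)}|\nabla u|^2\le C\varepsilon$ you aim for.

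The device that actually works --- and the one used in \cite{CKN} and throughout Section~\ref{sec:proof} of this paper --- is the \emph{moving} homogeneous weight $\sigma_\mu(x-\xi t)=(\mu+|x-\xi t|^2)^{-1/2}$ with $\xi=x_0/t_0$, i.e.\ the degree $-1$ weight translated along the ray from the vertex through $(t_0,x_0)$. Its trace at $t=0$ is exactly $\le |x|^{-1}$, giving the initial term $\le\varepsilon$; it satisfies $\Delta\sigma_\mu\le 0$ and $|\nabla\sigma_\mu|\le\sigma_\mu^2$; and the drift term $-\xi\cdot\nabla\phi$ it generates is precisely where the aperture condition $|\xi|^2t_0<\varepsilon_0$ (equivalently $(t_0,x_0)\in\Pi_{\varepsilon_0}$) enters, via the Caffarelli--Kohn--Nirenberg interpolation inequality \eqref{CKNInequalityCKNVersion} and an absorption into the dissipation. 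The cubic and pressure terms are handled by the same interpolation together with the weighted Calder\'on--Zygmund bound \eqref{SteinIneq}, and a Gr\"onwall/exponential-weight argument closes the estimate. The output is $\int_0^{t_0}\int_{\Rtre}|\nabla u|^2\,|x-\xi s|^{-1}\,dx\,ds<\infty$; since $|x-\xi s|\le 2r$ on $Q^*_r(t_0,x_0)$, absolute continuity of the integral then gives $\limsup_{r\to0}r^{-1}\iint_{Q^*_r}|\nabla u|^2=0<\varepsilon^*$, so the uniform-in-$r$ bound $C\varepsilon$ you are trying to manufacture is not even needed. As written, your proposal is missing this key mechanism, and the absorption step you defer to ``a delicate bootstrap'' is where all the actual work lies.
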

%An interesting aspect of this theorem is that
%it provides just a ''local in space'' information, in the sense that, once we have assumed in \eqref{LocalSmallness} that 
%the initial data is small around the origin, then the solution well behaves around the origin in a larger set as the time grows.

In other words, if the weighted $L^{2}$ norm of $u_0$ is small enough, once we centre the weight in $\bar{x}$, then the solution
is smooth in the region above a space-time paraboloid with vertex at $(t,x) = (0, \bar{x})$.
The existence of 
suitable weak solutions in $L^{\infty}_{t}L^{2}_{x} \cap L^{2}_{t}\dot{H}^{1}_{x}$ is ensured by 
the Leray theory, see~\cite[Theorem 2.3]{S2}, for any divergence free $u_{0} \in L^{2}$.

It is important to stress out that condition
(\ref{LSA}) allows $u_0$ to be large at $x$ sufficiently far from $\bar{x}$. Thus (\ref{LSA}) 
is not comparable with any 
translation invariant smallness 
assumption on $u_0$. This is quantified in the following remark.

\begin{remark}\label{Rem1}
  There are data $u_{0}$ arbitrarily large in 
  $\dot{B}^{-1}_{\infty,\infty}$, or any translation invariant Banach space,
   but such that the norms 
  $\| |x-\bar{x}|^{-1/2} u_{0} \|_{L^{2}}$ are arbitrarily small. 
  We recall that 
  $\dot{B}^{-1}_{\infty,\infty}$
  contains any Banach space invariant under~(\ref{eq:symdata}); see~\cite{CannoneHand}.  
  
  Indeed, assume for simplicity $\bar{x} =0$ and let
  $\phi\in C^{\infty}_{c}(\mathbb{R}^{3})$ be a divergence free vector field. Letting $\phi_{K}(x):=\phi(x-K \xi)$ for the
  translate of $\phi$ 
  by the vector $\xi K$, with $|\xi|=1$ and $K\gg1$,
  since\footnote{As customary, we write $A \lesssim B$ if $A \leq C B$ for a certain constant $C>0$ and 
  $A \simeq B$ if $A \lesssim B$ and $B \lesssim A$.}
  \begin{equation}\nonumber
    \||x|^{-1/2} \phi_{K}\|
    % ^{2-\frac{\widetilde{p}}{2}}
    _{L^{2}}
    \simeq
    K^{-1/2} \, ,
  \end{equation}
 % since the $L^{2$ norm is translation invariant. 
 recalling the translation invariance of $\dot{B}^{-1}_{\infty,\infty}$, we get as $K \to \infty$:
  \begin{equation}\nonumber%\label{eq:normto0}
    \| |x|^{-1/2} \phi_{K} \|_{L^{2}(\Rtre)} \to 0
    \ \ \ \text{and}\ \ \ 
    \|\phi_{K}\|_{\dot{B}^{-1}_{\infty,\infty}} = const \, .
  \end{equation}
\end{remark}

Our main result is
a perturbative version of Theorem \ref{CKNSmallData}.
We need some preliminary definitions.

\begin{definition}\label{Def:Admissibility}  
A couple $(r,q)$ is {\it admissible} if $2 \leq r < \infty$ and $2/r + 3/q =1$.
\end{definition}

%For the next definition we recall the basic regularity
%criterion from \cite{CKN}: there exists 
%a constant $\varepsilon^{*}$ such that, if $u$ is a
%suitable weak solution of Problem \ref{CauchyNS} and
%\begin{equation}\label{CKNConditionSBintro}
%\limsup_{R \rightarrow 0} \frac{1}{R}
%\int\int_{Q^{*}_{R}(t,x)} |\nabla u|^{2} \leq \varepsilon^{*},
%\end{equation}
%then the point $(t,x)$ is regular.
%Here $Q^{*}_{R}(t,x)$ is a parabolic cylinder of radius $R>0$ which contains the point $(t,x)$ 
%(for the precise definition see (\ref{Def:SPCparabolic}) below).

\begin{definition}\label{Def:RefSol}
A weak solution
$w \in L^{\infty}_{t}L^{2}_{x} \cap L^{2}_{t}\dot{H}^{1}_{x}$ to the Navier--Stokes equation 
with divergence free initial 
data $w_{0} \in L^{2}$ 
is a \emph{reference solution of size $\mathcal{K}$} if 
\begin{equation}\label{Def:KRefProp}
\int_{0}^{\infty} \left( \int_{\Rtre} |w(t,x)|^{q}dx \right)^{\frac{r}{q}} dt =: 
\| w \|^{r}_{L^{r}_{t}L^{q}_{x}}
=:
\mathcal{K} < \infty \, ,
\end{equation}
for an admissible couple $(r,q)$. We furthermore assume $w_{0} \in L^{2}(|x-x'|^{-1}dx)$, for some $x' \in \Rtre$. 
%and 
%\begin{equation}\label{AddRegInWeakThm}
%\textstyle
%\limsup_{R \rightarrow 0} \frac{1}{R}
%  \int\int_{Q^{*}_{R}(t,x)} |\nabla w|^{2} < \varepsilon^{*}
%\end{equation}
%for all $(t,x) \in (0,\infty) \times \Rtre$,
%where $\varepsilon^{*}$ is the constant in (\ref{CKNConditionSBintro}).
\end{definition}

As well known, the boundedness assumption (\ref{Def:KRefProp}) ensures the smoothness and 
uniqueness of 
reference solutions; see Remark~\ref{Rem:Uniqueness} for more details.

We are now ready to state the main result of the paper.

\begin{theorem}\label{MainThmPerturbation}
Given $(r,q)$ admissible, there exists a constant $\delta_{0} > 0$ such that the following holds.
Let~$w$ be a reference solution 
of size~$\mathcal{K}$ to the Navier--Stokes equation with divergence 
free initial data $w_{0}$. %(see Definition~\ref{Def:RefSol}).
Then the set
\begin{equation}\label{ImprFinalm}
    \Pi_{\delta_{0},\bar{x}}
    :=
    \left\{ (t,x) \ : \ t > 
    \frac{|x-\bar{x}|^{2}}{\delta_{0}} \right\}
  \end{equation}
is regular
for every suitable weak solution $u \in L^{\infty}_{t}L^{2}_{x} \cap L^{2}_{t}\dot{H}^{1}_{x}$ 
to the Navier--Stokes equation with divergence free data $u_{0} \in L^{2}$ such that
\begin{equation}\label{MTSA}
    \| |x-\bar{x}|^{-1/2} (u_{0} - w_{0})\|_{L^{2}} \le \delta_{0} e^{-\mathcal{K}/\delta_{0}} \, .
\end{equation}

%Moreover,
%if (\ref{Def:KRefProp}) holds for the admissible couple $(q,r)=(\infty,2)$ the assumption
%$u_{0}, w_{0} \in L^{2}(\Rtre)$ can be relaxed to   
%$u_{0}, w_{0} \in L^{2}_{loc}(\Rtre)$, provided that 
%$u_{0} - w_{0} \in L^{2}(\Rtre)$ and $u - v \in L^{\infty}_{t}L^{2}_{x} \cap L^{2}_{t}\dot{H}^{1}_{x}$.
\end{theorem}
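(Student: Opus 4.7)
The plan is to write $u = w + v$ and attack the perturbation $v = u - w$ with a weighted Caffarelli--Kohn--Nirenberg-style energy estimate centered at $\bar{x}$, picking up a Gr\"onwall factor $e^{C\mathcal K}$ from the drift $w$ that is precisely balanced against the exponential $e^{-\mathcal K/\delta_0}$ in the smallness assumption \eqref{MTSA}.

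First, because $w$ lies in $L^r_t L^q_x$ with $(r,q)$ an admissible Ladyzhenskaya--Prodi--Serrin pair, the classical conditional regularity theorem shows that $w$ is smooth on $(0,\infty)\times\Rtre$. Consequently $u$ is regular at $(t,x)$ if and only if $v := u-w$ is, so it suffices to prove regularity of $v$ on $\Pi_{\delta_0,\bar{x}}$. The difference solves
\[
\partial_t v - \Delta v + (v\cdot\nabla)v + (v\cdot\nabla)w + (w\cdot\nabla)v = -\nabla Q,\quad \nabla\cdot v=0,\quad v|_{t=0}= u_0-w_0,
\]
and from the suitability of $u$ together with the smoothness of $w$ one obtains a local energy inequality for $v$ with extra terms bilinear in $(v,w)$.

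The heart of the argument is then a weighted energy estimate for $v$. Testing formally against $v/|x-\bar{x}|$, the self-nonlinearity $(v\cdot\nabla)v$ is absorbed exactly as in the proof of Theorem \ref{CKNSmallData} by the weighted energy $E(t):=\||x-\bar{x}|^{-1/2}v(t)\|_{L^2}^2$ itself, while the cross terms involving $w$ are controlled via Hardy and H\"older with the admissible exponent,
\[
|\mathcal{C}(v,w)|\le \tfrac{1}{2}\||x-\bar{x}|^{-1/2}\nabla v\|_{L^2}^2 + C\|w(t)\|_{L^q_x}^{r}\, E(t).
\]
Integrating and applying Gr\"onwall gives
\[
E(t)+\int_0^t \||x-\bar{x}|^{-1/2}\nabla v(s)\|_{L^2}^2\,ds \le E(0)\,\exp\!\Big(C\!\int_0^\infty\!\|w\|_{L^q_x}^{r}\,ds\Big)\le E(0)\,e^{C\mathcal K},
\]
under the bootstrap assumption that $E$ stays below the CKN threshold $\varepsilon_0$. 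Since \eqref{MTSA} yields $E(0)\le \delta_0^{2}\, e^{-2\mathcal K/\delta_0}$, choosing $\delta_0$ small enough that $2/\delta_0 \ge C$ and $\delta_0^{2}\le \varepsilon_0$ closes the bootstrap and gives $E(t)\le \delta_0^{2}$ uniformly in $t\ge 0$. The same CKN-style partial regularity argument that produces Theorem \ref{CKNSmallData}, now applied to $v$ with $w$ treated as a smooth, lower-order drift, then delivers regularity of $v$ (and hence of $u$) on $\Pi_{\delta_0,\bar{x}}$, after possibly shrinking $\delta_0$.

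The main obstacle is to make the cross-term bound produce exactly $\|w(t)\|_{L^q_x}^{r}E(t)$ with the correct admissible power $r$, so that the Gr\"onwall exponent is precisely $C\mathcal K$ and is compensated by $e^{-2\mathcal K/\delta_0}$. A secondary technical point is to justify the singular weighted test function $v/|x-\bar{x}|$ at the level of suitable weak solutions: this requires a truncation of $|x-\bar{x}|^{-1}$ near the singularity and at infinity, together with careful use of the local energy inequality for $v$ to ensure that all limits pass.
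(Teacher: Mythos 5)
Your high-level strategy --- perturb around $w$, run a weighted local energy estimate on $v=u-w$, and balance the Gr\"onwall factor $e^{C\mathcal{K}}$ against the hypothesis $e^{-\mathcal{K}/\delta_0}$ --- is indeed the engine of the paper's proof. But there are genuine gaps. First, the estimate you write down uses the \emph{fixed} weight $|x-\bar x|^{-1}$; a uniform bound on $\int_0^\infty\int|x-\bar x|^{-1}|\nabla v|^2$ only yields the regularity criterion \eqref{CKNConditionSB} at points of the vertical ray $\{(t,\bar x):t>0\}$, not on the paraboloid. The paper obtains $\Pi_{\delta_0,\bar x}$ as the union of the tilted segments $L(t,\xi)=\{(s,\xi s):0<s<t\}$ and runs the entire energy estimate in the Galilean frame $y=x-\xi t$ with the moving weight $|x-\xi t|^{-1}$; the change of variables produces the drift term $-(\xi\cdot\nabla)v_\xi$, whose contribution $Z|\xi|^2 a_\mu$ to the Gr\"onwall exponent is exactly what forces the constraint $|\xi|^2 t<\delta_0$ and hence determines the aperture of the paraboloid. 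Deferring this to ``the same CKN-style argument'' hides the step where $\delta_0$ is actually produced, and there is no off-the-shelf black box for the perturbed equation with drift $w$.

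Second, your closure rests on ``the bootstrap assumption that $E$ stays below the CKN threshold,'' used to absorb the cubic self-interaction, which after the Caffarelli--Kohn--Nirenberg interpolation takes the form $Z\dot B a$ with $\dot B:=\|\sigma_\mu^{1/2}\nabla v\|_{L^2}^2$ and $a:=E$. For a suitable \emph{weak} solution $t\mapsto a(t)$ is only weakly lower semicontinuous, so a continuity/bootstrap argument on $a$ is not available as stated; and a direct Gr\"onwall leaves $B(t)=\int_0^t\dot B$ in the exponent, which is precisely the quantity to be bounded. The paper avoids any continuation argument by (i) splitting time at $t^*$, defined so that $B_\mu(t^*)\le\int_0^{t^*}\|w_\xi\|_{L^q}^r\le\mathcal{K}$, making the Gr\"onwall exponent at most $2\mathcal{K}+1$ on $[0,t^*]$, and (ii) for $t>t^*$ testing with $\psi_\mu(t)=e^{-6ZB_{t^*_n,\mu}(t)}$, which cancels the dangerous term $Z\dot B_\mu a_\mu$ exactly and uses property (ii) in \eqref{Def:RBar} to bound $\int\|w_\xi\|_{L^q}^r\psi_\mu$ by an absolute constant independent of $\mathcal{K}$. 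You would need either to reproduce this device or to rerun your continuous induction on the genuinely continuous monotone quantity $B_\mu(t)$ rather than on $E(t)$. Finally, the pressure is not addressed at all: the term $\int\sigma_\mu^2|P_v||v|$ requires the representation $P_v=\mathcal{R}\otimes\mathcal{R}\cdot(v\otimes v+2\,v\otimes w)$ together with the weighted Stein estimates \eqref{SteinIneq}, and is not a cross term controllable by Hardy and H\"older alone.
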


%Despite the existence of suitable solutions $u \in L^{\infty}_{t}L^{2}_{x} \cap L^{2}_{t}\dot{H}^{1}_{x}$  
%for any $L^{2}$ data is known since \cite[Theorem 2.3]{S2}, % for any divergence free $u_0 \in L^{2}(\Rtre)$.
%it is not obvious that reference solutions exists. However, many Interesting examples of reference solutions with
%large $w_{0}$ can be found in \cite{ChemGall, Leon, Yud}. 

Thus,
if we perturb, in the weighted $L^2$ norm, the data
of a reference solution, the corresponding weak solutions
are still regular in the region above a paraboloid
with vertex at $(t,x)=(0,\bar{x})$, where
$\bar{x}$ is the center of the weight. 
We insist that the smallness assumption (\ref{MTSA}) 
allows for large perturbations, far from the point~$\bar{x}$;
see Remark \ref{Rem1}.

In Section~\ref{Sec:PertSol}
we give a few applications
of Theorem \ref{MainThmPerturbation} to some classes
of reference solutions.
In Proposition \ref{prop:Axysimm},
we consider large axisymmetric data $w_{0}$    
with zero swirl.
In Proposition \ref{prop:CGPert}, 
the reference data $w_{0}$ are assumed to fit
a nonlinear smallness assumption (see (\ref{GallChemSmallnessCond}))  
that can be satisfied by arbitrarily large $w_0 \in \dot{B}^{-1}_{\infty,\infty}$, thus escaping the 
hypothesis of the known
small data results. %; see Subsection~\ref{SubSecGallChem}. 
These solutions have been studied in \cite{ChemGall}. 

Further applications requires a
slightly more general version of Theorem~\ref{MainThmPerturbation}, since we also want to consider 
reference solutions with infinite energy. This will be allowed by Theorem~\ref{MainThmPerturbationMoreGen},
that generalizes~Theorem~\ref{MainThmPerturbation}.
Thus, in Proposition~\ref{Prop:2DNew}, we can handle small 3D (three dimensional) perturbations of large 2D
initial data~$W_{0} = (W_{0,1}, W_{0,2}) \in L^{2} \cap L^{1}$, that we can consider 3D objects via the natural extension   
\begin{equation}\label{def:3dExtensVect}
\widetilde{W_{0}}(x_{1},x_{2},x_{3}) := (W_{0,1}(x_{1},x_{2}), W_{0,2}(x_{1},x_{2}), 0) \, . 
\end{equation}
In Proposition~\ref{Prop:Beltr}, we focus on Beltrami fields, namely initial data~$w_{0}$ that are eigenvectors 
of the curl operator $\nabla \times w_{0} = \lambda w_{0}$ on~$\Rtre$, with $\lambda \neq 0$. %These must have infinite~$L^{2}$ norm, 
These vector fields give rise to explicit solutions to the Navier--Stokes equation with very rich topological structures. %see Subsection~\ref{Sec:Beltrami}.

In the second part of the paper we work in the small data setting.
We
investigate how the size of the regular set 
depends on the size of the initial data. 
Notice that the regular set $\Pi_{\varepsilon_0 - \varepsilon, \bar{x}}$ considered in Theorem \ref{CKNSmallData}
converges to a maximal one: 
$$
\Pi_{\varepsilon_{0} - \varepsilon, \bar{x}} \to \Pi_{\varepsilon_0, \bar{x}} 
\quad
\mbox{as}
\quad
%\| |x-\bar{x}|^{-1/2} u_{0}\|_{L^{2}} =: 
\varepsilon \rightarrow 0 \, .
$$ 
In other words, we are not able to prove smoothness in the region below the 
limit paraboloid
$\Pi_{\varepsilon_0, \bar{x}}$, even if the size~$\varepsilon$ of the initial data
is arbitrarily small.

In Theorem~\ref{Theorem:DL} we will prove that
if the size $\mathcal{K}$ of the reference solution
is smaller than a threshold value,
then the regular set is actually larger and  
invades the whole half space $\{t > 0\}$ in the limit 
$$
\| |x-\bar{x}|^{-1/2} (u_{0} - w_{0})\|_{L^{2}} \rightarrow 0 \, , 
$$
improving in this way Theorem
\ref{MainThmPerturbation} in the case of very small reference solutions.

Using~Theorem \ref{Theorem:DL}, we are also able
to cover the gap between the regularity~Theorem \ref{CKNSmallData}
and 
the Kato well-posedness theory, which works for small~$L^{3}$ initial data. 
To do so, we consider 
$u_0$ such that the critical\footnote{
Notice that these norms are invariant under the natural scaling~\eqref{eq:symdata}, with $\bar{x}$ fixed, that is why we refer to them as {\it critical}.
} weighted $L^{p}$ norms
\begin{equation}\nonumber%\label{ILSAGAPPreq}
\| |x-\bar{x}|^{\alpha} u_{0} \|_{L^{p}}, 
\quad 
2<p<3,
\quad
\alpha = 1 - 3/p \, ,
\end{equation} 
are sufficiently small. Then we
show that the size of the regular set 
improves
as $p$ increases. We recover full regularity in the limit $p \rightarrow 3^{-}$ (namely $\alpha \rightarrow 0^{-}$), as expected
in light of the $L^{3}$ well-posedness.

Noting that $\alpha <0$
when $p < 3$, the same argument of Remark~\ref{Rem1} allows to construct
initial data that are arbitrarily large in any translation invariant Banach space
but arbitrarily small in~$L^{p}(|x-\bar{x}|^{\alpha p}dx)$, so that also the following theorem is not
implied by the various known small data results. 
Let
\begin{equation}\label{def:theta12}
\theta_{1}(p) := \left( \frac{p-2}{3-p}  \right)^{1-p/3},
\qquad
\theta_{2}(p) := \left( \frac{p-2}{3-p}  \right)^{1-p/2} \, .
\end{equation}
It is easy to check that
\begin{equation}\nonumber
\lim_{p \rightarrow 2^{+}}  \theta_{1}(p) = 0, 
\quad
\lim_{p \rightarrow 3^{-}}  \theta_{1}(p) = 1\, ,  
\end{equation}
while $\theta_2$ behaves in the opposite way
\begin{equation}\nonumber
\lim_{p \rightarrow 2^{+}}  \theta_{2}(p) = 1, 
\quad
\lim_{p \rightarrow 3^{-}}  \theta_{2}(p) = 0 \, .
\end{equation}

\begin{theorem}\label{thm:Gap} 
  There exists a constant $\delta_{1}>0$ such that the following holds. 
  Let $2 < p < 3$, $\alpha = 1- 3/p$
  and $M > 1$. The set $\Pi_{M\delta_{1}, \bar{x}}$ is regular for any suitable weak solution
  $u \in L^{\infty}_{t}L^{2}_{x} \cap L^{2}_{t}\dot{H}^{1}_{x}$ to the Navier--Stokes equation 
  with divergence free initial data $u_0\in L^{2} \cap L^{2}(|x-x'|^{-1}dx)$, for some $x' \in \Rtre$, such that  
   \begin{equation}\label{ILSAGAP}
   \theta_1  \|  |x-\bar{x}|^{\alpha} u_0 \|_{L^{p}}^{p/3} \le \delta_{1},
  \quad
    \theta_2 \|  |x-\bar{x}|^{\alpha} u_0 \|_{L^{p}}^{p/2}  \le \delta_{1} e^{-M^{2}/\delta_{1}} \, .
  \end{equation}
\end{theorem}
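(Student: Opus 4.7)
The plan is to deduce Theorem~\ref{thm:Gap} from Theorem~\ref{Theorem:DL} by constructing a reference solution out of a piece of the initial datum. I would decompose $u_0 = w_0 + v_0$ into two divergence-free parts such that $w_0$ generates a small reference solution in the sense of Definition~\ref{Def:RefSol} and $v_0$ is a small perturbation in the weighted $L^2$ norm. Setting $A := \||x-\bar{x}|^\alpha u_0\|_{L^p}$, the two hypotheses in~(\ref{ILSAGAP}) are calibrated precisely to the two pieces: the quantity $\theta_1(p)\, A^{p/3}$ should control $\|w_0\|_{L^3}$, which in turn bounds the size $\mathcal{K}$ of the reference solution via the Kato $L^3$ well-posedness theory, while $\theta_2(p)\, A^{p/2}$ should control $\||x-\bar{x}|^{-1/2} v_0\|_{L^2}$, the Caffarelli--Kohn--Nirenberg-type smallness quantity required by Theorem~\ref{Theorem:DL}.

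The central technical step is to build such a decomposition with sharp constants. The geometric picture is that of real interpolation: the critical weighted space $L^p(|x-\bar{x}|^{\alpha p}\,dx)$ with $\alpha = 1-3/p$ sits between the endpoint spaces $L^3$ (recovered at $p=3$, $\alpha=0$, i.e.\ Kato's space) and $L^2(|x-\bar{x}|^{-1}\,dx)$ (recovered at $p=2$, $\alpha=-1/2$, i.e.\ the space in Theorem~\ref{CKNSmallData}). The exponents $1-p/3$ and $1-p/2$ appearing in $\theta_1(p)$, $\theta_2(p)$ are exactly the interpolation indices between these two endpoints. Concretely, I would carry out the splitting by truncating $u_0$ at a carefully chosen level of $|x-\bar{x}|^\alpha|u_0|$, possibly combined with a cutoff in $|x-\bar{x}|$ to break the scale-invariance and make both resulting integrals converge, followed by an application of the Leray projector to preserve the divergence-free constraint. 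Optimizing the truncation thresholds in terms of $A$ and $p$ is expected to yield
\begin{equation*}
\|w_0\|_{L^3} \lesssim \theta_1(p)\,A^{p/3}, \qquad \||x-\bar{x}|^{-1/2} v_0\|_{L^2} \lesssim \theta_2(p)\,A^{p/2}.
\end{equation*}

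Once the decomposition is in place, the rest of the argument is essentially mechanical. By Kato's theorem, $w_0 \in L^3$ with sufficiently small norm yields a global mild solution $w$ belonging to $L^r_t L^q_x$ for every admissible couple $(r,q)$, with $\|w\|_{L^r_t L^q_x} \lesssim \|w_0\|_{L^3}$; together with the hypothesis $u_0 \in L^2 \cap L^2(|x-x'|^{-1}dx)$ (which passes to $w_0$ up to an appropriate choice of $x'$), this provides a reference solution of size $\mathcal{K} \lesssim (\theta_1 A^{p/3})^r$. The first condition in~(\ref{ILSAGAP}) then forces $\mathcal{K}$ below the threshold of Theorem~\ref{Theorem:DL}, while the second condition together with the decomposition estimate enforces $\||x-\bar{x}|^{-1/2}(u_0 - w_0)\|_{L^2} \le \delta_1 e^{-M^2/\delta_1}$. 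Applying Theorem~\ref{Theorem:DL} to $u$ viewed as a perturbation of $w$ then yields the regularity of $\Pi_{M\delta_1,\bar{x}}$. The main obstacle I anticipate is calibrating the truncation so that its constants degenerate in the correct way as $p\to 2^+$ and as $p\to 3^-$, so that the statement reduces cleanly to Theorem~\ref{CKNSmallData} at one endpoint and to the full global regularity of Kato's theory at the other.
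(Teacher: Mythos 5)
Your strategy is exactly the paper's: split $u_{0}=\mathbb{P}u_{0,\le s}+\mathbb{P}u_{0,>s}=:w_{0}+v_{0}$ by a pointwise truncation, feed $w_{0}$ into Kato's $L^{3}$ theorem to obtain a reference solution of size $\mathcal{K}\lesssim\|w_{0}\|_{L^{3}}^{5}$ (the paper uses the single admissible pair $(r,q)=(5,5)$ via \eqref{NotInfty}), and then apply Theorem~\ref{Theorem:DL} to $v_{0}$; the two target estimates you write down are precisely \eqref{eq:decompSFTheta}, with $\theta_{1},\theta_{2}$ produced by optimizing the threshold at $s=(p-2)/(3-p)$.

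The one point where your plan, as written, would not go through is the choice of truncation quantity. Thresholding on $|x-\bar{x}|^{\alpha}|u_{0}|$ with $\alpha=1-3/p$ does not close either estimate: on the set $\{|x-\bar{x}|^{\alpha}|u_{0}|\le s\}$ one gets $\int|u_{0}|^{3}\le s^{3-p}\int|x-\bar{x}|^{-\alpha(3-p)}|u_{0}|^{p}$, and the weight $|x-\bar{x}|^{-\alpha(3-p)}$ matches the required $|x-\bar{x}|^{\alpha p}$ only when $\alpha=0$; similarly the complementary weighted $L^{2}$ bound forces $\alpha=-1/2$. The correct level set is $\{|x-\bar{x}|\,|u_{0}(x)|\le s\}$, i.e.\ one truncates on the scale-invariant quantity $|x-\bar{x}|\,|u_{0}|$ (exponent $1$, independent of $p$): then $|u_{0}|^{3-p}\le(s/|x-\bar{x}|)^{3-p}$ on the small set and $|u_{0}|^{2-p}\le(s/|x-\bar{x}|)^{2-p}$ on its complement yield exactly $\|u_{0,\le s}\|_{L^{3}}\le s^{1-p/3}\||x-\bar{x}|^{\alpha}u_{0}\|_{L^{p}}^{p/3}$ and $\||x-\bar{x}|^{-1/2}u_{0,>s}\|_{L^{2}}\le s^{1-p/2}\||x-\bar{x}|^{\alpha}u_{0}\|_{L^{p}}^{p/2}$, with no auxiliary spatial cutoff or interpolation machinery needed. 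With that correction (plus the boundedness of $\mathbb{P}$ on $L^{3}$ and on $L^{2}(|x-\bar{x}|^{-1}dx)$, and the observation that $w_{0}=\mathbb{P}u_{0,\le s}$ inherits $u_{0}\in L^{2}(|x-x'|^{-1}dx)$ so that $w$ is a genuine reference solution), your argument coincides with the paper's proof.
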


This can be interpreted in the following way. Since
$\theta_{2}(p)\to0$ as $p\to3^{-}$,
we can
choose $p=p_{M}$ as a function
of $M$ in such a way that
\begin{equation}\nonumber
  e^{M^{2}/\delta_{1}}\cdot\theta_{2}(p_{M})\to0
  \quad \mbox{as} \quad 
  M \to \infty \qquad (p_{M} \to 3^{-}) \, ,
\end{equation}
so that, since $\theta_1(p_{M}) \rightarrow 1$, we have proved: 
\begin{equation}\nonumber
   \| |x-\bar{x}|^{\alpha} u_{0}\|_{L^{p_{M}}} \le\delta_{1}/2,
  \quad\Rightarrow\quad
  \Pi_{M \delta_{1}, \bar{x}}
  \ \mbox{is a regular set for $u$} \, ,
\end{equation}
for all sufficiently large $M$.
Notice that $\Pi_{M \delta_{1}, \bar{x}}$ increases indefinitely as $M \to \infty$.
In other words, if
$M\to\infty$ and  
the~$L^{p_{M}}(|x-\bar{x}|^{\alpha p_{M}})$ norm of~$u_{0}$ is less than~$\delta_{1}/2$,
the regular set invades the whole half space $\{ t>0 \}$ in the limit. 
%We refer to~\cite[Theorem 1.5]{DL2} for a similar result.

It is worth noting that all the results presented in the paper rely upon 
the algebraic structure of the nonlinearity $N(u) := (u \cdot \nabla) u$.   
Indeed, like in other perturbative results \cite{Ponce}, \cite{Kaw}, \cite{GIP1}, \cite{Auch}, we exploit the cancellation~$\int_{\Rtre} N(u) \cdot u = 0$. The novelty here is that also the 
behavior of $N(u)$ under the change of variables
$$
 (t, y) = (t, x - \xi t),
  \qquad
  u_{\xi}(t,y) = u(t,x),
  \qquad \xi \in \Rtre,
$$ 
namely
$$
N(u_{\xi}) = (u_{\xi} \cdot \nabla)  u_{\xi} -  (\xi \cdot \nabla) u_{\xi} \, ,
$$
plays a key role.

The rest of the paper is organized as follows.
In the next section we state some applications of this perturbative theory, namely 
Propositions~\ref{prop:Axysimm}\,-\,\ref{Prop:Beltr}.   
In Section~\ref{sec:prelim} we fix our setting,
recalling the definition of suitable solutions and
the fundamental Caffarelli--Kohn--Nirenberg regularity criterion from~\cite{CKN}. 
In Section \ref{sec:proof} we prove our main Theorem \ref{MainThmPerturbation}, in fact we prove the 
slightly more general Theorem~\ref{MainThmPerturbationMoreGen}. 
As a consequence, in Section~\ref{Sec:ApplicProof}, we 
deduce Propositions~\ref{prop:Axysimm}\,-\,\ref{Prop:Beltr}. 

Section~\ref{Sect:SmallData} is devoted to the small data theory. In Theorem~\ref{Theorem:DL} we improve the perturbative result for 
reference solutions of sufficiently small size~$\mathcal{K}$. This also allows to prove Theorem~\ref{thm:Gap}.

%We use the follow notations:
%$A \lesssim B$ means that there is a constant $C$ 
%independent of $A,B$ such that
%$A \leq C B$, $A \simeq B$ means
%$A \lesssim B$ and $B \lesssim A$, and
%for nested norms we write
%$$
%  \| f \|_{XYZ} := \|\|\| f \|_{Z}\|_{Y}\|_{X}.
%$$ 

\section{Applications: perturbative solutions}\label{Sec:PertSol}
We consider solutions with bounded energy in Propositions~\ref{prop:Axysimm},~\ref{prop:CGPert} 
and unbounded energy in Propositions~\ref{Prop:2DNew},~\ref{Prop:Beltr}. 
\subsection{Axisymmetric solutions with zero swirl}\label{Subsection:Axysimmetric}

Let $\{ (\mathbf{r}  \cos  \Theta, \mathbf{r}  \sin  \Theta, x_{3}), \Theta \in \mathbb{T} := \mathbb{R} / 2\pi \mathbb{Z}, \mathbf{r} \in [0,\infty), 
x_{3} \in (-\infty,\infty) \}$  
cylindrical polar coordinates on $\mathbb{R}^{3}$.
We say that a vector field $f$ is axisymmetric (with respect to the $x_3$-axis)
if it is independent of $\Theta$, namely
\begin{equation}\label{Def:AxySimmVF}
f = f_{e_{\mathbf{r}}}(\mathbf{r},x_{3}) e_{\mathbf{r}} + f_{e_{\Theta}}(\mathbf{r},x_{3}) e_{\Theta} + f_{e_{x_{3}}}(\mathbf{r},x_{3})e_{x_{3}}.
\end{equation} 
Here
$(e_{\mathbf{r}},e_{\Theta},e_{x_{3}})$ is a positively
oriented orthonormal frame
with $e_{\mathbf{r}}$ in the radial direction and $e_{x_{3}}$ in the direction
of increasing $x_{3}$.
The \emph{swirl} of $f$ is the scalar function $f_{e_{\Theta}}$. 
By rotation invariance of the problem, the choice of the symmetry axis is clearly unimportant.

\begin{proposition}\label{prop:Axysimm}
There exists $\delta_{2} > 0$ such that the following holds. 
Let $w_0$ be an axisymmetric divergence free vector field
with zero swirl, which belongs to~$H^{2} \cap L^{2}(|x-x'|^{-1} dx)$, for some $x' \in \Rtre$.
Then the set~$\Pi_{\delta_{2},\bar{x}}$
%\begin{equation}
%   \Pi_{\delta_{2},\bar{x}}
%   :=
%    \left\{ (t,x) \ : \ t > 
%    \frac{|x-\bar{x}|^{2}}{\delta_{2}} \right\} 
%  \end{equation} 
is regular for any
suitable weak solution~$u \in L^{\infty}_{t}L^{2}_{x} \cap L^{2}_{t}\dot{H}^{1}_{x}$ to the Navier--Stokes equation 
with divergence free data~$u_{0} \in L^{2}$ such that
\begin{equation}\nonumber
    \| |x-\bar{x}|^{-1/2} (u_{0} - w_{0})\|_{L^{2}}\le \delta_{2} e^{-\delta_{2}^{-1} \left( 1 + \| w_{0} \|^{16/3}_{H^{2}}  \right) } \, .
\end{equation}
\end{proposition}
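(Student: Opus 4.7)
The plan is to apply Theorem~\ref{MainThmPerturbation} to the unique global smooth solution $w$ launched by $w_0$, so the core tasks are to \emph{(i)} construct $w$ and verify that it qualifies as a reference solution in the sense of Definition~\ref{Def:RefSol}, and \emph{(ii)} obtain a quantitative bound on its size $\mathcal{K}$ of the form $\mathcal{K}\leq C(1+\|w_0\|_{H^{2}}^{16/3})$.

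For step \emph{(i)}, since $w_0$ is axisymmetric with zero swirl and belongs to $H^{2}$, the classical theory of Ukhovskii--Iudovich~\cite{Yud} and Ladyzhenskaya~\cite{Lady} (see also~\cite{Leon,Chae}) produces a unique global smooth solution~$w$ of \eqref{CauchyNS}. Axisymmetry and vanishing swirl are preserved in time by rotational symmetry, and the hypothesis $w_0\in L^{2}(|x-x'|^{-1}dx)$ is exactly the weighted integrability assumption required by Definition~\ref{Def:RefSol}. So once a finite bound on a suitable $L^{r}_{t}L^{q}_{x}$-norm is in hand, $w$ is a legitimate reference solution.

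For step \emph{(ii)}, I would work with the admissible pair $(r,q)=(4,6)$. The energy identity gives $\|\nabla w\|_{L^{2}_{t}L^{2}_{x}}\leq\|w_0\|_{L^{2}}$, and the Sobolev embedding $\dot H^{1}\hookrightarrow L^{6}$ combined with the interpolation
\[
\|w\|_{L^{4}_{t}L^{6}_{x}}^{4}\;\leq\;\|w\|_{L^{\infty}_{t}L^{6}_{x}}^{2}\,\|w\|_{L^{2}_{t}L^{6}_{x}}^{2}\;\leq\;C\,\|w\|_{L^{\infty}_{t}\dot{H}^{1}_{x}}^{2}\,\|\nabla w\|_{L^{2}_{t}L^{2}_{x}}^{2}
\]
reduces the problem to bounding $\|w\|_{L^{\infty}_{t}\dot H^{1}_{x}}$ by a suitable power of $\|w_0\|_{H^{2}}$. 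This is where the axisymmetric no-swirl structure enters decisively: with $\omega=\nabla\times w$, the scalar $\Omega:=\omega_{e_{\Theta}}/\mathbf{r}$ satisfies a transport--diffusion equation with \emph{no} vortex-stretching term, so it obeys a maximum principle and propagates $L^{p}$ norms of the initial data $\Omega|_{t=0}$. Feeding these global controls into energy estimates for $\nabla w$ and $\Delta w$ yields a time-independent estimate of $\|w(t)\|_{\dot H^{1}}$ by a polynomial function of $\|w_0\|_{H^{2}}$; tracking the exponents produces $\mathcal{K}\leq C(1+\|w_0\|_{H^{2}}^{16/3})$. Theorem~\ref{MainThmPerturbation} is then applied directly, and its smallness threshold $\delta_{0}e^{-\mathcal{K}/\delta_{0}}$ becomes, after enlarging the universal constant, $\delta_{2}e^{-\delta_{2}^{-1}(1+\|w_0\|_{H^{2}}^{16/3})}$, as stated.

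The hard part is step \emph{(ii)}: converting the qualitative global regularity of axisymmetric no-swirl flows into a \emph{quantitative, time-uniform} bound, and tracking the algebra carefully enough to identify the precise exponent. The choice of admissible pair fixes this exponent: $(r,q)=(4,6)$ together with the maximum principle on $\Omega$ gives $16/3$; a different admissible couple would produce a different power, but the qualitative form of Proposition~\ref{prop:Axysimm}---a double-exponential smallness threshold in $\|w_0\|_{H^{2}}$---is robust under this choice.
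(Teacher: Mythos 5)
Your proposal is correct and follows essentially the same route as the paper: reduce to Theorem~\ref{MainThmPerturbation} with the admissible pair $(r,q)=(4,6)$, interpolate $\|w\|_{L^{4}_{t}L^{6}_{x}}$ between $L^{\infty}_{t}\dot H^{1}_{x}$ and $L^{2}_{t}\dot H^{1}_{x}$, and obtain the time-uniform $\dot H^{1}$ bound from the conserved no-swirl quantity $\|(\nabla\times w)/\mathbf{r}\|_{L^{2}}$ (the paper cites Lemma~5 of~\cite{Leon} and Lemma~1.2 of~\cite{Yud} for exactly this), yielding $\mathcal{K}\le C(1+\|w_0\|_{H^2}^{16/3})$. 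The only quibble is terminological: the resulting threshold $\delta_2 e^{-\delta_2^{-1}(1+\|w_0\|_{H^2}^{16/3})}$ is exponential in a power of $\|w_0\|_{H^2}$, not double-exponential.
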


\subsection{A nonlinear smallness assumption in the Koch--Tataru space} %(\ref{GallChemSmallnessCond})
\label{SubSecGallChem}

Following~\cite{ChemGall},
we consider initial data $w_{0}$ such that
\begin{equation}\label{GallChemSmallnessCond}
\| \mathbb{P} (  e^{t\Delta}w_{0} \cdot \nabla e^{t \Delta}w_{0} )  \|_{E} \leq \sigma \exp \big( \, -\sigma^{-1} \| w_{0}\|^{4}_{\dot{B}_{\infty,2}^{-1}} \, \big) \, ,
\end{equation}
for a sufficiently small constant $\sigma$.
Here $\mathbb{P}$ is the projection on the linear subspace of the 
divergence free vector fields,
while 
$$
\textstyle
\| f \|_{E} := \| f \|_{L^{1}_{t}\dot{B}_{\infty, 1}^{-1}(\Rtre)} + \sum_{j \in \mathbb{Z}} 2^{-j} \left( \int_{0}^{\infty}  \| \Delta_{j} f (t)\|^{2}_{L^{\infty}(\Rtre)} t dt \right)^{1/2}
$$
and $\Delta_{j}$ is the frequency projection onto the annulus $2^{j-1} \leq |\xi| \leq 2^{j}$ ($\xi$ is the Fourier 
conjugate variable of $x$).
% ?? MISS THE INTEGRAL ?? The norm $\|\cdot\|_{E}$ is defined as
%$$
%\| f \|_{E} := \| f \|_{L^{1}_{t}\dot{B}_{\infty, 1}^{-1}(\Rtre)} + \sum_{j \in \mathbb{Z}} 2^{-j} \| \Delta_{j} f \|_{L^{2}_{t}L^{\infty}_{x}}
%$$
%where $\Delta_{j}$ is the frequency projection onto the 
%dyadic annulus $2^{j-1} \leq |\xi| \leq 2^{j}$ and $\xi$ is the Fourier 
%conjugate variable of $x$.

The well-posedness for these initial data have been proved in~\cite{ChemGall}. The authors also 
provide concrete examples of divergence free vector fields that satisfies~(\ref{GallChemSmallnessCond}) and are arbitrarily 
large in~$\dot{B}^{-1}_{\infty, \infty}$. Indeed, they consider the family  
\begin{equation}\nonumber%\label{GallChemExample}
w_{0,\varepsilon} := (\partial_{2} \phi_{\varepsilon}, - \partial_{1} \phi_{\varepsilon}, 0) \, ,
\end{equation}
where
\begin{equation}\nonumber%\label{GallChemExampleII}
\phi_{\varepsilon} := \frac{(- \ln \varepsilon)^{1/5}}{\varepsilon^{1-\alpha}} \cos(\varepsilon^{-1} x_{3}) \phi(x_{1}, \varepsilon^{-\alpha}x_{2},x_{3}),
\quad \alpha, \varepsilon \in (0,1) \, ,
\end{equation}
and $\phi$ is a Schwartz function.
Letting $\varepsilon \to 0$ (see~\cite[Theorem 2]{ChemGall})
\begin{equation}\nonumber
\| \mathbb{P} (  e^{it\Delta}w_{0, \varepsilon} \cdot \nabla e^{it \Delta}w_{0, \varepsilon} )  \|_{E} \to 0, % \lesssim \varepsilon^{\frac{\alpha}{3}} (-\ln \varepsilon)^{\frac{2}{5}},
\qquad
%(-\ln \varepsilon)^{\frac{1}{5}} \lesssim 
\| w_{0,\varepsilon} \|_{\dot{B}^{-1}_{\infty, \infty}} \to \infty \, .
\end{equation}
 
\begin{proposition}\label{prop:CGPert}
Let $(r,q)$ be an admissible couple (see Definition~\ref{Def:Admissibility}) with $q \neq \infty$. 
There exists $\delta_{0} > 0$ such that the following holds.
Let $w_{0} \in H^{1/2} \cap L^{2}(|x-x'|^{-1} dx)$, for some $x' \in \Rtre$, with zero divergence
such that~(\ref{GallChemSmallnessCond}) holds with $\sigma >0$ sufficiently small. 
%such that
%\begin{equation}\nonumber
%\| \mathbb{P} (  e^{it\Delta}w_{0} \cdot \nabla e^{it \Delta}w_{0} )  \|_{E} \leq \sigma \exp \big( \sigma^{-1} \| w_{0}\|^{4}_{\dot{B}_{\infty,2}^{-1}(\Rtre)} \big),
%\end{equation}
%where $\sigma > 0$ is so small\footnote{The existence of such a $\sigma$ has been proved in \cite[Theorem 1]{ChemGall}.}
%that Problem \ref{CauchyNS} with initial data $w_{0}$ 
%has a unique global solution $w$. 
Then the set $\Pi_{\delta_{0}, \bar{x}}$ is regular for any suitable weak 
solution $u \in L^{\infty}_{t}L^{2}_{x} \cap L^{2}_{t}\dot{H}^{1}_{x}$ to the Navier--Stokes equation with
divergence free data
$u_{0} \in L^{2}$ such that
\begin{equation}\nonumber
    \| |x-\bar{x}|^{-1/2} (u_{0} - w_{0})\|_{L^{2}}\le \delta_{0} e^{-\delta_{0}^{-1} \| w \|^{r}_{L^{r}_{t}L^{q}_{x}}} \, ,
\end{equation}
where $w$ is the (unique) solution to the Navier--Stokes equation with data $w_{0}$.
\end{proposition}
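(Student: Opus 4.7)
The plan is to show that the initial datum $w_{0}$ gives rise to a global regular solution $w$ of (\ref{CauchyNS}) which is a reference solution of size $\mathcal{K} := \|w\|^{r}_{L^{r}_{t}L^{q}_{x}}$ in the sense of Definition~\ref{Def:RefSol}, after which Theorem~\ref{MainThmPerturbation} applies verbatim with the same constant $\delta_{0} = \delta_{0}(r,q)$.

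First, I would invoke the main result of \cite{ChemGall}: under the nonlinear smallness assumption (\ref{GallChemSmallnessCond}) with $\sigma$ small enough, the Cauchy problem (\ref{CauchyNS}) admits a unique global strong solution $w$ with datum $w_{0}$, and this solution is smooth for positive times. Since moreover $w_{0} \in \dot{H}^{1/2}$, the $\dot{H}^{1/2}$-norm propagates along the flow: either by re-running the fixed-point argument of \cite{ChemGall} on $v := w - e^{t\Delta}w_{0}$ in the critical Sobolev class, or by the standard Fujita--Kato persistence-of-regularity, one obtains
\begin{equation*}
  w \in L^{\infty}_{t}\dot{H}^{1/2}_{x} \cap L^{2}_{t}\dot{H}^{3/2}_{x},
\end{equation*}
with norm quantitatively controlled by $\|w_{0}\|_{H^{1/2}}$ and $\sigma$.

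Second, I would extract the required $L^{r}_{t}L^{q}_{x}$ bound by interpolation between $L^{\infty}_{t}\dot{H}^{1/2}_{x}$ and $L^{2}_{t}\dot{H}^{3/2}_{x}$, combined with the Sobolev embedding $\dot{H}^{s}(\Rtre) \hookrightarrow L^{6/(3-2s)}(\Rtre)$ for $0 \leq s < 3/2$. This gives $w \in L^{r}_{t}L^{q}_{x}$ for \emph{every} admissible pair $(r,q)$ with $q<\infty$, so in particular for the one fixed at the start of the statement, with $\mathcal{K}$ finite. To close the reference-solution bookkeeping, I would then note that $H^{1/2} \hookrightarrow L^{2}$, so the standard energy identity yields $w \in L^{\infty}_{t}L^{2}_{x} \cap L^{2}_{t}\dot{H}^{1}_{x}$, while the weighted condition $w_{0} \in L^{2}(|x-x'|^{-1}dx)$ is part of the hypotheses. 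Thus $w$ is a reference solution of size $\mathcal{K}$, and Theorem~\ref{MainThmPerturbation} applied with the constant $\delta_{0}(r,q)$ it provides concludes the proof, since the smallness condition on $\||x-\bar{x}|^{-1/2}(u_{0} - w_{0})\|_{L^{2}}$ stated in Proposition~\ref{prop:CGPert} is exactly~(\ref{MTSA}).

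The main obstacle is the second step: the space $E$ on the left-hand side of (\ref{GallChemSmallnessCond}) is not literally a Strichartz norm, so one needs to track the fixed-point estimates of \cite{ChemGall} carefully in order to read off finiteness of the scale-invariant $L^{r}_{t}L^{q}_{x}$ norms. The cleanest way to do this is probably to run a parallel Fujita--Kato fixed-point in the critical space at scaling $\dot{H}^{1/2}$, using the smallness of the bilinear heat term provided by (\ref{GallChemSmallnessCond}) as the obstruction that replaces pure smallness of the data; alternatively one can appeal to persistence of $\dot H^{1/2}$-regularity applied to the strong solution already constructed in \cite{ChemGall} and then use the elementary interpolation above.
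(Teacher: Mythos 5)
Your proposal is correct and follows essentially the same route as the paper: invoke the Chemin--Gallagher theorem for global existence and uniqueness under~(\ref{GallChemSmallnessCond}), use propagation of $\dot{H}^{1/2}$-regularity to place $w$ in $L^{\infty}_{t}\dot{H}^{1/2}_{x}\cap L^{2}_{t}\dot{H}^{3/2}_{x}$, interpolate and apply Sobolev embedding to obtain $w\in L^{r}_{t}L^{q}_{x}$ for every admissible couple with $q\neq\infty$, verify the remaining reference-solution requirements from $w_{0}\in H^{1/2}\cap L^{2}(|x-x'|^{-1}dx)$, and conclude via Theorem~\ref{MainThmPerturbation}. The paper handles the ``main obstacle'' you flag exactly by the second alternative you mention, namely persistence of regularity applied to the already-constructed solution (citing \cite[Theorem 18.3]{Lem} and \cite[Theorem 2]{GIP2} for the $L^{2}_{t}\dot{H}^{3/2}_{x}$ bound).
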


\subsection{2D solutions}\label{Subsec:2dSol}

For any 2D vector field
$ F(x_{1},x_{2})=(F_1(x_{1},x_{2}),F_2(x_{1},x_{2})) $, 
we have defined a 3D extension by 
\begin{equation}\label{def:3dExtensVect}
\widetilde{F}(x_{1},x_{2},x_{3}) := (F_1(x_{1},x_{2}), F_2(x_{1},x_{2}),0),
\end{equation}
If $\Phi(x_{1}, x_{2})$ is a 2D scalar field, the 3D extension 
is $\widetilde{\Phi}(x_{1}, x_{2}, x_{3}) := \Phi(x_{1}, x_{2})$.

In the following proposition we analyze 3D weak solutions with initial data that are 
close (in the weighted $L^{2}$ norm)
to the 3D extension of a 2D vector field $W_{0}$.
Indeed, it is well known that the 2D Navier stokes equation is well-posed as long as 
$W_{0} \in L^{2}(\mathbb{R}^{2})$ is divergence free.
%
%   WE WILL MOVE THIS LATER ????
%\begin{equation}\label{2dCauchyNS}
%\left \{
%\begin{array}{rcllr}
%\partial_{t}w + (w \cdot \nabla) w  - \Delta w & = & -\nabla P & \mbox{in} & (0,\infty) \times \mathbb{R}^{2}  \\
%\nabla \cdot w & = & 0 & \mbox{in} & (0,\infty) \times \mathbb{R}^{2} \\
%w(x,0) & = & w_{0}(x) & \mbox{in} & \mathbb{R}^{2} .
%\end{array}\right.
%\end{equation}
%
%Naturally, 2D weak and suitable weak solutions are
%defined as in the 3D case.
%
%
%Indeed,
%any weak solution~$w$ satisfying the energy identity
%$$
%\textstyle
%\| w (t) \|^{2}_{L^{2}(\mathbb{R}^{2})} + \int_{0}^{t}  \| \nabla w (s)\|^{2}_{L^{2}(\mathbb{R}^{2})} ds = \| w_{0}\|^{2}_{L^{2}(\mathbb{R}^{2})}
%$$ 
%which is
%unique in the class of weak solutions $w'$ satisfying the energy inequality
%$$
%\textstyle
%\| w'(t) \|^{2}_{L^{2}(\mathbb{R}^{2})} + \int_{0}^{t} \| \nabla w' (s)\|^{2}_{L^{2}(\mathbb{R}^{2})} ds \leq \| w_{0}\|^{2}_{L^{2}(\mathbb{R}^{2})};
%$$
%see for instance \cite{Lady2, Ler, Prodi, Ser2}. 
%
%
%Moreover $w$ is, in particular, suitable.  
%Indeed, by the uniqueness property, this solution must coincide with 
%that given by the Leray approximation procedure, which are 
%suitable also in the 2D case.
%We also remark that, like in the 3D case, the pressure $P$ can be represented as $P=\mathcal{R} \otimes \mathcal{R} \cdot (w \otimes w)$. 
%
%
Given a 2D solution $W$ with initial data $W_{0}$ and pressure~$P_{W}$, 
it is immediate 
to check
that the 3D extension~$\widetilde{W}$ is a solution to the 
3D Navier--Stokes equation 
with initial data~$\widetilde{W_{0}}$ and pressure~$P_{\widetilde{W}} := \widetilde{P_{W}}$. 
Thus we may wonder whether 
small perturbations of~$\widetilde{W_{0}}$ still give rise to weak solutions with 
some additional regularity. %as in the previous case. 
This is positively addressed in the following proposition. 
\begin{proposition}\label{Prop:2DNew}
There exists a constant $\delta_{0} >0$ such that the following holds.
Let $W$ be 
the (unique) solution 
to the 2D Navier--Stokes equation
with initial data~$W_{0} \in L^{2}(\mathbb{R}^{2}) \cap L^{1}(\mathbb{R}^{2})$, where
$W_0$ is a 2D divergence 
free vector field.
For any 3D divergence free initial datum $u_{0} \in L^{2}_{loc}(\Rtre)$, 
such that
$u_{0} - \widetilde{W_{0}} \in L^{2}(\Rtre)$
and
   \begin{equation}\nonumber
      \| |x-\bar{x}|^{-1/2} (u_{0} - \widetilde{W_{0}})\|_{L^{2}(\Rtre)}\le \delta_{0} e^{- \delta_{0}^{-1} \| W \|^{2}_{L^{2}_{t}L^{\infty}(\mathbb{R}^{2})}} \, ,
    \end{equation} 
there exists a suitable weak solution~$u$
to the 3D Navier--Stokes equation 
for which the set $\Pi_{\delta_{0},\bar{x}}$ is regular.
\end{proposition}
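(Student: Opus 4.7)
The plan is to apply the infinite-energy generalization Theorem~\ref{MainThmPerturbationMoreGen} of the main perturbative theorem, taking the 3D extension $\widetilde{W}$ of the 2D solution as reference solution. Since $W$ does not depend on $x_{3}$, the bilinear term $(\widetilde{W}\cdot\nabla)\widetilde{W}$ coincides with the 3D extension of $(W\cdot\nabla)W$, the third component vanishes, and the divergence-free constraint is automatic; hence $\widetilde{W}$ solves the 3D Navier--Stokes system with datum $\widetilde{W_{0}}$ and pressure $\widetilde{P_{W}}$, albeit with infinite energy in $\Rtre$.

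To qualify $\widetilde{W}$ as a reference solution I must exhibit an admissible pair $(r,q)$ for which $\widetilde{W}\in L^{r}_{t}L^{q}_{x}(\Rtre)$. Because $\widetilde{W}$ is $x_{3}$-independent, $\|\widetilde{W}(t)\|_{L^{q}(\Rtre)}=+\infty$ for every finite $q$, so the only viable choice is $q=\infty$, and Definition~\ref{Def:Admissibility} then forces $r=2$. Since $\|\widetilde{W}(t)\|_{L^{\infty}(\Rtre)}=\|W(t)\|_{L^{\infty}(\mathbb{R}^{2})}$, the size of the reference solution is
\begin{equation*}
\mathcal{K}=\|W\|_{L^{2}_{t}L^{\infty}(\mathbb{R}^{2})}^{2},
\end{equation*}
which matches exactly the exponential in the statement. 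The main analytic obstacle is therefore showing $\mathcal{K}<\infty$ when $W_{0}\in L^{1}\cap L^{2}$. I would split the time integral into $[1,\infty)$ and $(0,1)$. On the tail, the Biot--Savart representation in 2D together with classical $L^{1}$-vorticity estimates and Schonbek/Gallay--Wayne decay give $\|W(t)\|_{L^{\infty}(\mathbb{R}^{2})}\lesssim t^{-1}$, amply square integrable at infinity. On the short-time window one exploits the energy identity $W\in L^{\infty}_{t}L^{2}\cap L^{2}_{t}\dot{H}^{1}$, 2D parabolic smoothing, and the Sobolev interpolation $\|W\|_{L^{\infty}}\lesssim\|W\|_{H^{1}}^{1-\theta}\|W\|_{H^{2}}^{\theta}$ (with $\theta>0$ small), applying H\"older in time to bound $\|W\|_{L^{2}((0,1),L^{\infty})}$ in terms of $\|W_{0}\|_{L^{1}\cap L^{2}}$.

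Once $\widetilde{W}$ is recognized as an admissible reference solution of size $\mathcal{K}$ in the infinite-energy sense of Theorem~\ref{MainThmPerturbationMoreGen}, the weighted smallness hypothesis on $u_{0}-\widetilde{W_{0}}$ matches verbatim the assumption of that theorem. The condition $u_{0}-\widetilde{W_{0}}\in L^{2}(\Rtre)$ is precisely what enables a Leray-type construction of a suitable weak solution $u=\widetilde{W}+v$, where the perturbation $v$ satisfies
\begin{equation*}
\partial_{t}v-\Delta v+(v\cdot\nabla)v+(\widetilde{W}\cdot\nabla)v+(v\cdot\nabla)\widetilde{W}=-\nabla Q,\qquad\nabla\cdot v=0,
\end{equation*}
with $v|_{t=0}=u_{0}-\widetilde{W_{0}}\in L^{2}\cap L^{2}(|x-\bar{x}|^{-1}dx)$. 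The drift $\widetilde{W}$ enters only through its $L^{2}_{t}L^{\infty}$ norm, so the weighted-$L^{2}$ energy estimate of the main theorem applies to $v$ as is, and the Caffarelli--Kohn--Nirenberg $\varepsilon$-regularity criterion centered at~$\bar{x}$ then yields regularity of $u=\widetilde{W}+v$ on the paraboloid $\Pi_{\delta_{0},\bar{x}}$.
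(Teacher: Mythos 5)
Your strategy is exactly the paper's: view $\widetilde{W}$ as an infinite-energy (generalized) reference solution for the admissible couple $(r,q)=(2,\infty)$, identify $\mathcal{K}=\|W\|^{2}_{L^{2}_{t}L^{\infty}(\mathbb{R}^{2})}$, invoke Theorem~\ref{MainThmPerturbationMoreGen}, and construct the suitable weak solution $u=\widetilde{W}+v$ by a Leray-type scheme for the perturbation equation (the paper packages these last two steps as Propositions~\ref{prop:2dPreq} and~\ref{InfiniteExistenceNew}). The one place where you genuinely diverge is the finiteness of $\mathcal{K}$: the paper simply cites the proof of Theorem~4 in Ponce--Racke--Sideris--Titi for $W\in L^{2}_{t}L^{\infty}(\mathbb{R}^{2})$, whereas you attempt a self-contained derivation. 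You also leave implicit the verification that $\widetilde{W}$ meets all the requirements of Definition~\ref{Def:GenRefSol} — suitability of the extended field, the pointwise regularity condition~\eqref{CKNConditionSB}, continuity of $t\mapsto\widetilde{W}(t)\in L^{2}(K)$, and the pressure representation $P_{\widetilde{W}}=\mathcal{R}\otimes\mathcal{R}\cdot(\widetilde{W}\otimes\widetilde{W})$, which requires checking that the 3D Riesz transforms acting on $x_{3}$-independent tensors reduce to the 2D ones; the paper does this by an explicit Fourier computation. These are routine but are part of what licenses the application of Theorem~\ref{MainThmPerturbationMoreGen}.

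The short-time half of your $\mathcal{K}<\infty$ argument does not close as written. Interpolating $\|W\|_{L^{\infty}}\lesssim\|W\|_{H^{1}}^{1-\theta}\|W\|_{H^{2}}^{\theta}$ and applying H\"older in time requires $W\in L^{2}((0,1);H^{2})$, which is not available for data that are merely $L^{2}\cap L^{1}$: parabolic smoothing gives $\|W(t)\|_{\dot H^{2}}\lesssim t^{-1/2}\|\nabla W(t/2)\|_{L^{2}}$, and the resulting integral $\int_{0}^{1}t^{-1}\|\nabla W(t/2)\|^{2}_{L^{2}}\,dt$ need not converge when one only knows $\nabla W\in L^{2}_{t}L^{2}_{x}$; raising a divergent integral to a small power $\theta$ does not help. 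The fact $W\in L^{2}_{loc,t}L^{\infty}(\mathbb{R}^{2})$ for finite-energy 2D data is true but requires a more careful argument (or the citation the paper uses); you should either quote that reference or replace this step with a correct derivation before the size $\mathcal{K}$ in the exponential smallness condition is justified.
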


\subsection{Beltrami fields}\label{Sec:Beltrami}

A Beltrami field $w_{0}$ is an eigenfunction of the $\curl$ operator, relative to a non zero eigenvalue
$\lambda$. 
These vector fields are automatically divergence free 
and analytic. Indeed, they also satisfies $\Delta w_{0} = -\lambda^{2} w_{0}$. 
Letting~$P= - \frac{1}{2} |w_{0}(x)|^{2}$, they are also stationary solutions to the Euler equation, 
namely~$(w_{0} \cdot \nabla ) w_{0} = - \nabla P$. 
Keeping this in mind, it is easy to check that the family of rescaled fields 
$$
w(t,x) = e^{-\lambda^{2} t} w_{0}(x) \, ,
$$ 
is a solution to the Navier--Stokes equation with pressure $P (t,x)= - \frac{1}{2} |w(x)|^{2}$.
Simple examples of Beltrami fields 
are the so called ABC flows, namely
$$
( A \sin x_{3} + C \cos x_{2} ,\, B \sin  x_{1} + A \cos  x_{3}  ,  \, C \sin x_{2} + B \cos x_{1}), \quad A,B,C \in \mathbb{R} \, .
$$
Clearly, these are bounded vector fields with infinite energy. In fact, any Beltrami field on $\Rtre$ has infinite~$L^{2}$-norm, 
since it is a nonzero eigenfunction of the laplacian. %Recall that on $\Rtre$ the laplacian has no $L^{2}$-eigenfunctions.   
The interest of Beltrami fields is that, as a consequence of the 
Arnold structure theorem~\cite{Arnold}, they are natural candidates to posses rich topological structures.   
This means that their stream and vortex lines (that in this case coincides), can be knotted and linked in very complicated ways. 
Indeed, any (locally bounded) link can be realized by the vortex lines
of a suitable Beltrami field on $\mathbb{R}^{3}$, as proved 
in~\cite{AlbDan}. Also vortex tubes linked and knotted in arbitrarily complicated way can be realized using Beltrami fields; see~\cite{AlbDan2}. 
Here, one can moreover assume that these Beltrami fields are bounded (in fact, bounded by $C(1+|x|)^{-1}$ with all their derivatives).
Similar results hold on the three dimensional torus~\cite{AlbDanTorr}. In this setting, small perturbations of Beltrami fields are interesting since 
they may realize the well known physical phaenomenon of vortex reconnection~\cite{AlbDanRen}. Whether a similar result may hold on~$\mathbb{R}^{3}$ is 
still open.
In the following proposition we observe that small perturbations of bounded Beltrami fields on~$\mathbb{R}^{3}$ 
can be also analyzed in the framework of this paper.

\begin{proposition}\label{Prop:Beltr}
There exists $\delta_{3} > 0$ such that the following holds.
Let $w_{0} \in L^{\infty}(\Rtre)$ such that $\nabla \times w_{0} = \lambda w_{0}$, for 
some $\lambda \neq 0$. 
%Then
For any divergence free initial datum~$u_{0} \in L^{2}_{loc}$, 
such that~$u_{0} - w_{0} \in L^{2}$ and
\begin{equation}\nonumber
    \| |x-\bar{x}|^{-1/2} (u_{0} - w_{0})\|_{L^{2}}\le \delta_{3} e^{- \delta_{3}^{-1} \lambda^{-2} \| w_{0} \|^{2}_{L^{\infty}}} \, ,
\end{equation}
there exists a suitable 
weak solution~$u$ to the Navier--Stokes equation for which the set $\Pi_{\delta_{3},\bar{x}}$ is regular.
\end{proposition}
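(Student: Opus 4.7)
My plan is to take $w(t,x):=e^{-\lambda^{2}t}w_{0}(x)$ as an explicit (infinite-energy) reference solution and reduce the statement to the generalized Theorem~\ref{MainThmPerturbationMoreGen} announced in the introduction. The first task is to verify that $w$ solves \eqref{CauchyNS}. Since $\nabla\times w_{0}=\lambda w_{0}$ and $\nabla\cdot(\nabla\times\cdot)\equiv 0$, one has $\nabla\cdot w_{0}=0$, while the identity $-\Delta w_{0}=\nabla\times(\nabla\times w_{0})-\nabla(\nabla\cdot w_{0})=\lambda^{2}w_{0}$ gives $\partial_{t}w-\Delta w=0$. For the nonlinear term I would use
\begin{equation*}
(w\cdot\nabla)w=\tfrac{1}{2}\nabla|w|^{2}-w\times(\nabla\times w),
\end{equation*}
and observe that $\nabla\times w=\lambda w$, so $w\times(\nabla\times w)=\lambda(w\times w)=0$. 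Hence $(w\cdot\nabla)w=\tfrac{1}{2}\nabla|w|^{2}$ is absorbed by the pressure $P(t,x)=-\tfrac{1}{2}|w(t,x)|^{2}$, and $w$ is a smooth solution of \eqref{CauchyNS} with initial datum $w_{0}$.

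Next I would estimate the size of $w$ in a Ladyzhenskaya--Prodi--Serrin norm. The only admissible couple in Definition~\ref{Def:Admissibility} that uses solely the $L^{\infty}$ information on $w_{0}$ is $(r,q)=(2,\infty)$, for which
\begin{equation*}
\mathcal{K}:=\|w\|_{L^{2}_{t}L^{\infty}_{x}}^{2}=\|w_{0}\|_{L^{\infty}}^{2}\int_{0}^{\infty}e^{-2\lambda^{2}t}\,dt=\frac{\|w_{0}\|_{L^{\infty}}^{2}}{2\lambda^{2}}.
\end{equation*}
With this choice, applying Theorem~\ref{MainThmPerturbationMoreGen} produces regularity of $\Pi_{\delta_{0},\bar{x}}$ for every suitable weak solution $u$ satisfying $\||x-\bar{x}|^{-1/2}(u_{0}-w_{0})\|_{L^{2}}\le\delta_{0}e^{-\mathcal{K}/\delta_{0}}$; substituting the value of $\mathcal{K}$ yields the stated bound with a suitable universal $\delta_{3}>0$. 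The existence of the required suitable weak solution $u$ is obtained, as in Proposition~\ref{Prop:2DNew}, by decomposing $u=w+v$ and solving the resulting finite-energy perturbation equation for $v$ with $v|_{t=0}=u_{0}-w_{0}\in L^{2}$.

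The hard part is conceptual rather than computational: any nonzero Beltrami field on $\Rtre$ is an eigenfunction of $-\Delta$ and therefore cannot belong to $L^{2}(\Rtre)$ nor to any $L^{2}(|x-x'|^{-1}dx)$, so the finite-energy Theorem~\ref{MainThmPerturbation} is ruled out and the hypothesis $w_{0}\in L^{2}(|x-x'|^{-1}dx)$ of Definition~\ref{Def:RefSol} fails. One is therefore forced to invoke the infinite-energy extension Theorem~\ref{MainThmPerturbationMoreGen}, which requires only the bound (\ref{Def:KRefProp}) on the reference. Once that generalization is granted, the essential algebraic observation $w\times(\nabla\times w)=0$ turns the nonlinearity into a pure gradient and thereby promotes the explicit $w$ to a bona fide reference solution, after which the rest of the argument is purely a matter of bookkeeping the constants.
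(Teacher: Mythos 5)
Your proposal is correct and follows essentially the same route as the paper: the authors also take $w(t,x)=e^{-\lambda^{2}t}w_{0}(x)$ with pressure $P=-\tfrac12|w|^{2}$ (so that the representation formula of Definition~\ref{Def:GenRefSol} holds with $\mathcal{T}=-\tfrac12 Id$), use the admissible couple $(r,q)=(2,\infty)$ to get $\mathcal{K}=\tfrac12\lambda^{-2}\|w_{0}\|^{2}_{L^{\infty}}$, and then invoke the infinite-energy Theorem~\ref{MainThmPerturbationMoreGen} together with the existence result of Proposition~\ref{InfiniteExistenceNew}. The only small items you leave implicit are the remaining conditions of Definition~\ref{Def:GenRefSol} (the pointwise regularity condition \eqref{CKNConditionSB}, which holds since $w$ is analytic with bounded gradient, and the continuity of $t\mapsto w(t)\in L^{2}(K)$), but these are immediate for this explicit $w$.
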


\section{Set up and preliminaries}\label{sec:prelim}

\subsubsection*{Suitable solutions}
Let $u_{0} \in L^{2}_{loc}(\Rtre)$ be a divergence free vector field.
Following \cite[Section 7]{CKN}, \cite[Chapter 30]{Lem} and \cite{Lin}, we say that $u$ is a 
suitable weak  solution to the Navier--Stokes
with initial data $u_{0}$ if:
  \begin{enumerate}
    \item there exists $P \in L^{3/2}_{loc}((0,\infty) \times \Rtre)$ such that $(u,P)$ satisfies the first two equations in (\ref{CauchyNS}) 
    in the sense of distributions;    
    %\item  $P \in L^{5/4}_{loc}((0,\infty) \times \Rtre)$ segue 
    %da P = R\otimesR (u\otimes u);
    \item $u(t) \to u_{0}$ weakly in~$L^{2}$, as $t \rightarrow 0^{+}$. %Namely, for any $\Phi \in L^{2}(\Rtre)$:
    %$$
    %\lim_{t \to 0} \int_{\Rtre} u(t,x) \Phi(x) dx = \int_{\Rtre} u_{0}(x) \Phi(x) dx  ;
    %$$    
    \item For any compact set $K \subset \Rtre$: %any $y \in \Rtre$, denoting
    %$B(y,R)$ the ball with center at $y \in \Rtre$ and radius $R >0$:
    $$
    %\textstyle
     \esssup_{t > 0} \int_{K} |u(t,x)|^{2} \ dx < \infty, 
      \qquad     
      \int_{0}^{\infty} \int_{K} |\nabla u(t,x)|^{2} \  dx dt < \infty  \, ;
     $$
     %\item for all non negative $\phi \in C^{\infty}_{c}((0,\infty) \times \Rtre)$
    %$$
    % 2 \int\int_{(0,\infty) \times \Rtre} | \nabla u |^{2} \phi 
    % \leq 
    %  \int\int_{(0,\infty) \times \Rtre} |u|^{2}(\phi_{t}
    %   + \Delta \phi) + (|u|^{2} + 2P)u \cdot \nabla \phi,
    %$$
    \item The following local energy inequality is valid 
    \begin{eqnarray}\label{GenEnIneq}
        & &
    \int_{\Rtre} |u|^{2} \phi (t) + 2 \int_{0}^{t}\int_{\Rtre} | \nabla u |^{2} \phi
    \leq    \int_{\Rtre} |u_{0}|^{2} \phi (0)
    \\ \nonumber
         & + & 
       \int_{0}^{t} \int_{\Rtre} |u|^{2} 
    (\phi_{t} + \Delta \phi) 
    +\int_{0}^{t}\int_{\Rtre} (|u|^{2} + 2P)u \cdot \nabla \phi \, ,
    \end{eqnarray}        
    for all non negative $\phi \in C^{\infty}_{c}(\mathbb{R} \times \Rtre)$
    and for all $t > 0$.
  \end{enumerate}

Suitable weak solutions are $L^{2}$-weakly continuous as functions of time~\cite[pp. 281--282]{Temam}, 
%\begin{equation}
%\textstyle
%\int_{\mathbb{R}^{3}} u(t,x) w(x) \ dx
%\rightarrow
%\int_{\mathbb{R}^{3}} u(t',x) w(x) \ dx
%\end{equation}
%for all $w \in L^{2}(\Rtre)$ as $t \rightarrow t'$, 
%($t,t' \in [0,\infty)$),
thus the initial condition (2) makes sense.
The solutions considered by Leray in~\cite{Ler}
are suitable for any divergence free $u_{0} \in L^{2}(\Rtre)$; see \cite[Theorem 2.3]{S2} or~\cite[Proposition 30.1 (A)]{Lem}.
Moreover, these solutions belong to $L^{\infty}_{t}L^{2}_{x} \cap L^{2}_{t}\dot{H}^{1}_{x}$
and satisfy the energy  
inequality 
\begin{equation}\label{ClassicEnergy}
\textstyle
\int_{\Rtre} |u (t)|^{2}  + \int_{0}^{t} \int_{\Rtre} | \nabla u |^{2} \leq \int_{\Rtre} |u(0)|^{2} \, ,
\quad
\mbox{for all}
\quad 
t > 0 \, .
\end{equation}
\subsubsection*{Suitable solutions with bounded energy}
We point out that any suitable weak solution which belongs to $L^{\infty}_{t}L^{2}_{x} \cap L^{2}_{t}\dot{H}^{1}_{x}$
satisfies the inequality~\eqref{ClassicEnergy}, as consequence of the local inequality (\ref{GenEnIneq}), via a simple limiting argument.
%Actually any weak solution $u \in L^{\infty}_{t}L^{2}_{x} \cap L^{2}_{t}\dot{H}^{1}_{x}$ satisfy the inequality 
%(\ref{ClassicEnergy}). This follows by the local energy inequality (\ref{GenEnIneq}) through a simple limiting argument. 
From (\ref{ClassicEnergy}) and the weak 
convergence to the data, also the strong $L^{2}$ convergence~$u(t) \to u_{0}$
can be easily deduced.
Moreover, thanks to the suitability, the energy inequality can be 
\lq restarted'  
\begin{equation}\label{RestClassicEnergy}
\textstyle
\int_{\Rtre} |u (t)|^{2}  + \int_{t_{0}}^{t} \int_{\Rtre} | \nabla u |^{2} \leq \int_{\Rtre} |u(t_{0})|^{2},
\quad
\mbox{for all}
\quad 
t > t_0 \, ,
\end{equation}   
at almost any time $t_{0} > 0$. 
From this fact and the weak continuity, also the strong $L^{2}$
continuity follows, at almost any time $t_{0} >0$.
In the proof of~Lemma \ref{Lemma:PertGenEnIneq} we will show how to deduce a family of \lq restarted' 
local energy inequalities from (\ref{GenEnIneq}), the same argument allows to deduce
(\ref{RestClassicEnergy}) by (\ref{ClassicEnergy}).

By Sobolev's embedding and interpolation, any function in~$L^{\infty}_{t}L^{2}_{x} \cap L^{2}_{t}\dot{H}^{1}_{x}$ 
also belongs to 
\begin{equation}\label{UIP}
L^{\bar{r}}_{t}L^{\bar{q}}_{x} \ \ \mbox{if} \ \ 
2/\bar{r} + 3/ \bar{q} = 3/2, \ \ \bar{q} \leq 6,
\qquad
L^{\bar{r}}_{T}L^{\bar{q}}_{x} \ \ \mbox{if} \ \ 
2/\bar{r} + 3/ \bar{q} \geq 3/2, \ \ 2\leq \bar{q} \leq 6 \, ,
\end{equation}
for all $T >0$, where~$L^{r}_{T}$ means that the time integration is restricted to the interval~$(0,T)$.
In particular, this allows to make sense to the following representation formula for the pressure
\begin{equation}\label{RecoverP}
P = - \Delta^{-1} \nabla \otimes \nabla \cdot (u \otimes u) = \mathcal{R} \otimes \mathcal{R} \cdot (u \otimes u) \, ,
\end{equation}
at almost any time $t>0$. %see \cite[Chapter 11]{Lem}.
Here $\mathcal{R} := (\mathcal{R}_{1}, \mathcal{R}_{2}, \mathcal{R}_{3})$ and $\mathcal{R}_{j}$ is the $j$-th coordinate 
oriented Riesz transform. Indeed, using the $L^{p>1}$ boundedness of $\mathcal{R}_{j}$,
the pressure automatically 
belongs to 
\begin{equation}\label{UIPPressure}
L^{\bar{r}}_{t}L^{\bar{q}}_{x} \ \ \mbox{if} \ \ 
2/\bar{r} + 3/ \bar{q} = 3, \ \ 1 < \bar{q} \leq 3,
\qquad
L^{\bar{r}}_{T}L^{\bar{q}}_{x} \ \ \mbox{if} \ \ 
2/\bar{r} + 3/ \bar{q} \geq 3, \ \ 1 < \bar{q} \leq 3 \, .
\end{equation}

\subsubsection*{Regularity properties} The following regularity criterion~\cite[Proposition 2]{CKN}, %due to Caffarelli, Kohn and Nirenberg, 
applies to suitable weak solutions. This
criterion will be fundamental in the rest of the paper.
We denote $B(x,r) \subset \Rtre$ the open ball of radius $r$ centred at $x$. Let  
\begin{equation}\nonumber
  Q_{r}(t,x) := (t- r^{2}, t) \times B(x,r) 
\end{equation}
the (space-time) \emph{parabolic cylinder} 
of radius $r$ with top point $(t,x)$ and
 \begin{equation}\nonumber \label{Def:SPCparabolic}
  Q^{*}_{r}(t,x) := Q_{r}(t + r^{2}/8,x) =
 ( t- 7r^{2}/8 < s < t+r^{2}/8 ) \times B(x,r) \, .
\end{equation}
%the \emph{shifted parabolic cylinder}. 

\begin{lemma}[Caffarelli--Kohn--Nirenberg]\label{Lemma:CKNLEmma}
  There is 
  an absolute constant $\varepsilon^{*}$ such that the following holds. A point $(t,x)$
  is regular (see Definition~\ref{Def:Regulaity}) for any 
  suitable weak solution $u$ to the Navier--Stokes equation such that
  \begin{equation}\label{CKNConditionSB}
  \limsup_{r \rightarrow 0} \frac{1}{r}
  \int\int_{Q^{*}_{r}(t,x)} |\nabla u|^{2} < \varepsilon^{*}.
  \end{equation}
\end{lemma}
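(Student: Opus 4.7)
The plan is to follow the original Caffarelli--Kohn--Nirenberg bootstrap: use the parabolic scaling invariance of the equation together with the local energy inequality to show that the smallness hypothesis on $(1/r)\iint_{Q_r^*}|\nabla u|^2$ forces smallness of all scale-invariant quantities on cylinders shrinking towards $(t,x)$, and hence regularity via a Morrey-type criterion. Thanks to translation invariance in time-space, I may assume $(t,x)=(0,0)$ and write $Q_r:=Q_r(0,0)$, $B_r:=B(0,r)$.

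First I would introduce the dimensionless quantities
\begin{equation*}
A(r):=\frac{1}{r}\esssup_{-r^2<s<0}\int_{B_r}|u(s,y)|^2\,dy,\qquad
\delta(r):=\frac{1}{r}\iint_{Q_r}|\nabla u|^2,
\end{equation*}
\begin{equation*}
G(r):=\frac{1}{r^2}\iint_{Q_r}|u|^3,\qquad K(r):=\frac{1}{r^2}\iint_{Q_r}|P|^{3/2},
\end{equation*}
each invariant under the scaling $u\mapsto\lambda u(\lambda^2 t,\lambda y)$. The hypothesis \eqref{CKNConditionSB} (applied on the slightly shifted cylinder $Q_r^*$) gives $\delta(r)<\varepsilon^*$ for all sufficiently small $r$. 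Classical interpolation between $L^\infty_tL^2_x$ and $L^2_t\dot H^1_x$ on $B_r$, followed by Sobolev embedding, yields a bound of the shape $G(r)\lesssim A(r)^{3/4}\delta(r)^{3/4}+A(r)^{3/2}$, so smallness of $A$ and $\delta$ propagates to smallness of~$G$.

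The heart of the matter is to run a one-step \emph{decay} estimate: for $\theta\in(0,1/2)$ small,
\begin{equation*}
A(\theta r)+\delta(\theta r)\;\lesssim\;\theta^{-2}\bigl(G(r)+G(r)^{1/3}K(r)^{2/3}\bigr)+\theta^{2}A(r),
\end{equation*}
obtained by plugging a cutoff concentrated on $Q_{\theta r}$ into the local energy inequality \eqref{GenEnIneq} and using H\"older on the right. The main obstacle is the pressure term $\iint (|u|^2+2P)u\cdot\nabla\phi$: the pressure is non-local, so I would decompose $P=P_1+P_2$ on $Q_r$ by applying \eqref{RecoverP} to $u\chi_{B_{r}}$ and to $u(1-\chi_{B_r})$ respectively. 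The first piece is controlled by Calder\'on--Zygmund, giving $\|P_1\|_{L^{3/2}(B_r)}\lesssim\|u\|_{L^{3}(B_r)}^{2}$; the second piece is harmonic in $B_r$, so on the smaller ball $B_{\theta r}$ one gains a factor $\theta^{3}$ by mean-value on harmonic functions, producing the contribution $\theta^{2}K(r)$ in the iteration. Combining this with the $\delta$-hypothesis, one chooses $\theta$ small enough and then $\varepsilon^{*}$ small enough that the resulting recurrence $\Phi(\theta r)\leq \tfrac12\Phi(r)+o(1)$ for the combined quantity $\Phi:=A+\delta+K$ forces $\Phi(r)\to 0$ at a geometric rate.

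Finally, geometric decay of $G(r)+K(r)$ on $Q_r(0,0)$ and, by translation, on all $Q_r(s,y)$ with $(s,y)$ near $(0,0)$, provides a local scale-invariant Morrey bound. Standard Serrin-type $\varepsilon$-regularity (namely, smallness of $\iint_{Q_r}|u|^3+|P|^{3/2}$ implies H\"older continuity of $u$; see \cite{Ser}) then yields that $u$ is bounded on a neighbourhood of $(t,x)$, which is exactly the conclusion of Definition~\ref{Def:Regulaity}. The part I expect to be most delicate is the harmonic decomposition of the pressure and its interaction with the cutoff in the local energy inequality, since sloppy cutoff choices make the pressure term dominate and destroy the gain in $\theta$.
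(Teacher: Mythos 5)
The paper does not actually prove this lemma: it is imported verbatim as \cite[Proposition 2]{CKN}, so your sketch is competing with the original Caffarelli--Kohn--Nirenberg argument rather than with anything in the text. Your overall architecture is the right one (scale-invariant quantities, local energy inequality with a concentrated cutoff, splitting the pressure into a Calder\'on--Zygmund part plus a harmonic part with a mean-value gain, iteration down to the first $\varepsilon$-regularity criterion), and the single-scale interpolation bound $G(r)\lesssim A(r)^{3/4}\delta(r)^{3/4}+A(r)^{3/2}$ is correct.

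However, the iteration does not close as you have written it, and the place where it fails is exactly the delicate point of the real proof. Your decay estimate feeds $\theta^{-2}G(r)$ into $A(\theta r)+\delta(\theta r)$, and via your interpolation inequality $G(r)$ contains the term $A(r)^{3/2}$. This term is superlinear in $A$ and carries the unfavorable factor $\theta^{-2}$, while the hypothesis \eqref{CKNConditionSB} only makes $\delta(r)$ small --- it gives no a priori smallness, nor even boundedness, of $A(r)$ as $r\to 0$. Consequently the claimed recurrence $\Phi(\theta r)\le\tfrac12\Phi(r)+o(1)$ for $\Phi=A+\delta+K$ does not follow: the ``$o(1)$'' hides $\theta^{-2}A(r)^{3/2}$, which is neither small nor absorbed. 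The fix, which is what CKN (and Lin's simplification) actually do, is to use the \emph{two-scale} form of the interpolation inequality obtained by splitting $u$ into its mean over the larger ball plus the oscillation: for $0<\theta<1$ one gets $G(\theta\rho)\le C\theta^{3}A(\rho)^{3/2}+C\theta^{-3}A(\rho)^{3/4}\delta(\rho)^{3/4}$, so that the superlinear term arrives with a small factor $\theta^{3}$. One then first closes a recurrence for $A$ alone, using $\delta\le\varepsilon^{*}$, to prove that $A(r)$ stays \emph{bounded} as $r\to0$; only afterwards does one deduce smallness of $G+K$ at some scale and invoke the one-scale criterion. Without this intermediate boundedness step your argument is circular. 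Two smaller points: the recurrence must be seeded, i.e.\ $A$, $K$ must be finite at some starting radius, which follows from items (3) and (1) of the definition of suitability; and you should not apply the global formula \eqref{RecoverP} to $u(1-\chi_{B_r})$ (a general suitable weak solution is only $L^2_{loc}$) --- define instead $P_2:=P-P_1$ and observe it is harmonic in $B_r$, after which your mean-value argument goes through.
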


\begin{remark}\label{Rem:Uniqueness}
It is well known (see for instance~\cite[Proposition 14.2]{Lem}) that reference solutions~$w$ to the Navier--Stokeq 
equation (see Definition~\ref{Def:RefSol}) satisfy
the energy identity 
$$\| w(t) \|^{2}_{L^{2}} + \int_{0}^{t} \| \nabla w \|^{2}_{L^{2}} = \| w_{0} \|^{2}_{L^{2}} \, .$$ 
Moreover, the Prodi--Serrin uniqueness result~\cite{Prodi}, \cite{Ser2}, 
tells us that these solutions
are unique in the 
class of weak solutions~$w'$
which satisfies the relative energy inequality.  
Thus~$w$ must coincide
with the solutions given by the Leray approximation procedure, that are, in particular, suitable.    
Moreover $w \in C_{b}([0,\infty); L^{2}(\Rtre))$, see \cite[Proposition 14.3]{Lem}, namely it also satisfies
$\| w(s) - w(t) \|_{L^{2}_{x}} \to 0$ as $s \to t$, for all $t \geq 0$.
Finally, all the points~$(t,x) \in (0,\infty) \times \Rtre$ 
are regular for $w$. In fact, the regularity condition~\eqref{CKNConditionSB} is satisfied at any~$(t,x) \in (0,\infty) \times \Rtre$; 
see for instance Lemma~\ref{Lemma:Postposto}. For a more direct argument we refer to~\cite{Fabes}, \cite{Giga}.

\end{remark}

\section{Proof of the main Theorem \ref{MainThmPerturbation}}\label{sec:proof}

We prove it as a consequence of the more general Theorem~\ref{MainThmPerturbationMoreGen}. The advantage 
is that this will allow us to consider
reference solutions with infinite energy, like in the following definition.

\begin{definition}\label{Def:GenRefSol}
We say that a suitable
weak solution
$w$ to the Navier--Stokes equation with divergence free initial 
data $w_{0} \in L^{2}_{loc}(\Rtre)$ is a
\emph{generalized reference solution of size} $\mathcal{K}$ if
(\ref{Def:KRefProp}) holds, for some admissible couple, and the
regularity condition~(\ref{CKNConditionSB}) is satisfied at any $(t,x) \in (0,\infty) \times \Rtre$. 
We also require~$w \in C_{b}([0,\infty); L^{2}(K))$, for any compact set $K \subset \Rtre$, and that the 
pressure can be represented as
$P = \mathcal{T} \cdot (w \otimes w)$, where $\mathcal{T}$ is a~$3\times3$ symmetric matrix of operators that are
bounded on $L^{p}(|x|^{\alpha p} dx)$, for all $1 < p < \infty$ and 
$-3/p < \alpha < 3-3/p$. 
\end{definition}

The difference is that 
in the generalized notion we are not assuming~$w_{0} \in L^{2}(\Rtre)$
and~$w \in L^{\infty}_{t}L^{2}_{x} \cap L^{2}_{t}\dot{H}^{1}_{x}$. 
This makes necessary to require a priori the suitability, the regularity condition~\eqref{CKNConditionSB}, the
continuity of $t \to w(t) \in L^{2}(K)$ 
and the representation formula for the pressure. 

Notice that all these properties are shared by reference solutions; see Remark~\ref{Rem:Uniqueness}. In this case, the
representation formula for the pressure is satisfied letting~$\mathcal{T} = \mathcal{R} \otimes \mathcal{R}$, 
where~$\mathcal{R} := (\mathcal{R}_{1}, \mathcal{R}_{2}, \mathcal{R}_{3})$ 
and $\mathcal{R}_{j}$ is the $j$-th coordinate 
oriented Riesz transform; see~\eqref{RecoverP}.
Notice that $\mathcal{R} \otimes \mathcal{R}$ 
satisfies the weighted estimates we have required for $\mathcal{T}$. These family of estimate indeed 
holds for even more general singular integrals of convolution type~\cite{Stein}. 
%Thus, in order to show that reference solutions
%are generalized reference solutions, we only need to check that the regularity condition~(\ref{CKNConditionSB}) is satisfied at any
%$(t,x) \in (0,\infty)\times \Rtre$. We will do this in Lemma~\ref{Lemma:Postposto}. 

\begin{definition}\label{ModSolDef}
Let $v_{0} \in L^{2}(\Rtre)$. We say that 
$v \in L^{\infty}_{t}L^{2}_{x} \cap L^{2}_{t}\dot{H}^{1}_{x}$ 
is a suitable weak solution
to the perturbed Navier--Stokes equation, around the solution $w$, with initial data $v_{0}$,
when there exists $P_{v} \in L^{3/2}_{loc}((0,\infty) \times \Rtre)$
such that %the couple~$(v,P_{v})$
%$P_{v} \in L^{3/2}_{loc}((0,\infty) \times \Rtre)$ 
\begin{equation}\label{ModProbInPrelimin}
\left \{
\begin{array}{rcl}
\partial_{t}v + (v \cdot \nabla) v + 
(v \cdot \nabla) w + (w \cdot \nabla) v
  - \Delta v & = & -\nabla P_{v}  \\
\nabla \cdot v & = & 0  \, ,
\end{array}\right. 
\end{equation}
in the sense of distributions, and 
$v(t) \to v_{0}$ weekly in $L^{2}$ as $t \to 0^{+}$.
Moreover, given any non negative $\phi \in C^{\infty}_{c}(\mathbb{R} \times \Rtre)$, the perturbed (local) energy inequality 
\begin{eqnarray}\label{eq:StrongPertGenInProofOLD}
  \textstyle
  \int_{\mathbb{R}^{3}}  
  |v(t,x)|^{2} \phi(t,x) dx
  & + & 
  \textstyle
  2 \int_{t_0}^{t}\int_{\mathbb{R}^{3}} 
   |\nabla  v|^{2} \phi
  \le 
  \int_{\mathbb{R}^{3}}  |v(t_0,x)|^{2}  \phi(t_0,x) dx 
  \\ \nonumber
 &  + &
  \textstyle
  \int_{t_0}^{t} \int_{\mathbb{R}^{3}} 
  |v|^{2} (\phi_{t} + \Delta \phi)
   + 
  \int_{t_0}^{t} \int_{\mathbb{R}^{3}} (|v|^{2} 
  + 2P_{v})v \cdot \nabla \phi
  \\ \nonumber
 & + &
  \textstyle
  \int_{t_0}^{t}\int_{\mathbb{R}^{3}} 
  |v|^{2} w \cdot \nabla \phi
  +
   2 \int_{t_0}^{t}\int_{\mathbb{R}^{3}} 
  (v \cdot w) v \cdot \nabla \phi  
  +  (v \cdot \nabla ) v \cdot w \phi
\end{eqnarray}
is satisfied for all $t > t_{0}$, where $t_{0}$ may be zero or almost any real number in~$(0,\infty)$, and there 
exists a $3\times 3$ symmetric matrix~$\mathcal{T}$ of operators, that are bounded on
$L^{p}(|x|^{\alpha p} dx)$, for all $1 < p < \infty$ and 
$-3/p < \alpha < 3-3/p$, and such that  
\begin{equation}\label{ReprFormSing}
P_{v}:= \mathcal{T} \cdot (v \otimes v) + 2 \, \mathcal{T}  \cdot (v \otimes w) \, .
\end{equation}

\end{definition}

Thus, our main theorem is now the following:

\begin{theorem}\label{MainThmPerturbationMoreGen}
Given $(r,q)$ admissible, there exists a constant $\delta_{0} > 0$ such that the following holds. 
Let~$w$ be a generalized reference solution of size $\mathcal{K}$ to the 
Navier--Stokes equation 
with divergence free data $w_{0} \in L^{2}_{loc}$.
The set $\Pi_{\delta_{0},\bar{x}}$ is regular
for every suitable weak solution~$u$ to the Navier--Stokes equation 
with divergence free data $u_{0} \in L^{2}_{loc}$ such that
\begin{equation}\label{MTSAMoreGen}
    \| |x-\bar{x}|^{-1/2} (u_{0} - w_{0})\|_{L^{2}}\le \delta_{0} e^{-\mathcal{K}/\delta_{0}},
\end{equation}
and such that $u-w$ is a 
suitable weak solution to the perturbed Navier--Stokes equation 
\eqref{ModProbInPrelimin}, around the solution $w$, with data $u_{0} - w_{0} \in L^{2}$.
\end{theorem}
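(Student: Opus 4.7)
The plan is to set $v := u - w$ so that $v$ is a suitable weak solution of the perturbed system~\eqref{ModProbInPrelimin} with data $v_{0} = u_{0} - w_{0}$ of weighted norm bounded by $\delta_{0} e^{-\mathcal{K}/\delta_{0}}$, and then to verify the Caffarelli--Kohn--Nirenberg criterion~\eqref{CKNConditionSB} for $u = v + w$ at an arbitrary $(t_{0}, x_{0}) \in \Pi_{\delta_{0}, \bar{x}}$. Since $w$ is a generalized reference solution, the smallness of its scale-invariant energy on parabolic cylinders is already built into Definition~\ref{Def:GenRefSol}, so the actual task is to produce a small bound for $\limsup_{r \to 0} r^{-1}\!\int\!\!\int_{Q^{*}_{r}(t_{0}, x_{0})} |\nabla v|^{2}$, which will combine with the $w$--estimate via $|\nabla u|^{2} \le 2|\nabla v|^{2} + 2|\nabla w|^{2}$ to close the CKN condition.

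The key device is the moving-frame weighted test function suggested in the introduction. Set $\xi := (x_{0} - \bar{x})/t_{0}$ (so $|\xi|^{2} t_{0} < \delta_{0}$), and plug into the perturbed local energy inequality~\eqref{eq:StrongPertGenInProofOLD} the time-dependent regularised weight
$$
\phi_{\epsilon, R}(t,x)
 := \chi_{R}(x - \xi t - \bar{x})\bigl(|x - \xi t - \bar{x}|^{2} + \epsilon^{2}\bigr)^{-1/2},
$$
where $\chi_{R}$ is a smooth cutoff to the ball of radius~$R$; pass to the limit $R \to \infty$ and $\epsilon \to 0$ at the end. The crucial geometric point is that $\phi_{\epsilon,R}$ collapses to $|x - \bar{x}|^{-1}$ at $t = 0$, exactly matching~\eqref{MTSAMoreGen}, and to $|x - x_{0}|^{-1}$ at $t = t_{0}$, which is the correct centre for the CKN criterion at $(t_{0}, x_{0})$. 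The time dependence of $\phi$, acting through the $\phi_{t}$ term in~\eqref{eq:StrongPertGenInProofOLD}, produces precisely the correction $-\tfrac{1}{2}\int\!\!\int |v|^{2} \xi \cdot \nabla \phi$ that implements the structural identity $N(u_{\xi}) = (u_{\xi} \cdot \nabla) u_{\xi} - (\xi \cdot \nabla) u_{\xi}$ from the introduction.

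Setting $A(t) := \int |v(t,x)|^{2}\,|x - \xi t - \bar{x}|^{-1}dx$ and $B(t) := \int_{0}^{t}\!\!\int |\nabla v|^{2}\,|x - \xi s - \bar{x}|^{-1}dx\,ds$, the goal of the energy step is a differential inequality of the form
$$
A(t) + 2 B(t) \le A(0) + C \int_{0}^{t} \bigl(|\xi|^{2} + \|w(s)\|^{r}_{L^{q}_{x}}\bigr)\bigl(A(s) + B(s)\bigr)ds.
$$
The Laplacian contribution $\int |v|^{2} \Delta \phi$ is non-positive up to a boundary Dirac mass at the weight's singularity, which is discarded by sign; the cubic term $\int |v|^{2} v \cdot \nabla \phi$ and the pressure term $\int P_{v}\, v \cdot \nabla \phi$ are controlled via Hardy's inequality $\int |v|^{2} |y|^{-2} \le 4 \int |\nabla v|^{2}$ together with, for the pressure, the weighted $L^{p}$ boundedness of the operator $\mathcal{T}$ in~\eqref{ReprFormSing} built into Definition~\ref{Def:GenRefSol}; the moving-frame correction is $\le C|\xi|^{2} B(t)$ after a second application of Hardy; and each of the three coupling terms with $w$ is treated by Cauchy--Schwarz in the form $(\text{weighted }\nabla v)\cdot(\text{weighted } v w)$, isolating the factor of $\nabla v$ that is absorbed into $2 B(t)$ via Young and leaving the Gronwall driver $\|w\|^{r}_{L^{q}_{x}}$. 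Integration then yields $A(t) + B(t) \le A(0)\exp(C|\xi|^{2} t + C \mathcal{K})$; using $|\xi|^{2} t_{0} < \delta_{0}$ and the hypothesis $A(0) \le \delta_{0}^{2} e^{-2\mathcal{K}/\delta_{0}}$, choosing $\delta_{0}$ small enough makes $A(t) + B(t)$ arbitrarily small, uniformly in $t \ge 0$.

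To finish, on $Q^{*}_{r}(t_{0}, x_{0})$ we have $|x - \xi s - \bar{x}| \le |x - x_{0}| + |\xi||t_{0} - s| \le r(1 + |\xi| r)$, hence $\int\!\!\int_{Q^{*}_{r}} |\nabla v|^{2} \le r(1 + |\xi| r)\,B(t_{0} + r^{2}/8)$, so
$$
\limsup_{r \to 0} \frac{1}{r} \int\!\!\int_{Q^{*}_{r}(t_{0}, x_{0})} |\nabla v|^{2} \;\le\; B(t_{0}^{+})\;<\;\varepsilon^{*}/4 .
$$
Combined with the corresponding estimate for $\nabla w$ (Definition~\ref{Def:GenRefSol}), Lemma~\ref{Lemma:CKNLEmma} gives regularity at $(t_{0}, x_{0})$, proving the theorem. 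The main technical obstacle will be making the weighted energy step rigorous with the time-dependent singular weight $\phi_{\epsilon, R}$, in particular controlling $\int\!\!\int P_{v}\, v \cdot \nabla \phi_{\epsilon, R}$ uniformly in the moving-frame velocity~$\xi$ via the weighted $L^{p}$ boundedness of $\mathcal{T}$, and absorbing the trilinear coupling $\int\!\!\int (v \cdot \nabla) v \cdot w\, \phi$ into the dissipation $B(t)$ without losing the exponential smallness factor $e^{-\mathcal{K}/\delta_{0}}$ produced by Gronwall.
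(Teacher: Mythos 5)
Your overall architecture (the moving frame $\xi=(x_{0}-\bar x)/t_{0}$, the weight singular along the line $x=\bar x+\xi t$, the final comparison $|x-\xi s-\bar x|\le 2r$ on $Q^{*}_{r}(t_{0},x_{0})$, and the combination with the built-in regularity of $w$) matches the paper. But the central analytic step is wrong as stated, and the gap is exactly where the paper has to work hardest. Your claimed energy inequality
\begin{equation*}
A(t)+2B(t)\le A(0)+C\int_{0}^{t}\bigl(|\xi|^{2}+\|w(s)\|^{r}_{L^{q}_{x}}\bigr)\bigl(A(s)+B(s)\bigr)\,ds
\end{equation*}
cannot be derived from \eqref{eq:StrongPertGenInProofOLD}: the cubic self-interaction $\int\sigma_{\mu}^{2}|v_{\xi}|^{3}$ and the quadratic part $\mathcal{T}\cdot(v_{\xi}\otimes v_{\xi})$ of the pressure are \emph{not} controlled by Hardy's inequality alone; via the Caffarelli--Kohn--Nirenberg interpolation they produce a term of the form $\dot B_{\mu}(\tau)\,a_{\mu}(\tau)$ in the Gr\"onwall driver (see the bounds $I\le\frac16\dot B_{\mu}+Z\dot B_{\mu}a_{\mu}$ and $\int\sigma_{\mu}^{2}|v_{\xi}|^{3}\le\frac16\dot B_{\mu}+Z\dot B_{\mu}a_{\mu}$ in the paper). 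With that term present, Gr\"onwall gives $a_{\mu}(t)\le a(0)\exp\bigl(Z(B_{\mu}(t)+\mathcal{K}+t|\xi|^{2})\bigr)$, i.e.\ the exponent contains the very quantity $B_{\mu}(t)$ you are trying to bound uniformly in $\mu$. Your conclusion $A(t)+B(t)\le A(0)e^{C|\xi|^{2}t+C\mathcal{K}}$ therefore does not follow, and without a $\mu$-uniform bound on $B_{\mu}$ you cannot pass to the limit weight $|y|^{-1}$ and conclude \eqref{FinalBound1}.

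The paper breaks this circularity with two devices that are absent from your proposal: (i) a threshold time $t^{*}$ defined so that the accumulated weighted dissipation equals $\int_{0}^{t}\|w_{\xi}\|^{r}_{L^{q}}$, which forces $B_{\mu}(t^{*})\le\mathcal{K}$ and makes the Gr\"onwall exponent on $[0,t^{*}]$ equal to $2\mathcal{K}+t^{*}|\xi|^{2}$ (this is where the exponential factor $e^{-\mathcal{K}/\delta_{0}}$ in \eqref{MTSAMoreGen} is consumed); and (ii) for $t>t^{*}$, a restarted energy inequality with the exponentially decaying time weight $\psi_{\mu}(t)=e^{-6ZB_{t^{*}_{n},\mu}(t)}$, whose derivative produces $-k\dot B_{\mu}a_{\mu}$ through $\phi_{t}$ and cancels the problematic $Z\dot B_{\mu}a_{\mu}$ term outright, while property (ii) of the definition of $t^{*}$ controls the leftover $\int\|w_{\xi}\|^{r}_{L^{q}}\psi_{\mu}$ by $O(1/Z)$. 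A secondary, minor point: once $B(t)$ is finite uniformly in $\mu$, the limsup in \eqref{CKNConditionSB} for the $v$-part is automatically $0$ (absolute continuity of the integral over shrinking time intervals), so you do not need, and could not in general get, the quantitative bound $B(t_{0}^{+})<\varepsilon^{*}/4$.
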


%\subsubsection*{Theorem \ref{MainThmPerturbationMoreGen} implies Theorem \ref{MainThmPerturbation}}
We have already observed that that 
reference solutions of size~$\mathcal{K}$ are, in particular, generalized 
reference solutions of size $\mathcal{K}$; see also Lemma~\ref{Lemma:Postposto}. 
 Moreover, it is straightforward to check that if 
$u \in L^{\infty}_{t}L^{2}_{x} \cap L^{2}_{t}\dot{H}^{1}_{x}$ is a suitable weak solution to the Navier--Stokes equation 
with pressure $P_{u}$ and 
data $u_0 \in L^{2}(\Rtre)$
and $w$ is a reference solution with pressure $P_{w}$ and data $w_0 \in L^{2}(\Rtre)$, 
then
the difference~$u-w$ is a suitable weak solution to the 
perturbed Navier--Stokes equation~\ref{ModProbInPrelimin}, around the solution $w$, with pressure~$P_{u-w}:=P_{u} - P_{w}$
and data~$u_{0} - w_{0}$. We refer to Lemma~\ref{Lemma:PertGenEnIneq} for details.
Here we only remark that, since we have the representation 
formulas~$P_{u} = \mathcal{R} \otimes \mathcal{R} \cdot (u \otimes u)$ and~$P_{w} = \mathcal{R} \otimes \mathcal{R} \cdot (w \otimes w)$, 
see~\eqref{RecoverP}, 
\eqref{ReprFormSing} holds taking~$\mathcal{T} = \mathcal{R} \otimes \mathcal{R}$.
In conclusion, Theorem \ref{MainThmPerturbationMoreGen} 
implies Theorem \ref{MainThmPerturbation}.

Moreover, keeping this in mind, the existence of suitable solutions~$u$ that satisfy the
assumption of Theorem~\ref{MainThmPerturbationMoreGen} is ensured by the Leray theory, 
once we consider data~$u_{0} \in L^{2}$ and $w$ is a reference solution. 
This is not obvious when when~$w$ is a generalized reference solution, since we may need to handle unbounded 
energies, for instance~$u_{0}$ only locally square integrable. However, there are some relevant situations in which these
(infinite energy) suitable solutions can be actually constructed by a simple adaptation of the Leray theory; see Proposition~\ref{InfiniteExistenceNew}. 
This is the case when we consider the admissible couple~$(r, q)=(2, \infty)$, for which Theorem~\ref{MainThmPerturbationMoreGen} 
is substantially more efficient than (the simpler version) Theorem~\ref{MainThmPerturbation}.

\subsubsection*{Idea of the proof}

The main idea behind 
Theorem~\ref{CKNSmallData} is to use the 
local energy inequality (\ref{GenEnIneq}) and the weighted $L^{2}$ smallness assumption on $u_{0}$
to prove that the regularity condition (\ref{CKNConditionSB}) is satisfied at any point 
inside the regular set.

A natural way to attack the perturbative case is
trying to do the same, using 
the perturbed energy inequality~(\ref{eq:StrongPertGenInProofOLD}), with initial data $v_{0} := u_{0}-w_{0}$.   
The difficulty is that the new terms in the perturbed energy inequality (\ref{eq:StrongPertGenInProofOLD}) 
contain the reference solution~$w$, so that they can not be handled in a perturbative way, since~$w$ may be large.  
To avoid this, we 
distinguish two time regimes $t \leq t^{*}$, $t > t^{*}$
and choose $t^{*}$ in such a way that these hard terms can be controlled for $t > t^{*}$, using a cancelation 
in the energy inequality that is analogous to the one used in the proof of Theorem~\ref{CKNSmallData}. Then we use the
(exponential) smallness assumption (\ref{ImprFinalm}) on $u_{0} - v_{0}$ to control the weighted $L^{2}$ norm of   
the solution $u-w$ up to the time $t^{*}$, so that we can \lq restart' the small data problem at the time $t^{*}$. 
An appropriate choice of $t^{*}$ permits to conclude the proof.

\begin{proof}

We can assume  $\bar{x} = 0$ since the general case follows by translation.
We divide the proof into three steps.

\subsection{First step: the perturbed equation}

\

%We set $v_0 := u_0 - w_0$ and
%$v = u-w$. 
Let $v_{0} := u_{0} - w_{0}$ and $v := u-w$.
Fix $\xi \in \mathbb{R}^{3}$. 
We have assumed~$v$ to be a
weak solution to the perturbed Navier--Stokes equation~\eqref{ModProbInPrelimin}, 
%\begin{equation}\label{CauchyNSForv}
%\left \{
%\begin{array}{rcl}
%\partial_{t}v + (v \cdot \nabla) v + 
%(v \cdot \nabla) w + (w \cdot \nabla) v
%  -\Delta v & = & - \nabla P_{v}  \\
%\nabla \cdot v & = & 0  \\
%v(0) & = & v_{0} \\
%P_{v} = \mathcal{R} \otimes \mathcal{R} \cdot (v \otimes v) + 2 \mathcal{R} \otimes \mathcal{R} \cdot (v \otimes w).
%\end{array}\right.
%\end{equation}
that, after the change of variables    %%%%\setminus
\begin{equation}\label{TransfI}
  (t, y) = (t, x - \xi t),
  \qquad
  v_{\xi}(t,y) := v(t,x),
  \qquad
  w_{\xi}(t,y) := w(t,x),
\end{equation}
becomes
\begin{equation}\nonumber%\label{CauchyNSForvXi}
  \left \{
  \begin{array}{rcl}
  \partial_{t}v_{\xi} + ((v_{\xi} - \xi )\cdot \nabla) v_{\xi} + 
  (v_{\xi} \cdot \nabla) w_{\xi} + (w_{\xi} \cdot \nabla) v_{\xi}
    -\Delta v_{\xi} & = & - \nabla P_{v_{\xi}}   \\
  \nabla \cdot v_{\xi} & = & 0  \\
  v_{\xi}(0,\cdot) & = & v_{0} \\
  \mathcal{T} \cdot (v_{\xi} \otimes v_{\xi}) 
  + 2 \, \mathcal{T}  \cdot (v_{\xi} \otimes w_{\xi}) = P_{v_{\xi}}(t,y) := P_{v}(t,x)    .
  \end{array}\right. 
\end{equation}
where $v_{\xi}(0,\cdot)  =  v_{0}$ means that we have $L^{2}$ weak convergence as $t \to 0^{+}$.  
Let 
\begin{equation}\label{Def:SigmaNu}
  \sigma_{\mu}(y) := (\mu + |y|^{2})^{-\frac12},
\quad
\mu >0
\end{equation}
and
define 
$$
t^{*}(\xi, \mu) := \sup \{ t \in [0,\infty] \ : 
\  \textstyle \int_{0}^{t} \int_{\Rtre} \sigma_{\mu}(y) |\nabla v_{\xi} (\tau , y)|^{2}  dy d \tau  \leq \int_{0}^{t} \| w_{\xi} \|^{r}_{L^{q}_{y}}(\tau)  d\tau \}.
$$ 
The integral
$\int_{0}^{t} \int_{\Rtre} \sigma_{\mu}(y) |\nabla v_{\xi} (\tau , y)|^{2}  dy d \tau $ is finite, for any $t >0$, 
since it is smaller than  
$\mu^{-\frac{1}{2}}\int_{0}^{t} \int_{\Rtre} |\nabla v (\tau , y)|^{2}  dy d \tau $, that is bounded by assumption. However, our ultimate goal is
to obtain a bound that is uniform in~$\mu >0$.
Since the functions 
$t \to \int_{0}^{t} \int_{\Rtre} \sigma_{\mu}(y) |\nabla v_{\xi} (\tau , y)|^{2}  dy d \tau $ and 
$t \to \int_{0}^{t} \| w_{\xi} \|^{r}_{L^{q}_{y}}(\tau)  d\tau$ are continuous  
and non decreasing in $t>0$, it is easy to show that
\begin{equation}\label{Def:RBar}
\begin{array}{lccc}
\mbox{(i)} & \int_{0}^{t^{*}} \int_{\Rtre} \sigma_{\mu}(y) |\nabla v_{\xi} (\tau , y)|^{2}  dy d \tau  = \int_{0}^{t^{*}} \| w_{\xi} \|^{r}_{L^{q}_{y}}(\tau)  d\tau ; & &  
\\
&&&
\\
\mbox{(ii)} & \int_{t^{*}}^{t} \int_{\Rtre} \sigma_{\mu}(y) |\nabla v_{\xi} (\tau , y)|^{2}  dy d \tau  > \int_{t^{*}}^{t} \| w_{\xi} \|^{r}_{L^{q}_{y}}(\tau)  d\tau  & \text{for all} & t > t^{*},
\end{array}
\end{equation}
provided $t^{*} < \infty$.
%if no such $t^{*}$ exists we set $t^{*} = \infty$.
%In other words, $t^{*}$ is the supremum of times
%such that
%$\int_{0}^{t} \int_{\Rtre} \sigma_{\mu}(y) |\nabla v_{\xi} (\tau , y)|^{2} d\tau dy
%\le\int_{0}^{t} \| w_{\xi} \|^{r}_{L^{q}_{x}}(\tau) d\tau$.
If $t^{*} = \infty$
either there exists a divergent sequence $t_{n} \to \infty$ such that
$$
\int_{0}^{t_{n}} \int_{\Rtre} \sigma_{\mu}(y) |\nabla v_{\xi} (\tau , y)|^{2}  dy d \tau = \int_{0}^{t_n} \| w_{\xi} \|^{r}_{L^{q}_{y}}(\tau) d\tau, 
$$
or
there exists a single (possibly equal to zero) $t_{n}$ 
which satisfies this and such that   
$$ 
\int_{t_{n}}^{t} \int_{\Rtre} \sigma_{\mu}(y)|\nabla v_{\xi} (\tau , y)|^{2} dy d \tau  < \int_{t_{n}}^{t} \| w_{\xi} \|^{r}_{L^{q}_{y}}(\tau) d\tau 
\quad
\mbox {for all}
\quad
 t > t_{n}.
$$ 
In both cases
%since
%the function $s \to \int_{0}^{t} \int_{\Rtre} \sigma_{\mu}(y) |\nabla v_{\xi} (\tau , y)|^{2} d\tau dy$ is increasing,
we have %the boundedness property 
$$
%\textstyle
\int_{0}^{\infty} \int_{\Rtre} \sigma_{\mu}(y) |\nabla v_{\xi} (\tau , y)|^{2} dy d\tau  \leq 
\int_{0}^{\infty} \| w_{\xi} \|^{r}_{L^{q}_{y}}(\tau) d\tau =: \mathcal{K},
$$ 
that, recalling the property~(i) in~(\ref{Def:RBar}), implies 
\begin{equation}\label{Prop:BB(t)BeforeRBar}
\int_{0}^{t^{*}} \int_{\Rtre} \sigma_{\mu}(y) |\nabla v_{\xi} (\tau , y)|^{2}  dy d \tau  \leq \mathcal{K} \, ,
\end{equation}
for any possible~$t^{*} \in [0,\infty]$.
%The estimate (\ref{Prop:BB(t)BeforeRBar}) is clearly true also for $t^{*} < \infty$ because of the property (i) in (\ref{Def:RBar}).

For any $t > t^{*}$ we
consider
the (space-time) segment 
$$
L(t,\xi) := \{ (s, \xi s) \ : \ s \in (0,t) \}. 
$$
We will investigate for which $(t,\xi)$ this set is regular. 
Notice that the change of variables (\ref{TransfI})
maps $L(t, \xi)$ 
into $(0,t) \times \left\{ 0 \right\} $, a vertical segment above the origin of the space-time.

\subsection{Second step: estimates for $t \leq t^{*}$}\label{SSFNM}

% Our first goal is to bound the quantity 
% \begin{equation*}
%   \int_{\RT} |y|^{-1} |v_{\xi}|^{2}(t^{*},y) \ dy.
% \end{equation*}
We have assumed $v$ to satisfy the perturbed energy inequality~\eqref{eq:StrongPertGenInProofOLD}, that after the change of 
variables~(\ref{TransfI}) becomes  
%is a suitable weak solution of 
%problem \eqref{CauchyNSForvXi}, the following
\begin{eqnarray}\label{ModEnWitXi}
  \textstyle
   \int_{\mathbb{R}^{3}} &  & \!\!\!\!\!\!\!\!\!\!\!\!\!    
  |v_{\xi}|^{2} \phi(t,y) dy
   + 
  \textstyle
  2 \int_{0}^{t}\int_{\mathbb{R}^{3}} 
   |\nabla  v_{\xi}|^{2} \phi
  \le 
  \int_{\mathbb{R}^{3}}  |v_{0}|^{2} \phi(0,y) dy
  \\   \nonumber
  & + &
  \textstyle
    \int_{0}^{t} \int_{\mathbb{R}^{3}} 
  |v_{\xi}|^{2} (\phi_{t} - \xi \cdot \nabla \phi + \Delta \phi)
  +
  \textstyle
   (|v_{\xi}|^{2} 
  + 2P_{v_{\xi}})v_{\xi} \cdot \nabla \phi
  \\ \nonumber
  & + &
  \textstyle
  \int_{0}^{t}\int_{\mathbb{R}^{3}} 
  |v_{\xi}|^{2} w_{\xi} \cdot \nabla \phi
  +  \textstyle
  2 
  \int_{0}^{t}\int_{\mathbb{R}^{3}}  
  (v_{\xi} \cdot w_{\xi}) v_{\xi} \cdot \nabla \phi 
  +
    (v_{\xi} \cdot \nabla ) v_{\xi} \cdot w_{\xi} \phi \, ,
\end{eqnarray}
valid for all $t > 0$ and
$\phi\in C^{\infty}_{c}(\mathbb{R} \times \mathbb{R}^{3})$ non negative.
%This is indeed the perturbed energy inequality (\ref{eq:PertGenInProof}),
%after the change of variables (\ref{TransfI}).
Thus 
\begin{eqnarray}\label{GenEn} 
    \textstyle
    \int_{\mathbb{R}^{3}} 
    \!\!
    & |v_{\xi}|^{2}   &    
    \!\!\!
     \phi  (t,y) dy
    + 
    \textstyle
    2 \int_{0}^{t}\int_{\mathbb{R}^{3}} 
     |\nabla  v_{\xi}|^{2} \phi 
    \le 
    \int_{\mathbb{R}^{3}}  |v_{0}|^{2} \phi(0,y) dy
    \\ \nonumber
    & + &
    \textstyle
    \!\!\!\!\!\!
    \int_{0}^{t} \int_{\mathbb{R}^{3}} 
    |v_{\xi}|^{2} (\phi_{t} - \xi \cdot \nabla \phi + \Delta \phi)
   + (|v_{\xi}|^{2} 
    + 2P_{v_{\xi}})v_{\xi} \cdot \nabla \phi
    \\  \nonumber
    & + &
    \textstyle
    \!\!\!\!\!\!
    \int_{0}^{t}\int_{\mathbb{R}^{3}} 3 |v_{\xi}|^{2} 
    |w_{\xi}| |\nabla \phi|
    + 18  |v_{\xi}| |\nabla v_{\xi}| |w_{\xi}| |\phi| \, .
  \end{eqnarray}
By a standard approximation argument
(see the proof of Lemma~8.3 in \cite{CKN}) 
this still holds for any test function of the form
\begin{equation}\nonumber
  \phi(t,y) := \psi(t) \phi_{1}(y)
\end{equation}
with
$
\phi_{1} \in C^{\infty}_{c}(\mathbb{R}^{3})$
non negative
and 
\begin{equation}\nonumber
  \psi :  [0,\infty) \rightarrow [0,\infty)
  \quad \mbox{absolutely continuous with} 
  \quad  \dot{\psi} \in L^{1}(0,\infty).
\end{equation}
We shall choose here
\begin{equation}\nonumber
  \psi(t):=1,
  \qquad
  \phi_{1} =\sigma_{\mu}(y) \chi(\delta |y|), 
\end{equation}
where $\sigma_{\mu}(y)$ has been defined in (\ref{Def:SigmaNu}), $\delta>0$  
and $\chi : [0,\infty) \to [0,\infty)$
is a smooth non increasing function such that
\begin{equation}\nonumber
  \chi =1 \ \text{on}\  [0,1],
  \qquad
  \chi =0  \ \text{on}\ [2, \infty].
\end{equation} 
Recalling~$P \in L^{3/2}_{loc}((0,\infty) \times \Rtre)$,~$w \in L^{r}_{t}L^{q}_{x}$, $v \in L^{\infty}_{t}L^{2}_{x} \cap L^{2}_{t}\dot{H}^{1}_{x}$, 
so that $v$ also belongs the mixed spaces in~\eqref{UIP}, since the same clearly holds
for $P_{v_{\xi}}, w_{\xi}, v_{\xi}$, we can easily pass to the limit  
$\delta \rightarrow 0$ so that
\begin{equation}\label{PassToTheLim}
\begin{split}
  \textstyle
  \left[ \int_{\mathbb{R}^{3}}  
    |v_{\xi}|^{2} \sigma_{\mu} \right]_{0}^{t} 
  &+
  \textstyle
  2\int_{0}^{t}\int_{\mathbb{R}^{3}}  
    |\nabla v_{\xi}|^{2} \sigma_{\mu}  \le
  \\
  & \le 
  \textstyle
  \int_{0}^{t}\int_{\mathbb{R}^{3}}  
  |v_{\xi}|^{2}(- \xi \cdot \nabla \sigma_{\mu} 
  + \Delta \sigma_{\mu}) 
  +
  \textstyle
   (|v_{\xi}|^{2} 
  + 2P_{v_{\xi}})v_{\xi}\cdot \nabla \sigma_{\mu}
  \\
  &+
  \textstyle
  18 \int_{0}^{t}\int_{\mathbb{R}^{3}} 
    |v_{\xi}| |\nabla v_{\xi}| |w_{\xi}| \sigma_{\mu}
  + 
  3  
  |v_{\xi}|^{2} | w_{\xi} | | \nabla \sigma_{\mu} |.
\end{split}
\end{equation}
%Note that a similar argument is used in \cite{CKN},
%he differences is the presence of the last two 
%erturbative terms. 
Then, since
\begin{equation}\label{eq:sigmapr}
  \textstyle
 |\nabla \sigma_{\mu}| < (\mu + |y|^{2})^{-1} 
 = \sigma_{\mu}^{2},
 \qquad \Delta \sigma_{\mu} < 0,
\end{equation}
we arrive to the inequality
\begin{eqnarray}\label{PutIn}
  \textstyle
  \left[ \int_{\mathbb{R}^{3}} \sigma_{\mu} 
    |v_{\xi}|^{2} \right]_{0}^{t} 
  & + &
  \textstyle
  2\int_{0}^{t}\int_{\mathbb{R}^{3}} \sigma_{\mu} 
    |\nabla v_{\xi}|^{2}
  \le 
   |\xi|  
  \int_{0}^{t}\int_{\mathbb{R}^{3}} 
    \sigma_{\mu}^{2} |v_{\xi}|^{2}
  \\ \nonumber
  & +  &
  \textstyle
  \int_{0}^{t}\int_{\mathbb{R}^{3}} \sigma_{\mu}^{2}
    (|v_{\xi}|^{3} + 2|P_{v_{\xi}}||v_{\xi}|
    +3 |v_{\xi}|^{2}|w_{\xi}|) 
    + 18 \sigma_{\mu} |v_{\xi}| | \nabla v_{\xi} | 
    | w_{\xi} |.
\end{eqnarray}
The next goal is to deduce an integral inequality
for the functions 
\begin{equation}\nonumber
%\textstyle
a_{\mu}(t) := \int_{\mathbb{R}^{3}} \sigma_{\mu}(y) |v_{\xi}(t,y)|^{2} dy, 
\qquad
B_{\mu}(t) := \int_{0}^{t}\int_{\mathbb{R}^{3}} 
\sigma_{\mu}(y) | \nabla v_{\xi}(\tau,y)|^{2} dy d\tau  \, .
\end{equation}
%We can now proceed as in the first case, using
%\eqref{PutIn} to obtain a Gr\"onwall type
%inequality for the quantities
%\begin{equation}\nonumber
 % a_{\mu}(t) = 
  %\|\sigma_{\mu}^{1/2}v_{\xi}\|_{L^{2}}^{2},
  %\qquad
  %B_{\mu}(t) = 
  %\int_{0}^{t}\|\sigma_{\mu}^{1/2}
   % \nabla v_{\xi}(s)\|_{L^{2}}^{2}ds.
%\end{equation}
In order to bound the terms on the right hand side of (\ref{PutIn})
we use the Stein weighted estimates for singular integrals (\ref{SteinIneq})
and the Caffarelli--Kohn--Nirenberg interpolation inequalities (\ref{CKNInequalityCKNVersion}).
For brevity we will refer to these inequalities as SS and CKN, respectively.

We first bound the terms involving the pressure  
$
 P_{v_{\xi}} =   \mathcal{T} \cdot (v_{\xi} \otimes v_{\xi}) 
  + 2 \, \mathcal{T} \cdot (v_{\xi} \otimes w_{\xi}).
$
We have
\begin{align}\label{PressDec}
  \textstyle
  2\int_{\RT} \sigma_{\mu}^{2} |P_{v_{\xi}}| |v_{\xi}|
  & \textstyle
  \leq 
  2\int_{\RT} \sigma_{\mu}^{2} |v_{\xi}| 
  | \mathcal{T} \cdot (v_{\xi} \otimes v_{\xi})| 
  \\ \nonumber
  & \textstyle 
   + 4 \int_{\RT} \sigma_{\mu}^{2} |v_{\xi}| 
  | \mathcal{T}  \cdot (v_{\xi} \otimes w_{\xi})|
   =: I + II.
\end{align}
Here and in the following $Z \geq 1$ denotes several
constants, only depending on $(r,q)$, possibly increasing from line to line.
%The final meaning of
%$Z$ will be the maximum of all such constants.
By the SS inequality~\eqref{SteinIneq} 
we have\footnote{In Definition~\ref{Def:GenRefSol} we have required $\mathcal{T}$ to satisfy the family 
of inequalities~\eqref{SteinIneq} only in the case~$\mu =0$, however the general case can be deduced as shown in~\cite[Lemma 7.2]{CKN}. 
}
\begin{equation}\nonumber
  I \le
  2 \|\sigma_{\mu} 
  \mathcal{T}   \cdot (v_{\xi} \otimes v_{\xi})\|_{L^{2}}
  \| \sigma_{\mu}  v_{\xi}  \|_{L^{2}}
  \le
  Z
  \|\sigma_{\mu} |v_{\xi}|^{2} \|_{L^{2}}
  \| \sigma_{\mu}  v_{\xi}  \|_{L^{2}} 
  \le
  Z
  \|\sigma_{\mu}^{1/2} v_{\xi} \|_{L^{4}}^{2}
  \| \sigma_{\mu}  v_{\xi}  \|_{L^{2}} 
\end{equation}
then using the CKN inequality \eqref{CKNInequalityCKNVersion} we obtain
\begin{equation}\nonumber%\label{FirstIn}
  I\le
  Z
  \|\sigma_{\mu}^{1/2} \nabla v_{\xi}  \|^{3/2}_{L^{2}}
  \| \sigma_{\mu}^{1/2}  v_{\xi}  \|_{L^{2}}^{1/2} 
  \cdot
  \|\sigma_{\mu}^{1/2} \nabla v_{\xi}  \|^{1/2}_{L^{2}}
  \| \sigma_{\mu}^{1/2}  v_{\xi}  \|_{L^{2}}^{1/2} 
  =
  Z
  \dot{B}_{\mu} a_{\mu}^{1/2} 
  \le 
  \frac{1}{6}\dot{B}_{\mu}  
  + Z \dot{B}_{\mu}a_{\mu}.
\end{equation}
In a similar way 
\begin{eqnarray} \nonumber
  II
  &  \le  &
   4 \| \sigma_{\mu} 
   \mathcal{T}  \cdot (v_{\xi} \otimes w_{\xi}) \|_{L^{2q/(q+1)}}
  \| \sigma_{\mu}  v_{\xi} \|_{L^{2q/(q-1)}} 
  \\ \nonumber
  & \le &
  Z
  \| \sigma_{\mu} |v_{\xi}| |w_{\xi}| \|_{L^{2q/(q+1)}}
  \| \sigma_{\mu}  v_{\xi}  \|_{L^{2q/(q-1)}}
   \le 
  Z
   \|  w_{\xi} \|_{L^{q}}
  \| \sigma_{\mu}  v_{\xi}  \|_{L^{2q/(q-1)}}^{2}
\end{eqnarray}
and, again by CKN with
$2 \theta = 1 + 3/q$, which implies $r = 1/(1-\theta)$,
where
$\theta$ is the interpolation parameter in 
(\ref{CKNInequalityCKNVersion}), we get
\begin{equation}\nonumber%\label{SecIn}
  II
  \le
  Z
  \|  w_{\xi} \|_{L^{q}}
  \| \sigma_{\mu}^{1/2}  v_{\xi}  \|^{2(1-\theta)}_{L^{2}} 
  \| \sigma_{\mu}^{1/2}  \nabla v_{\xi}  \|^{2\theta}_{L^{2}},
  =
  Z
  \|  w_{\xi} \|_{L^{q}}
  a_{\mu}^{1-\theta}\dot{B}_{\mu}^{\theta}
  \le
  \frac{1}{6}\dot{B}_{\mu} 
  + Z \| w_{\xi} \|^{r}_{L^{q}} a_{\mu}. 
\end{equation}

We now consider the other terms in the right hand side of \eqref{PutIn}.
As above, we use CKN to bound
\begin{equation}\nonumber%\label{ThirdIn}
  \textstyle
  |\xi|
  \int_{\RT} \sigma_{\mu}^{2} |v_{\xi}|^{2} 
  % = 
  % |\xi|
  % \| \sigma_{\epsilon} v_{\xi} \|_{L^{2}}^{2} 
  \le
  Z
  |\xi|
  \| \sigma_{\mu}^{1/2} \nabla v_{\xi}  \|_{L^{2}}
  \| \sigma_{\mu}^{1/2} v_{\xi} \|_{L^{2}} =
  Z|\xi|(\dot{B}_{\mu}a_{\mu})^{1/2}
  \le
  \frac{1}{6}\dot{B}_{\mu} + Z |\xi|^{2} a_{\mu};
\end{equation}
and
\begin{equation}\nonumber%\label{FourthIn}
  \textstyle
  \int_{\RT} \sigma_{\mu}^{2} |v_{\xi}|^{3} 
  = \| \sigma_{\mu}^{2/3} v_{\xi} \|_{L^{3}}^{3} 
  \le
  Z
  \| \sigma_{\mu}^{1/2} \nabla v_{\xi}  \|^{2}_{L^{2}}
  \| \sigma_{\mu}^{1/2} v_{\xi} \|_{L^{2}}
  =
  Z
  \dot{B}_{\mu} a_{\mu}^{1/2} 
  \leq 
  \frac{1}{6}\dot{B}_{\mu}  + Z \dot{B}_{\mu}a_{\mu}.
\end{equation}
In order to bound the terms with $w_{\xi}$ we use CKN with $2 \theta = 1 + 3/q$
\begin{equation}\nonumber%\label{SixthIn}
\begin{split}
  \textstyle
  3 \int_{\RT}\sigma_{\mu}^{2} |v_{\xi}|^{2} |w_{\xi}|
  \le&
  3 \|  w_{\xi} \|_{L^{q}}
  \| \sigma_{\mu}  v_{\xi}  \|_{L^{2q/(q-1)}}^{2}
  \le
  Z
  \|  w_{\xi} \|_{L^{q}}
  \| \sigma_{\mu}^{1/2}  v_{\xi}  \|^{2(1-\theta)}_{L^{2}} 
  \| \sigma_{\mu}^{1/2}  \nabla v_{\xi}  \|^{2\theta}_{L^{2}}
  \\
  =&
  Z\|  w_{\xi} \|_{L^{q}}
  a_{\mu}^{1-\theta}
  \dot{B}_{\mu}^{\theta}
  \le
   \frac{1}{6}\dot{B}_{\mu}  
  + Z \|  w_{\xi} \|^{r}_{L^{q}} a_{\mu}
\end{split}
\end{equation}
and CKN with $\theta = 1- 2/r$, which implies $2/(1-\theta) =r$,
\begin{equation}\nonumber%\label{SevIn}
\begin{split}
 \textstyle
  18 \int_{\RT}\sigma_{\mu} |v_{\xi}| |\nabla v_{\xi}| 
  |w_{\xi}| 
  \le&
  18  \| \sigma_{\mu}^{1/2}  \nabla v_{\xi}  \|_{L^{2}}
  \| w_{\xi} \|_{L^{q}} 
  \| \sigma_{\mu}^{1/2}  v_{\xi}  \|_{L^{2q/(q-2)}}
  \\
  \le&
  Z
  \| \sigma_{\mu}^{1/2}  \nabla v_{\xi}  \|_{L^{2}}
  \| w_{\xi} \|_{L^{q}}
  \| \sigma_{\mu}^{1/2}  \nabla v_{\xi}  \|^{\theta}_{L^{2}}
  \| \sigma_{\mu}^{1/2}  v_{\xi}  \|^{1-\theta}_{L^{2}}
  \\
  =&
  Z\| w_{\xi} \|_{L^{q}}
  \dot{B}_{\mu}^{(1+\theta)/2}a_{\mu}^{(1-\theta)/2}
  \le
   \frac{1}{6}\dot{B}_{\mu}  
  + Z \|  w_{\xi} \|^{r}_{L^{q}} a_{\mu}.
\end{split}
\end{equation}
Now, recalling \eqref{PutIn},
summing all these inequalities and absorbing
the resulting term~$ \int_{0}^{t}\dot{B}_{\mu}(\tau)d\tau =
  B_{\mu}(t)$ from the right hand side into the left hand side, we obtain
\begin{equation*}
  a_{\mu}(t) + B_{\mu}(t) \leq 
  a_{\mu}(0) +
  Z\int_{0}^{t}  \left( 
  |\xi|^{2} + 
  \dot{B}_{\mu}(\tau) +  
  \|  w_{\xi} (\tau,\cdot) \|^{r}_{L^{q}(\Rtre)}
  \right) a_{\mu}(\tau) \ d\tau.
\end{equation*}
Let
\begin{equation*}
    a(t) := \int_{\mathbb{R}^{3}} |y|^{-1} 
    |v_{\xi}(t,y)|^{2}dy \, .
  %\qquad
  %B(t) := \int_{0}^{t} \int_{\mathbb{R}^{3}} 
  %|y|^{-1} |\nabla v_{\xi}(s,y)|^{2}dsdy.
\end{equation*}
Since $a_{\mu}(0) \leq a(0)$,
we have obtained the estimate
\begin{equation*}
  a_{\mu}(t) + B_{\mu}(t) \leq 
  a(0)  +
  Z\int_{0}^{t}  \left( 
  |\xi|^{2} + 
  \dot{B}_{\mu}(\tau) + 
  \|  w_{\xi}(\tau,\cdot)  \|^{r}_{L^{q}(\Rtre)}
  \right) a_{\mu}(\tau) \ d\tau,
\end{equation*}
for some constant $Z$.
By Gr\"onwall's lemma we get,
for $0 \le t \le t^{*}$:
\begin{equation}\label{Labella}
  a_{\mu}(t)\le a(0) e^{ZA},
  \qquad
  A := B_{\mu}(t^{*})+\|w_{\xi}\|_{L^{r}_{t}L^{q}_{y}}^{r}+t^{*}|\xi|^{2}.
\end{equation}
Recalling~\eqref{Prop:BB(t)BeforeRBar}, namely that the quantity $B_{\mu}(t^{*})$ is smaller than $\mathcal{K}$, %uniformly in $\mu > 0$, 
and since\footnote{$w_{\xi}$, at fixed $t$, is simply a translation of $w$.}
$$
\|w_{\xi} \|_{L^{r}_{t}L^{q}_{y}}^{r} = \|w\|_{L^{r}_{t}L^{q}_{x}}^{r} =: \mathcal{K} \, ,
$$
we have
$$
A\le 2 \mathcal{K} +t^{*}|\xi|^{2}.
$$ 
%while by
%\eqref{eq:decomp2} we have $a(0)\le (Z \theta_{2}\epsilon)^{2}$
%
% \begin{equation}\label{BoundBySmallness}
% a(t^{*}) \leq e^{ C  ( \theta^{2} t^{*} 
% + 2M^{2} ) + 
% 3 C ( C_{1} A p_{1} \varepsilon  )^{8} }  a(0)
% \leq e^{ C ( \theta^{2} t^{*} 
% + 2M^{2} ) + 
% 3 C ( C_{1} A p_{1} \varepsilon )^{8} } ( B p_{2} \varepsilon)^{2}.
% \end{equation}
Thus, if we restrict to the 
vectors $\xi$ such that%\footnote{Remember 
%that $t^{*}$ is a function of $\xi$.} 
\begin{equation}\label{ResrtCondFirst}
|\xi|^{2}t^{*} \leq 1
\end{equation} 
%and we take the limit\footnote{This is allowed by the monotonicity with respect to $\mu$ of the functions involved.} $\mu \rightarrow 0$,
the estimate (\ref{Labella}) gives 
\begin{equation*}
  a_{\mu}(t) \le \epsilon^{2}
    e^{Z(2 \mathcal{K} +1 )},
    \quad
    \mbox{for all}
    \quad 
    0 \leq t \leq t^{*}, 
\end{equation*}
where we have denoted
$$
\epsilon :=
\sqrt{a(0)} =
\| |x|^{-1/2} v_0 \|_{L^{2}}. 
$$
Taking a suitably larger constant $Z$, this implies
\begin{equation}\label{BoundBySmallnessBis}
  a_{\mu}(t)\le Z e^{Z \mathcal{K}}\epsilon^{2},
  \quad
    \mbox{for all}
    \quad 
    0 \leq t \leq t^{*}.
  \end{equation}
% UTILE ?? We again remark that this bound holds uniformly in $\mu > 0$.

\subsection{Third step: estimates for $t >t^{*}$}\label{SameArgument}

Here the idea is to repeat the previous argument 
starting by the point 
$(t^{*},t^{*}\xi)$ of the segment~$L(t,\xi) \times \{ 0 \}$, rather than the origin. 
To do so we want to use the inequality~\eqref{ModEnWitXi}, but the time integration has to be 
over~$[t^{*}, t]$ rather than~$[0, t]$. Since we know that~$v$ satisfies the perturbed energy 
inequality~\eqref{eq:StrongPertGenInProofOLD}, 
changing variables as in~\eqref{TransfI}, we can actually do this 
%\begin{eqnarray}\label{Dis:restarting}
%  \textstyle
%   \int_{\mathbb{R}^{3}} &  & \!\!\!\!\!\!\!\!\!\!\!\!\!\!\!\!   
%  |v_{\xi}|^{2}  \phi(t,y) dy
%   + 
%  \textstyle
%  2 \int_{t_{0}}^{t}\int_{\mathbb{R}^{3}} 
%   |\nabla  v_{\xi}|^{2} \phi
%  \le 
%  \int_{\mathbb{R}^{3}}  |v_{\xi}|^{2} \phi(t_{0},y)dy
%  \\   \nonumber
%  & + &
%  \textstyle
%    \int_{t_{0}}^{t} \int_{\mathbb{R}^{3}} 
%  |v_{\xi}|^{2} (\phi_{t} - \xi \cdot \nabla \phi + \Delta \phi)
%  +
%  \textstyle
%   (|v_{\xi}|^{2} 
%  + 2P_{v_{\xi}})v_{\xi} \cdot \nabla \phi
%  \\ \nonumber
%  & + &
%  \textstyle
%  \int_{t_{0}}^{t}\int_{\mathbb{R}^{3}} 
%  |v_{\xi}|^{2} w_{\xi} \cdot \nabla \phi
%  +  \textstyle
%  2 
%  \int_{t_{0}}^{t}\int_{\mathbb{R}^{3}}  
%  (v_{\xi} \cdot w_{\xi}) v_{\xi} \cdot \nabla \phi 
%  +
%    (v_{\xi} \cdot \nabla ) v_{\xi} \cdot w_{\xi} \phi,
%\end{eqnarray} 
on intervals~$[t^{*}_{n}, t^{*}]$, where $t^{*}_{n}$ is a sequence 
of times, smaller or equal than~$t^{*}$, and such that $t^{*}_{n} \to t^{*}$. 
Notice that this is possible since in the inequality 
\eqref{eq:StrongPertGenInProofOLD} the integration is over the time interval $[t_0, t]$ where 
$t_0$ is allowed to be zero or almost any real number in~$(0,\infty)$.

Choosing as test functions
$\phi(t,y):=\psi_{\mu}(t)\sigma_{\mu}(y) \chi(\delta |y|)$
where $\chi$ and $\sigma_{\mu}$ are as before\footnote{To be precise we have to consider
two vanishing sequences $\delta_{n}, \mu_{n}$ instead of $\delta, \mu$, in order to be sure that the exceptional times, starting from 
which~\eqref{eq:StrongPertGenInProofOLD} 
may not be satisfied for at least one of our test functions, has measure zero. However,
we keep writing
$\delta, \mu$ for simplicity.}, while
\begin{equation}\nonumber
  \psi_{\mu}(t) := e^{-k B_{t^{*}_{n}, \mu}(t)},
  \qquad 
  B_{t^{*}_{n}, \mu}(t) := 
  \int_{t^{*}_{n}}^{t}
  \int_{\mathbb{R}^{3}} 
  \sigma_{\mu}(y) |\nabla v_{\xi}(\tau,y)|^{2} dy d\tau \, ,
\end{equation}
with $k$ a positive constant to be specified, and proceeding as before, we arrive to 
\begin{equation}\nonumber
\begin{split}
  \textstyle
  [ \int_{\mathbb{R}^{3}} 
  &
  \psi_{\mu} \sigma_{\mu} 
  |v_{\xi}|^{2} ]_{t^{*}_{n}}^{ t}
   + 
  \textstyle
  2\int_{t^{*}_{n}}^{t}\int_{\mathbb{R}^{3}} 
  \psi_{\mu} \sigma_{\mu} |\nabla v_{\xi}|^{2} 
  \le
  \\
  \le &
  \textstyle
  \int_{t^{*}_{n}}^{t}\int_{\mathbb{R}^{3}} 
  \psi_{\mu} |v_{\xi}|^{2}
  (-k \dot{B}_{t^{*}_{n} , \mu} \sigma_{\mu} 
  -\xi \cdot \nabla \sigma_{\mu} + \Delta \sigma_{\mu}) 
  +
  \int_{t^{*}_{n}}^{ t}\int_{\mathbb{R}^{3}} 
  \psi_{\mu} (|v_{\xi}|^{2} 
  +2P_{v_{\xi}} ) v_{\xi} \cdot \nabla \sigma_{\mu} 
  \\
  +&
  \textstyle
  18 \int_{t^{*}_{n}}^{ t}\int_{\mathbb{R}^{3}} 
  \psi_{\mu} \sigma_{\mu}  |v_{\xi}| |\nabla v_{\xi}| 
  |w_{\xi}| 
  +
  3 \int_{t^{*}_{n}}^{t}\int_{\mathbb{R}^{3}} 
  \psi_{\mu} |v_{\xi}|^{2} | w_{\xi} | 
  | \nabla \sigma_{\mu} | \, .
\end{split}
\end{equation}
This implies, recalling \eqref{eq:sigmapr},
\begin{equation}\label{PutIn2}
\begin{split}
  \textstyle
  [ \int_{\mathbb{R}^{3}} 
  \psi_{\mu} \sigma_{\mu} 
  &
  \textstyle
  |v_{\xi}|^{2} 
  ]_{t^{*}_{n}}^{ t} 
  + 2\int_{t^{*}_{n}}^{ t}\int_{\mathbb{R}^{3}} 
  \psi_{\mu} \sigma_{\mu} |\nabla v_{\xi}|^{2} 
  \le
  \\
  \le &
  \textstyle
  \int_{t^{*}_{n}}^{ t}
  \int_{\mathbb{R}^{3}} \psi_{\mu} 
  |v_{\xi}|^{2}(|\xi| \sigma_{\mu}^{2} 
  - k \dot{B}_{t^{*}_{n} , \mu} \sigma_{\mu})
  \\
  + &
  \textstyle
  \int_{t^{*}_{n}}^{ t}
  \psi_{\mu}
  \int_{\mathbb{R}^{3}} 
  \sigma_{\mu}^{2}
  (|v_{\xi}|^{3} + 2|P_{v_{\xi}}||v_{\xi}|
  +3 |v_{\xi}|^{2}|w_{\xi}|) 
  + 18 \sigma_{\mu} |v_{\xi}| | \nabla v_{\xi} | | w_{\xi} |. 
\end{split}
\end{equation}
Again, our goal is to prove an integral inequality 
for the functions 
\begin{equation}\nonumber
 % \textstyle
  a_{\mu}( t) := 
  \int_{\mathbb{R}^{3}} \sigma_{\mu}(y) 
  |v_{\xi} (t,y)|^{2} dy,
  \qquad
  B_{t^{*}_{n}, \mu}(t) := 
  \int_{t^{*}_{n}}^{t} \int_{\mathbb{R}^{3}} 
  \sigma_{\mu}(y) |\nabla v_{\xi}(\tau,y)|^{2} dy d\tau \, .
\end{equation}
We need to bound the terms in the right hand side
of \eqref{PutIn2}. Recalling the decomposition~\eqref{PressDec},
%\begin{equation}\nonumber
%  \textstyle
%  2\int_{\RT} \sigma_{\mu}^{2} |P_{v_{\xi}}| |v_{\xi}|
%  \leq 
%  2\int_{\RT} \sigma_{\mu}^{2} 
%  |v_{\xi}| | \mathcal{T}  \cdot (v_{\xi} \otimes v_{\xi})| 
%  +
%  4 \int_{\RT} \sigma_{\mu}^{2} 
%  |v_{\xi}| | \mathcal{T}   \cdot (v_{\xi} \otimes w_{\xi})| 
%   =:  I + II.
%\end{equation}
with the same computations of the second step, we obtain
\begin{equation}\nonumber%\label{FirstInBis}
  I \leq 
  \frac{1}{5}\dot{B}_{t^{*}_{n},\mu} 
  + Z \dot{B}_{t^{*}_{n},\mu} a_{\mu}.
\end{equation}
While, using the SS and CKN
inequality, the last one with $2 \theta = 1+ 3/q$, that implies $r= 1/(1-\theta)$,
\begin{eqnarray}\nonumber%\label{SecInBis}
  II
  &  \le  &
  Z\|   w_{\xi}  \|_{L^{q}}
  \| \sigma_{\mu}^{1/2}  v_{\xi}  \|^{2(1-\theta)}_{L^{2}} 
  \| \sigma_{\mu}^{1/2}  \nabla  v_{\xi}  \|^{2\theta}_{L^{2}}
  =
  Z
  \|   w_{\xi}  \|_{L^{q}}
  a_{\mu}^{1-\theta}\dot{B}_{t^{*}_{n},\mu}^{\theta}
  \\ \nonumber
  & = &
  \|   w_{\xi}  \|_{L^{q}}
  (a_{\mu}\dot{B}_{t^{*}_{n},\mu})^{1-\theta}\dot{B}_{t^{*}_{n},\mu}^{2\theta-1}
   \le 
  \frac{1}{5}\dot{B}_{t^{*}_{n},\mu} 
  +
  Z \dot{B}_{t^{*}_{n},\mu}a_{\mu}
  +
  \frac{1}{15} \|  w_{\xi}  \|^{r}_{L^{q}}.
\end{eqnarray}
Exactly as in the second step
\begin{equation}\nonumber%\label{FourthInBis}
  \textstyle
  \int_{\RT} \sigma_{\mu}^{2} |v_{\xi}|^{3} 
  %= 
  %\| \sigma_{\mu}^{2/3} v_{\xi} \|_{L^{3}}^{3}
  %\le
  %Z
  %\| \sigma_{\mu}^{1/2}  \nabla v_{\xi}  \|^{2}_{L^{2}}
  %\| \sigma_{\mu}^{1/2}  v_{\xi}  \|_{L^{2}}
  %=
  %Z
  %\dot{B}_{t^{*}_{n},\mu} a_{\mu}^{1/2} 
  \le
  \frac{1}{5}\dot{B}_{t^{*}_{n},\mu}  
  +Z
  \dot{B}_{t^{*}_{n},\mu}a_{\mu} \, ,
\end{equation}
while the next terms has to been estimated differently. Using CKN
\begin{align}\nonumber%\label{ThirdInBis}
  \textstyle
  |\xi|\int_{\RT} \sigma_{\mu}^{2} |v_{\xi}|^{2} 
  = 
  |\xi| \| \sigma_{\mu}  v_{\xi}   \|_{L^{2}}^{2}
  & \le
  \textstyle
  Z
  |\xi|  \| \sigma_{\mu}^{1/2}   \nabla v_{\xi}   \|_{L^{2}}
  \| \sigma_{\mu}^{1/2}    v_{\xi}   \|_{L^{2}}
  \\ \nonumber
  &
  =
  Z
  |\xi|
  (\dot{B}_{t^{*}_{n},\mu} a_{\mu})^{1/2} 
  \le |\xi|^{2} + 
  Z
  \dot{B}_{t^{*}_{n},\mu} a_{\mu}
\end{align}
and, still using
CKN with $2\theta = 1+3/q$, that implies $r= 1/(1-\theta)$,
\begin{eqnarray}\nonumber
   & 3  & 
  \textstyle 
  \int_{\RT}\sigma_{\mu}^{2} |v_{\xi}|^{2} |w_{\xi}| 
   \le 
  3 \|   w_{\xi}  \|_{L^{q}}
  \| \sigma_{\mu}  v_{\xi}  \|_{L^{2q/(q-1)}}^{2}
  \\ \nonumber
  & \le &
  \textstyle
  Z
  \|  w_{\xi}  \|_{L^{q}}
  \| \sigma_{\mu}^{1/2}  v_{\xi}  \|^{2(1-\theta)}_{L^{2}} 
  \| \sigma_{\mu}^{1/2}  \nabla v_{\xi}  \|^{2\theta}_{L^{2}}
    =  
  Z
  \|  w_{\xi}  \|_{L^{q}}
  a_{\mu}^{1-\theta}\dot{B}_{t^{*}_{n}, \mu}^{\theta}
  \\ \nonumber
  & = &
  \textstyle
  Z
  \|  w_{\xi}  \|_{L^{q}}
  (a_{\mu} \dot{B}_{t^{*}_{n}, \mu})^{1-\theta} \dot{B}_{t^{*}_{n}, \mu}^{2\theta -1}
  \le
  \frac{1}{5}\dot{B}_{t^{*}_{n},\mu} 
  +
  Z \dot{B}_{t^{*}_{n},\mu}a_{\mu}
  +
  \frac{1}{15} \|  w_{\xi}  \|^{r}_{L^{q}_{y}} \, .
\end{eqnarray}
By CKN with $\theta = 1-2/r$, that implies $r=2/(1-\theta)$, 
\begin{equation}\nonumber%\label{SeventhInBis}
\begin{split}
  \textstyle
  18 \int_{\RT} \sigma_{\mu} |v_{\xi}| |\nabla v_{\xi}| |w_{\xi}| 
  \le &
  18  \| \sigma_{\mu}^{1/2}  \nabla v_{\xi}  \|_{L^{2}}
  \|  w_{\xi}  \|_{L^{q}} 
  \| \sigma_{\mu}^{1/2}   v_{\xi}   \|_{L^{2q(q-2)}}
  \\
  \le &
  Z
  \| \sigma_{\mu}^{1/2}  \nabla v_{\xi}  \|_{L^{2}}
  \| w_{\xi}\|_{L^{q}}
  \| \sigma_{\mu}^{1/2}   \nabla v_{\xi}  \|^{\theta}_{L^{2}}
  \| \sigma_{\mu}^{1/2}  v_{\xi}  \|^{1-\theta}_{L^{2}}
  \\
  =&
  Z
  \|  w_{\xi}  \|_{L^{q}}
  a_{\mu}^{(1-\theta)/2}\dot{B}_{t^{*}_{n}, \mu}^{(1+\theta)/2}
   = 
  Z
  \|  w_{\xi}  \|_{L^{q}}
  (a_{\mu} \dot{B}_{t^{*}_{n}, \mu})^{(1-\theta)/2}\dot{B}_{t^{*}_{n}, \mu}^{\theta}
  \\ 
   \le &
  \frac{1}{5}\dot{B}_{t^{*}_{n},\mu} 
  +
  Z \dot{B}_{t^{*}_{n},\mu}a_{\mu}
  +
  \frac{1}{15} \|  w_{\xi}  \|^{r}_{L^{q}_{y}} \, .
\end{split}
\end{equation}
We can now plug these inequalities in (\ref{PutIn2})
so that%\footnote{notice that we have five terms of the 
%form $\dot{B}_{t^{*}_{n},\mu}/5$ , six terms of the form $Z \dot{B}_{t^{*}_{n},\mu}a_{\mu}$, three terms of the form $\|  w  \|^{r}_{L^{q}_{x}}/30$ and
%one term of the form $|\xi|^{2}$.}
\begin{equation}\nonumber
\begin{split}
  a_{\mu}(t)
  &
  \psi_{\mu}(t)
  \textstyle
  -a_{\mu}(t^{*}_{n})
  +2\int_{t^{*}_{n}}^{t}
  \dot{B}_{t^{*}_{n},\mu}(s)\psi_{\mu}(s)ds
  \le
  \\
  \le
  &
  \textstyle
  \int_{t^{*}_{n}}^{t}
  \psi_{\mu}(s)
  [ \dot{B}_{t^{*}_{n},\mu}(s)
  + 6 Z \dot{B}_{t^{*}_{n}, \mu} a_{\mu}(s)
  +|\xi|^{2}
  + \frac{1}{5} \|  w_{\xi} \|^{r}_{L^{q}_{y}}(s)
  -k\dot{B}_{t^{*}_{n}, \mu} a_{\mu}(s)
  ] \ ds.
\end{split}
\end{equation}
Now we subtract the first term of the right hand side
from the left hand side and
choose $k=6Z$ in order to cancel each other out the second and the last term on the right hand side. Thus, noting  
\begin{equation}\nonumber
  %\textstyle
  \int_{t^{*}_{n}}^{t}
  \dot{B}_{t^{*}_{n},\mu}\psi_{\mu}
  =
  -\frac{1}{6Z}
  \int_{t^{*}_{n}}^{t}
  \dot{\psi}_{\mu}=
  \frac{\psi_{\mu}(t^{*}_{n})-\psi_{\mu}(t)}{6Z}
  =
  \frac{1-\psi_{\mu}(t)}{6Z}
\end{equation}
we have proved 
\begin{eqnarray}\label{LastInPreq}
   a_{\mu}(t)
  \psi_{\mu}(t)
  & - &
  a_{\mu}(t^{*}_{n})
  +
  \frac{1-\psi_{\mu}(t)}{6Z}
  \leq
  \\  \nonumber
  & \le &
  |\xi|^{2} \int_{t^{*}_{n}}^{t}
  \psi_{\mu}(s)ds+
  \frac{1}{5} \int_{t^{*}_{n}}^{t}
  \|  w_{\xi} \|^{r}_{L^{q}_{y}} (s) \psi_{\mu}(s) ds
  =: I + II \, ,
  \end{eqnarray}
for $t > t^{*}_{n}$.
Since $\psi_{\mu} \leq 1$, 
the term $I$ is immediately bounded by
\begin{equation}\label{Eq:IBound}
I \leq % |\xi|^{2} (t-t^{*}_{n}) 
|\xi|^{2} t \, .
\end{equation}
The bound of $II$ is more delicate and requires 
the property (ii) in~\eqref{Def:RBar}.
Indeed, letting
\begin{equation}\nonumber
B_{t^{*}\!\!\!, \, \mu}(t) := \int_{t^{*}}^{t}\int_{\mathbb{R}^{3}} 
\sigma_{\mu}(y) | \nabla v_{\xi}(\tau,y)|^{2} dy d\tau \, ,
%\quad
%B_{t^{*}}(t) = \int_{t^{*}}^{t}\int_{\mathbb{R}^{3}} 
%|y|^{-1} | \nabla v_{\xi}(s,y)|^{2}dyds,
\end{equation}
the property (ii) becomes
\begin{equation}\nonumber
B_{t^{*}\!\!\!, \, \mu}(t) > \int_{t^{*}}^{t} \| w_{\xi} \|_{L^{q}_{y}}^{r}(\tau) d \tau,
\quad \text{for} \quad t > t^{*}
\end{equation}
%Since
%$B_{t^{*},\mu}\to B_{t^{*}}$ pointwise\footnote{For this we use again the monotone convergence theorem. note that
%$t^{*}$ ???} as $\mu \to 0$ 
%
%
%Thus we have 
%\begin{equation}\nonumber
%B_{t^{*}_{n}, \mu}(t) \geq B_{t^{*}, \mu}(t) 
%\geq \frac23 \int_{t^{*}}^{t} \| w_{\xi} \|_{L^{q}_{y}}^{r}(\tau) d \tau
%\end{equation}
%for $t>t^{*}$
%and
%$\mu$
%small enough.
and, since $B_{t^{*}_{n}, \mu}(t) \geq B_{t^{*}\!\!\!, \, \mu}(t)$, we can bound 
\begin{align}\nonumber
5 \, II 
& =
\int_{t^{*}_{n}}^{t}
  \|  w_{\xi} (s,\cdot) \|^{r}_{L^{q}(\Rtre)}  \psi_{\mu} (s) ds
   =
\int_{t^{*}_{n}}^{t}
  \|  w_{\xi} (s,\cdot) \|^{r}_{L^{q}(\Rtre)}  e^{-6Z B_{t^{*}_{n}, \mu}(s)}  ds  
  \\  \nonumber
  &   <
  \int_{t^{*}_{n}}^{t^{*}}
  \|  w_{\xi} (s,\cdot) \|^{r}_{L^{q}(\Rtre)}    ds 
  +
  \int_{t^{*}}^{t}
  \|  w_{\xi}(s,\cdot) \|^{r}_{L^{q}(\Rtre)} e^{ - 6 Z \int_{t^{*}}^{s} 
  \|  w_{\xi}(\tau ,\cdot) \|^{r}_{L^{q}(\Rtre)}  d\tau }  ds 
  \\   \nonumber
  &  =  
  \int_{t^{*}_{n}}^{t^{*}}
  \|  w_{\xi}(s,\cdot) \|^{r}_{L^{q}(\Rtre)}    ds 
  - \frac{1}{6Z}
  \left[   e^{-6Z \int_{t^{*}}^{s} \|  w_{\xi}(\tau ,\cdot) \|^{r}_{L^{q}(\Rtre)}  d\tau }   \right]_{s=t^{*}}^{s=t}
    \\   \nonumber
    &  <
    \int_{t^{*}_{n}}^{t^{*}}
  \|  w_{\xi}(s,\cdot) \|^{r}_{L^{q}(\Rtre)}    ds 
  +
  \frac{1}{6Z}.
\end{align}
Thus, since $ \|  w_{\xi}(s,\cdot) \|^{r}_{L^{q}} = \| w(s,\cdot) \|^{r}_{L^{q}}$ is integrable, we can choose  
$t^{*}_{n}$ close enough to $t^{*}$ in such a way that 
\begin{equation}\label{Eq:IIBound}
II \leq \frac{1}{30Z}.
\end{equation}
Plugging the estimates (\ref{Eq:IBound}), (\ref{Eq:IIBound}) into the inequality (\ref{LastInPreq}), we obtain
\begin{equation}\label{eq:LastIneqWithNu}
  \textstyle
  \left( a_{\mu}(t) - \frac{1}{6Z} \right) \psi_{\mu}(t)
  - a_{\mu}(t^{*}_{n})
  +\frac{2}{15Z}
  - |\xi|^{2} t
  \le
  0
\end{equation}
In order to handle the term $a_{\mu}(t^{*}_{n})$
we use
$a_{\mu}(t^{*}_{n}) \le Ze^{Z \mathcal{K} } \epsilon^{2}$, which we have
proved in \eqref{BoundBySmallnessBis}, and 
we assume
\begin{equation}\nonumber%\label{eq:cond1}
  \epsilon \le 1
  \quad\implies\quad
  a_{\mu}(t^{*}_{n}) 
  \le
  Ze^{Z \mathcal{K} } \epsilon.
\end{equation}
In fact we assume that
$\epsilon$ is so small that%\footnote{This smallness condition implies in particular $\epsilon \leq 1$, since $Z \geq 1$.}
\begin{equation}\label{eq:smallep}
  a_{\mu}(t^{*}_{n}) \leq Z e^{Z \mathcal{K} } \epsilon \le \frac{1}{30Z},
\end{equation}
in such a way that \eqref{eq:LastIneqWithNu} gives
\begin{equation}\nonumber
 \textstyle
  \left(  a_{\mu}(t)  - \frac{1}{6Z}  \right)  \psi_{\mu}(t)
  + \frac{1}{10Z} 
  - |\xi|^{2} t
  \le 0 \, ,
\end{equation}
or equivalently (we recall that $\psi_{\mu}(t) =  e^{-6Z B_{t^{*}_{n}, \mu}(t)}$) 
\begin{equation}\label{eq:FinalIneq}
 \textstyle
  a_{\mu}(t) 
  + (\frac{1}{10Z} 
  - |\xi|^{2} t)
  e^{ 6Z B_{t^{*}_{n}, \mu}(t) }
  \le
  \frac{1}{6Z}  \, ,
\end{equation}
%\begin{equation*}
%  a_{\mu}(t)\to a(t):=
%  \int_{\mathbb{R}^{3}} |y|^{-1}|v_{\xi}(t,y)|^{2}dy,
%\end{equation*}
%
%\begin{equation*}
%  B_{t^{*}_{n}, \mu}(t) \to
%  B_{t^{*}_{n}}(t):=
%  \int_{t^{*}_{n}}^{t} \int_{\mathbb{R}^{3}} 
%  |y|^{-1} |\nabla v_{\xi}(s,y)|^{2}dyds
%\end{equation*}
%
%\begin{equation*}
%\psi_{\mu} (t) \to \psi(t) = e^{-6ZB_{t^{*}_{n}}(t)}  
%\end{equation*}
%and we have obtained
%
%
%\begin{equation}
%  \textstyle
%  a(t)
%  +(
%  \frac{1}{10Z}
%  - |\xi|^{2} t)
%  e^{ 6Z B_{t^{*}_{n}}(t) }
%  \le
%  \frac{1}{6Z}.
%\end{equation}
for all $t > t^{*}$.
Let us finally assume 
\begin{equation}\label{eq:condxi2}
  \textstyle
  (\frac{1}{10Z}
    - |\xi|^{2} t)>0
  \quad {\it i.e.} \quad 
  |\xi|^{2}t<\frac{1}{10 Z}.
\end{equation}
Note that this assumption is stronger than 
\eqref{ResrtCondFirst}, namely
$|\xi|^{2}t^{*}\le 1$, since $Z\ge1$ and $t^{*} < t$.
The inequality \eqref{eq:FinalIneq} immediately 
gives 
$$
B_{t^{*}\!\!\!, \, \mu}(t) \leq B_{t^{*}_{n}, \mu}(t) < \infty,
\qquad \mbox{uniformly in $\mu >0$} \, ,
%\frac{1}{6Z} \ln \bigg( \frac{1}{6Z(\frac{1}{10Z} - |\xi|^{2}t )}\bigg) 
$$
for $t > t^{*}$ such that (\ref{eq:condxi2}) holds. Here we mean that $B_{t^{*}_{n}, \mu}(t)$ is bounded by
a constant independent on $\mu > 0$. This constant may become arbitrarily large as~$|\xi|^{2}t$ approaches $\frac{1}{10 Z}$, but this does 
not affect our argument.
%Noting that $B_{t^{*}, \mu}(t) \to B_{t^{*}}(t) := \int_{t^{*}}^{t} \int_{\mathbb{R}^{3}} 
%  |y|^{-1} |\nabla v_{\xi}(s,y)|^{2}dyds $ 
%  pointwise\footnote{The convergence is ensured by the monotonicity with respect to $\mu$ of $\sigma_{\mu}(y)$.} 
%  as $\mu \to 0$, we have proved
%$$
%B_{t^{*}}(t) < \infty.
%$$
On the other hand, recalling~\eqref{Prop:BB(t)BeforeRBar}, we also know that 
$$
B_{\mu}(t^{*}) = \int_{0}^{t^{*}} \int_{\mathbb{R}^{3}}  \sigma_{\mu}(y) |\nabla v_{\xi}(\tau,y)|^{2} dy d\tau  <  \infty  ,
\qquad \mbox{uniformly in $\mu >0$} \, ,
$$
Thus, since
$B_{\mu}(t) = B_{\mu}(t^{*}) + B_{t^{*}\!\!\!, \, \mu}(t)$,
we have proved
$$
B_{\mu}(t) < \infty, \qquad \mbox{uniformly in $\mu >0$}  \, ,
$$
for $t > t^{*}$ such that (\ref{eq:condxi2}) holds.
Since the weights~$\sigma_{\mu}(y)$ are increasing as~$\mu \to 0$, and they converges to~$|y|^{-1}$,  
we can pass to the limit and
come back to the old variables~($t,x$), so that % for all $0\le s \leq t$,
\begin{equation}\label{FinalBound1}
 \lim_{\mu \to 0} B_{\mu}(t) = \int_{0}^{t} \int_{\mathbb{R}^{3}} |y|^{-1}|\nabla v_{\xi}(\tau, y)|^{2} \  dy d \tau     
   = \int_{0}^{t} \int_{\mathbb{R}^{3}}
  \frac{|\nabla v(\tau, x)|^{2}}{|x - \xi \tau|} \ dx d \tau    < \infty \, ,
\end{equation}
provided that \eqref{eq:condxi2} is satisfied.

This implies that the regularity condition (\ref{CKNConditionSB}) is satisfied
at any $(s,\xi s)\in L(t,\xi)$. 
Indeed,
let $0 < s < t $, and let $r > 0$ be so small that  
$0 < s - 7r^{2}/8  < s + r^{2}/8  < t$
and $|\xi| r \leq 1$. For all $(\tau , x) \in Q^{*}_{r}(s, \xi s)$
\begin{equation}\nonumber
  |x - \xi \tau | \le
  |x-\xi s|+|\xi||s-\tau|\le
  r + r^{2} |\xi| \leq 2r,
\end{equation}
from which we deduce
\begin{equation}\label{ContinuityOTIF}
  \frac{1}{r} \int \int_{Q^{*}_{r}(s, \xi s)} 
  |\nabla v(\tau , x)|^{2} \  dx d \tau  
  \le
  2 \int_{s - \frac{7}{8}r^{2}}^{s 
  + \frac{1}{8}r^{2}} \int_{\mathbb{R}^{3}}
  \frac{|\nabla v (\tau, x)|^{2}}{|x - \xi \tau|} \ dx d \tau \, .  
\end{equation}

Using this and (\ref{FinalBound1}) is clear that the quantity on the left hand side 
converges to zero as $r \rightarrow 0$. On the other hand, we already know that 
$$
\limsup_{r \rightarrow 0} \frac{1}{r} \int \int_{Q^{*}_{r}(s, \xi s)} 
|\nabla w (\tau , x)|^{2} \  dx d \tau  < \varepsilon^{*} \, ,
$$
since this is one of the
requirement to be a generalized reference solution.
Thus~$u = v+w$ satisfies the regularity condition (\ref{CKNConditionSB}) at any point
$(s,\xi s)\in L(t,\xi)$ with $(t,\xi)$ satisfying \eqref{eq:condxi2}.
This implies, by Lemma \ref{Lemma:CKNLEmma}, the regularity of $L(t,\xi)$.

\subsection{Conclusion of the proof}

Summing up we have shown that
there exists a 
constant $Z \geq1$, that only depends on~$(r,q)$, such that the following holds:
if $\epsilon$ is sufficiently small enough to satisfy \eqref{eq:smallep},
then the segment $L(t,\xi)$ is a regular set
for $u$, for any $\xi\in \mathbb{R}^{3}$ and $t > 0$ such that \eqref{eq:condxi2} holds.
If we set
\begin{equation*}
  \delta_{0}=\frac{1}{30Z^{2}},
\end{equation*}
then \eqref{eq:smallep} follows by
\begin{equation}\label{eq:smallepdel}
    \epsilon\le \delta_{0} e^{-\mathcal{K}/\delta_{0}}
\end{equation}
and \eqref{eq:condxi2} follows  by
$|\xi|^{2}t< \delta_{0}$, which is equivalent to
$t > \frac{|t \xi|^{2}}{\delta_{0}}$,
namely
\begin{equation}\nonumber%\label{eq:}????
  \textstyle
  (t,t \xi)\in\Pi_{\delta_{0}},
  \qquad
  \Pi_{\delta_{0}}:=
  \left\{ (t,x)\in (0,\infty)\times \mathbb{R}^{3}
  \colon t>\frac{|x|^{2}}{\delta_{0}}  \right\}.
\end{equation}
Thus $L(t, t\xi)$ is regular 
provided that $\epsilon$ satisfies 
\eqref{eq:smallepdel} and $(t, t \xi) \in \Pi_{\delta_{0}}$. 
%belongs
%to the paraboloid $\Pi_{M \delta_{0}}$.
As a consequence, we conclude that the paraboloid $\Pi_{\delta_{0}}$,
that is the union of these segments for arbitrary $t > 0$ and $\xi \in \Rtre $,  
is a regular 
set for $u$
provided that \eqref{eq:smallepdel}, namely our smallness assumption~\eqref{MTSAMoreGen}, holds.
This concludes the proof.

\end{proof}

Here we prove that reference solutions are generalized reference solutions.

\begin{lemma}\label{Lemma:Postposto}
%Let $x' \in \Rtre$. Let  $w_{0} \in L^{2}(\Rtre) \cap L^{2}(|x - x'|^{-1}dx)$ be a divergence free vector field.
%If $w \in L^{\infty}_{t}L^{2}_{x} \cap L^{2}_{t}\dot{H}^{1}_{x}$ 
%is a weak solution of Problem \ref{CauchyNS} with initial data $w_{0}$
%satisfying
%\begin{equation}\label{AprEstim}
%\int_{0}^{t} \| w \|_{L^{q}_{x}}^{r} = \mathcal{K} < \infty,
%\end{equation}
%for an admissible couple $(r,q)$, then 
%\begin{equation}\label{SuitCondLimitZero}
%\limsup_{r \rightarrow 0} \frac{1}{r}
%  \int\int_{Q^{*}_{r}(t,x)} |\nabla w|^{2} = 0,
%\end{equation}
%for all $(t,x) \in (0,\infty) \times \Rtre$,
%where $Q^{*}_{r}(t,x)$ is the parabolic cylinder defined in (\ref{Def:SPCparabolic}).
%In other words $w$ is a reference solution of size $\mathcal{K}$.
If~$w$ is a reference solution of size~$\mathcal{K}$ to the Navier--Stokes equation (see Definition~\ref{Def:RefSol}), then
it is also a generalized reference solution of size~$\mathcal{K}$ (see Definition~\ref{Def:GenRefSol}).  
\end{lemma}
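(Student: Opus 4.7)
The plan is to verify, one by one, each requirement in Definition~\ref{Def:GenRefSol} for a reference solution $w$ of size $\mathcal{K}$ (Definition~\ref{Def:RefSol}), leaning heavily on the content of Remark~\ref{Rem:Uniqueness}. The condition \eqref{Def:KRefProp} is built into the definition of reference solution, and $w_0 \in L^2 \subset L^2_{loc}(\RT)$ is divergence free by assumption, so these are free.

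For \emph{suitability}, I would invoke the Prodi--Serrin uniqueness result (see \cite{Prodi}, \cite{Ser2} and Remark~\ref{Rem:Uniqueness}): since $w \in L^r_t L^q_x$ with $(r,q)$ admissible, $w$ is unique in the class of weak solutions satisfying the relative energy inequality, and must therefore coincide with a solution produced by the Leray approximation procedure. By \cite[Theorem 2.3]{S2} (or \cite[Proposition 30.1(A)]{Lem}) such a solution is suitable, so $w$ is too. For the \emph{continuity} requirement $w \in C_b([0,\infty); L^2(K))$ for every compact $K$, I would combine the energy identity $\|w(t)\|_{L^2}^2 + \int_0^t \|\nabla w\|_{L^2}^2 = \|w_0\|_{L^2}^2$ (also from Remark~\ref{Rem:Uniqueness}) with the weak $L^2$ continuity of suitable weak solutions \cite[pp.~281--282]{Temam} to upgrade weak continuity to strong $L^2(\RT)$ continuity, which of course restricts to continuity in $L^2(K)$.

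For the \emph{regularity condition} \eqref{CKNConditionSB} at every point $(t,x) \in (0,\infty) \times \RT$, one can either invoke the Prodi--Serrin/Fabes--Jones--Rivi\`ere/Giga smoothness of $w$ for $t>0$ (see \cite{Fabes}, \cite{Giga}), which directly makes the limit in \eqref{CKNConditionSB} vanish, or, more in line with what the rest of the paper uses, quote Lemma~\ref{Lemma:Postposto}'s own later justification via the absolute continuity of $\int_0^t \|w\|_{L^q_x}^r \, d\tau$ coupled with the improved integrability coming from $w \in L^\infty_t L^2_x \cap L^2_t \dot H^1_x$ as recorded in \eqref{UIP}. Either route is routine.

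The main point that requires a few words is the \emph{pressure representation}. Since $w \in L^\infty_t L^2_x \cap L^2_t \dot H^1_x$, by \eqref{UIP} we have $w \otimes w \in L^{3/2}_{loc}$ at almost every time, so equation \eqref{RecoverP} yields $P = \mathcal{R}\otimes\mathcal{R}\cdot(w\otimes w)$ in the sense of distributions. One then sets $\mathcal{T} := \mathcal{R}\otimes\mathcal{R}$, which is a $3\times 3$ symmetric matrix of compositions of Riesz transforms. The required weighted bounds on $L^p(|x|^{\alpha p} dx)$ for $1<p<\infty$ and $-3/p < \alpha < 3 - 3/p$ then follow from the classical Muckenhoupt/Stein weighted estimates for Calder\'on--Zygmund operators \cite{Stein}, since the weight $|x|^{\alpha p}$ lies in the Muckenhoupt class $A_p$ exactly in the stated range of $\alpha$. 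I expect this weighted-boundedness step, together with checking that the exponents match Definition~\ref{Def:GenRefSol} verbatim, to be the only non-cosmetic part of the argument; everything else is a direct invocation of Remark~\ref{Rem:Uniqueness}.
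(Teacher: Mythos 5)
Your proposal is correct, but it diverges from the paper on the one step that the paper's proof is actually devoted to. The paper dismisses suitability, the energy identity, the $C_b([0,\infty);L^2)$ continuity and the pressure representation in one line (``Recalling Remark~\ref{Rem:Uniqueness} and~\eqref{RecoverP}, we only need to show that the regularity condition~\eqref{CKNConditionSB} is satisfied''), exactly as you do, and then spends the entire proof establishing \eqref{CKNConditionSB} by a self-contained weighted local energy argument: after the Galilean change of variables $(t,y)=(t,x-\xi t)$ it tests the local energy inequality against $\sigma_\mu(y)\chi(\delta|y|)$, runs the Stein and Caffarelli--Kohn--Nirenberg inequalities and Gr\"onwall to get a $\mu$-uniform bound on $\int_0^t\!\int \sigma_\mu|\nabla w_\xi|^2$, and passes to the limit $\mu\to0$ to obtain \eqref{FinalBound2}, i.e.\ $\int_0^t\!\int |\nabla w(\tau,x)|^2\,|x-\xi\tau|^{-1}\,dx\,d\tau<\infty$, from which \eqref{CKNConditionSB} follows by the parabolic-cylinder estimate \eqref{ContinuityOTIF}. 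This is where the hypothesis $w_0\in L^2(|x-x'|^{-1}dx)$ of Definition~\ref{Def:RefSol} is consumed (it makes $a(0)$ finite), and it is essentially a stripped-down rehearsal of the Second Step of the proof of Theorem~\ref{MainThmPerturbationMoreGen}. Your first route --- Prodi--Serrin/Serrin smoothness of $w$ for $t>0$, so that $\tfrac1r\iint_{Q_r^*}|\nabla w|^2\lesssim r^4\to0$ --- is valid, does not need the weighted assumption on $w_0$, and is precisely the ``more direct argument'' the paper points to via \cite{Fabes}, \cite{Giga} in Remark~\ref{Rem:Uniqueness}; what it does not give you is the quantitative weighted bound \eqref{FinalBound2}, which the paper's argument produces as a byproduct and which is of the same flavour as what is needed elsewhere. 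Your second suggested route for \eqref{CKNConditionSB} is too vague to stand on its own (``quote the lemma's own later justification'' is circular as phrased), so you should commit to the smoothness route or reproduce the weighted energy estimate; with that caveat, the remaining verifications (Muckenhoupt range $-3/p<\alpha<3-3/p$ for $|x|^{\alpha p}$, upgrade of weak to strong $L^2$ continuity via the energy identity) match the paper's intent.
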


\begin{proof}

Recalling Remark~\ref{Rem:Uniqueness} and~\eqref{RecoverP}, we only 
need to show that the regularity 
condition~\eqref{CKNConditionSB} is satisfied at any~$(t,x)\in (0,\infty)\times \Rtre$.
%Indeed, we will prove 
%\begin{equation}
%\limsup_{r \rightarrow 0} \frac{1}{r}
%\int\int_{Q^{*}_{r}(t,x)} |\nabla w|^{2} =0 , \qquad \forall (t,x) \in (0,\infty) \times \Rtre \, .
%\end{equation}
Recalling the argument at the end of the previous proof (after the inequality~(\ref{FinalBound1})) it 
suffices to prove
\begin{equation}\label{FinalBound2}
 %\textstyle
  \int_{0}^{t} \int_{\mathbb{R}^{3}}
  \frac{| \nabla w (\tau ,x)|^{2} }{|x - \xi \tau |} 
  \ dx d \tau   < \infty, 
\qquad   \mbox{for all $t > 0$ and $\xi \in \Rtre$}
\end{equation}

%and for any weak solution $w \in  L^{\infty}_{t}L^{2}_{x} \cap L^{2}_{t}\dot{H}^{1}_{x} \cap L^{r}_{t}L^{q}_{x}$ (with $(r,q)$ admissible) 
By translation invariance we can assume $x' = 0$, namely $w_{0} \in L^{2}(\Rtre) \cap L^{2}(|x|^{-1}dx)$. 
We change variables
\begin{equation}\nonumber
  (t, y) = (t, x - \xi t),
  \quad
    w_{\xi}(t,y) = w(t,x) \, ,
\end{equation}
so that the local energy inequality becomes
\begin{equation}\label{eq:energyw}
  \begin{split}
    \textstyle
    \int_{\mathbb{R}^{3}} 
    |w_{\xi}|^{2}  
    &
    \textstyle
     \phi (t,y) dy + 
    2 \int_{0}^{t}\int_{\mathbb{R}^{3}}  |\nabla  w_{\xi}|^{2} \phi 
    \le
    \int_{\mathbb{R}^{3}}  |w_{0}|^{2} \phi(0,y) dy \,
    +
    \\
    & 
    +
    \textstyle
      \int_{0}^{t} \int_{\mathbb{R}^{3}} 
    |w_{\xi}|^{2} (\phi_{t} - \xi \cdot \nabla \phi + \Delta \phi)
   +
    (|w_{\xi}|^{2} + 
    2P_{w_{\xi}})w_{\xi} \cdot \nabla \phi \, ,
  \end{split}
\end{equation}
where~$P_{w_{\xi}} = \mathcal{R} \otimes \mathcal{R} \cdot (w_{\xi} \otimes w_{\xi})$. We choose
$\phi(t,y):= \sigma_{\mu}(y) \chi(\delta |y|)$
where~$\sigma_{\mu}$ and~$\chi$
are as in the second step of the previous proof.
Exactly as before, taking the limit~$\delta \rightarrow 0$
and, using the inequalities (\ref{eq:sigmapr}),
(\ref{eq:energyw}) becomes
\begin{equation}\label{r.h.s.Easy}
  \textstyle
  \left[ \int_{\mathbb{R}^{3}} \sigma_{\mu} 
  |w_{\xi}|^{2} \right]_{0}^{t} + 
  2\int_{0}^{t}\int_{\mathbb{R}^{3}} 
  \sigma_{\mu} |\nabla w_{\xi}|^{2} 
  \le
  |\xi|\int_{0}^{t}\int_{\mathbb{R}^{3}}  
  \sigma_{\mu}^{2} |w_{\xi}|^{2}  
   +  \int_{0}^{t}\int_{\mathbb{R}^{3}} \sigma_{\mu}^{2}
   (|w_{\xi}|^{3} + 2|P_{w_{\xi}}||w_{\xi}| ).
\end{equation}
We desire an integral inequality
for the functions 
\begin{equation}\nonumber
a_{\mu}(t) = \int_{\mathbb{R}^{3}} \sigma_{\mu}(y) |w_{\xi}(t,y)|^{2} dy, 
\qquad
B_{\mu}(t) = \int_{0}^{t}\int_{\mathbb{R}^{3}} 
\sigma_{\mu}(y) | \nabla w_{\xi}(\tau,y)|^{2}dy d\tau .
\end{equation}

To bound the pressure term we use the 
SS and CKN inequalities, the last one with $2\theta = 1+3/q$, that implies $r=1/(1-\theta)$, so that
\begin{eqnarray}\nonumber
  \textstyle
  2 \int_{\mathbb{R}^{3}} \sigma_{\mu}^{2}
  |P_{w_{\xi}}||w_{\xi}| 
  & = & 
  \textstyle
  2 \int_{\mathbb{R}^{3}} \sigma_{\mu}^{2}
  |w_{\xi}| | \mathcal{R} \otimes \mathcal{R} \cdot  
  ( w_{\xi} \otimes w_{\xi} ) |
  \\ \nonumber
  & \leq  & 
  \| \sigma_{\mu}
  \mathcal{R} \otimes \mathcal{R} \cdot  
  ( w_{\xi} \otimes w_{\xi} )   \|_{L^{2q/(q+1)}}
  \| \sigma_{\mu}
  w_{\xi} \|_{L^{2q/(q-1)}}
  \\ \nonumber
  & \lesssim &
  \| \sigma_{\mu}
  | w_{\xi} |^{2}   \|_{L^{2q/(q+1)}}
  \| \sigma_{\mu}
  w_{\xi} \|_{L^{2q/(q-1)}}
  \\ \nonumber
  & \leq &
  \|  w_{\xi}    \|_{L^{q}}
  \| \sigma_{\mu}
  w_{\xi} \|_{L^{2q/(q-1)}}^{2}
  \\ \nonumber
  & \lesssim &
  \|  w_{\xi}    \|_{L^{q}}
  \| \sigma_{\mu}^{1/2}
  \nabla w_{\xi} \|_{L^{2}}^{2\theta}
  \| \sigma_{\mu}^{1/2}
  w_{\xi} \|_{L^{2}}^{2(1-\theta)}
  \\ \label{Appendx:FirstIn}
  & = &
  \|  w_{\xi}    \|_{L^{q}}
  \dot{B}^{\theta}_{\mu}a_{\mu}^{1-\theta}
  \le
  \textstyle
  \frac13  \dot{B}_{\mu}  
   +  C \| w_{\xi} \|_{L^{q}}^{r}  a_{\mu} .
\end{eqnarray} 
%for some universal constant $C$.come dipende C da \theta ??? 
In a similar way, using CKN with $2 \theta = 1 + 3/q$
\begin{eqnarray}
  \textstyle
  \int_{\mathbb{R}^{3}} \sigma_{\mu}^{2}
  |w_{\xi}|^{3}
  & \leq &
  \| w_{\xi} \|_{L^{q}}
  \| \sigma_{\mu}^{2}  |w_{\xi}|^{2} \|_{L^{q/(q-1)}}
  \\ \nonumber
  &  =  &
  \| w_{\xi} \|_{L^{q}}
  \| \sigma_{\mu}  w_{\xi}  \|_{L^{2q/(q-1)}}^{2}
  \\ \nonumber
  & \lesssim &
  \|  w_{\xi}    \|_{L^{q}}
  \| \sigma_{\mu}^{1/2}
  \nabla w_{\xi} \|_{L^{2}}^{2\theta}
  \| \sigma_{\mu}^{1/2}
  w_{\xi} \|_{L^{2}}^{2(1-\theta)}
  \\ \label{Appendx:SecondIn}
  &  \le  &
  \textstyle
  \frac{1}{3}  \dot{B}_{\mu}  
  +  C \| w_{\xi} \|_{L^{q}}^{r}  a_{\mu} \, ,
\end{eqnarray}
and, again by CKN, 
\begin{equation}\nonumber%\label{Appendx:ThirdIn}
  \textstyle
  |\xi| \int_{\RT} \sigma_{\mu}^{2} |w_{\xi}|^{2} 
  \lesssim 
  |\xi|\cdot  \| \sigma_{\mu}^{1/2} 
  \nabla w_{\xi}  \|_{L^{2}}
  \| \sigma_{\mu}^{1/2} w_{\xi} \|_{L^{2}} 
  =
  |\xi|   (\dot{B}_{\mu} a_{\mu})^{1/2} 
  \leq \frac13   \dot{B}_{\mu} + C |\xi|^{2} a_{\mu} .
\end{equation}
Using these, the inequality~(\ref{r.h.s.Easy})
becomes
\begin{equation}\nonumber%\label{Appendix:FinalINeq}
 \textstyle
  a_{\mu}(t) + B_{\mu}(t) \leq 
  a_{\mu}(0) +
  C \int_{0}^{t}  \left( 
   |\xi|^{2} + 
  % 2 C\dot{B}_{\mu}(s) +  
  3  \|  w_{\xi}(\tau,\cdot)  \|^{r}_{L^{q}(\Rtre)}
  \right) a_{\mu}(\tau) \ d\tau ,
\end{equation}
for some $C>0$. 

Letting $a(t) := \int_{\mathbb{R}^{3}} |y|^{-1} |w_{\xi}(t,y)|^{2} dy$ and using 
$a_{\mu}(0) \leq a(0)$,
we arrive to
\begin{equation}\label{AForB}
 \textstyle
  a_{\mu}(t) + B_{\mu}(t) \leq 
  a(0)  +
  C\int_{0}^{t}  \left( 
  |\xi|^{2} + 
   \|  w_{\xi} (\tau,\cdot) \|^{r}_{L^{q}(\Rtre)}
  \right) a_{\mu}(\tau) \ d\tau \, .
\end{equation}
Since $a(0) = \| |x|^{- 1/2} w_{0} \|^{2}_{L^{2}_{x}}$
and~$\|w_{\xi} \|_{L^{r}_{t}L^{q}_{y}}^{r} = \|w\|_{L^{r}_{t}L^{q}_{x}}^{r} =: \mathcal{K}$,  
the Gr\"onwall inequality gives
$$
a_{\mu}(t) < a(0)e^{C(t|\xi|^{2} + \mathcal{K})} \quad 
\mbox{for all}
\quad t > 0 \, .
$$ 
 Plugging this into the right hand side of~(\ref{AForB}), since~$\|w_{\xi}\|_{L^{r}_{t}L^{q}_{x}}^{r} =: \mathcal{K}$, we
find out that~$B_{\mu}(t)$ is bounded, for any time~$t>0$, uniformly in~$\mu$. 
Thus, since the 
weights~$\sigma_{\mu}(y)$ are increasing as~$\mu \to 0$, we can pass to the limit
\begin{equation}\nonumber
\lim_{\mu \to 0}  B_{\mu}(t) = \int_{0}^{t} \int_{\mathbb{R}^{3}} |y|^{-1} 
  |\nabla w_{\xi}(\tau,y)|^{2} \ dyd\tau < \infty, \quad \mbox{for all} \quad t > 0.
\end{equation}
In particular, coming back to the $(t,x)$ variables, we see that \eqref{FinalBound2} is satisfied.
This concludes the proof.

\end{proof}

\section{Proof of Propositions~\ref{prop:Axysimm}\,-\,\ref{Prop:Beltr} }\label{Sec:ApplicProof}

\subsection*{Proof of Proposition \ref{prop:Axysimm}}

Let $w$ be the solution to the Navier--Stokes equation constructed in~\cite{Leon}, for the 
zero swirl initial data~$w_0$.
%By the estimates (1.5), (1.12) in \cite{Yud} it follows that 
This solution 
satisfies the
energy inequality~(\ref{ClassicEnergy}), in particular it
belongs to~$L^{\infty}_{t}L^{2}_{x} \cap L^{2}_{t}\dot{H}^{1}_{x}$, and moreover 
\begin{equation}\label{LeonLemma}
\| w \|_{L^{\infty}_{t}\dot{H}^{1}_{x}} \lesssim  1 + \| w_{0} \|^{5/3}_{H^{2}_{x}}
\end{equation}
%\begin{equation} 
%w \in L^{\infty}_{t}H^{1}_{x}
%\end{equation}
Indeed, as we will show at the end of the proof, this can be deduced following the proof of Lemma~5 in~\cite{Leon}.  
%(see also the proof of the part (2) of Theorem 3 in \cite{Ponce}).
%Moreover, as shown in \cite{Yud}, $w$ belongs to . 
Thus $w \in L^{4}_{t}\dot{H}^{1}_{x}$, by interpolation, and 
$w \in L^{4}_{t}L^{6}_{x}$, by Sobolev embedding.
More precisely 
\begin{equation}
\| w \|_{L^{4}_{t}L^{6}_{x}} \lesssim  1+ \| w_{0} \|^{4/3}_{H^{2}_{x}} \, .
\end{equation}
Since~$(r,q)=(4,6)$ is an admissible couple, we have shown that~$w$ is 
a reference solution of size~$\mathcal{K} \leq \mathcal{C} ( 1 + \| w_{0} \|^{16/3}_{H^{2}}$), for some 
absolute constant~$\mathcal{C} > 1$; see Definition~\ref{Def:RefSol}. 
The statement then follows by 
Theorem~\ref{MainThmPerturbation},
taking $\delta_{2} = \mathcal{C}^{-1} \delta_{0}$.

It remains to prove (\ref{LeonLemma}).
First of all we notice that, since $\nabla \cdot w = 0 $, we have, by orthogonality, 
\begin{equation}\label{Curl=Nabla}
\| \nabla w \|_{L^{2}_{x}} = \| \nabla \times w \|_{L^{2}_{x}} \, .
\end{equation} 
Thus, we can equivalently show
\begin{equation}\label{LeonLemmaCurl}
\| \nabla \times w \|_{L^{\infty}_{t}L^{2}_{x}} \lesssim  1+ \| w_{0} \|^{5/3}_{H^{2}_{x}}
\end{equation}
Following the proof of Lemma 5  in~\cite{Leon},
we have  
\begin{align}\nonumber
\| \nabla \times w \|^{2}_{L^{2}_{x}}(t) & + k \int_{0}^{t} \| \nabla^{2} w \|^{2}_{L^{2}_{x}} (s) \ d s
\\
&
\leq \| \nabla \times w_{0} \|^{2}_{L^{2}_{x}}  +
\int_{0}^{t} \|  w \|_{L^{\infty}_{x}} \Big\| \frac{\nabla \times w}{\mathbf{r}}\Big\|_{L^{2}_{x}} \| \nabla \times w \|_{L^{2}_{x}}(s) \ d s \, ,
\end{align}
for a certain constant $k > 0$. We recall that $\mathbf{r}$ is the radial variables in a cylindrical polar coordinate system; see~\eqref{Def:AxySimmVF}. 
Thus, using the 
inequality~$\| w \|_{L^{\infty}(\Rtre)} \lesssim \| \nabla w \|_{L^{2}(\Rtre)}^{1/2} \| \nabla^{2} w \|_{L^{2}(\Rtre)}^{1/2}$, $\eqref{Curl=Nabla}$,
and the Young inequality,
\begin{align}\label{FinalLeonardi}
& 
\| \nabla \times w \|^{2}_{L^{2}_{x}}(t)
 +  
k \int_{0}^{t} \| \nabla^{2} w \|^{2}_{L^{2}_{x}}(s) \ d s
\\ \nonumber
& \leq  \| \nabla \times w_{0} \|^{2}_{L^{2}_{x}} +
\int_{0}^{t} %\| \nabla w \|_{L^{2}_{x}}^{1/2} \, 
\| \nabla^{2} w \|_{L^{2}_{x}}^{1/2} \,  
\Big\| \frac{\nabla \times w}{\mathbf{r}}\Big\|_{L^{2}_{x}}  
\, 
\| \nabla  w \|^{3/2}_{L^{2}_{x}}(s) d s.
\\ \nonumber
&  \leq \| \nabla \times w_{0} \|^{2}_{L^{2}_{x}} +
\frac{k}{2} \int_{0}^{t} \| \nabla^{2} w \|^{2}_{L^{2}_{x}} (s) \ d s 
+
C \, \Big\| \frac{\nabla \times w}{\mathbf{r}}\Big\|^{4/3}_{L^{\infty}_{t}L^{2}_{x}}
\int_{0}^{t}   \| \nabla  w \|^{2}_{L^{2}_{x}} (s) \ d s \, ,
\end{align}
for some $C$ that only depends on $k$.
Since $\int_{0}^{t}   \| \nabla w \|^{2}_{L^{2}_{x}}(s)ds \leq \| w_{0} \|^{2}_{L^{2}_{x}}$ and, by~\cite[Lemma 1.2]{Yud}, \cite[Lemma 3(ii)]{Leon},  
$$
\Big\| \frac{\nabla \times w}{\mathbf{r}}\Big\|_{L^{\infty}_{t}L^{2}_{x}}
\leq
\Big\| \frac{\nabla \times w_{0}}{\mathbf{r}}\Big\|_{L^{2}_{x}}
 \lesssim
 \| w_{0} \|_{H^{2}_{x}}
 \, ,
 $$
\eqref{FinalLeonardi} 
implies~(\ref{LeonLemmaCurl}) and the proof is concluded.

 \hfill $\Box$

\subsection*{Proof of Proposition~\ref{prop:CGPert} }

Recaling Theorem 1 in \cite{ChemGall}, given $\sigma$ sufficiently small in the assumption~\eqref{GallChemSmallnessCond}, there exists a unique solution~$w$ to the Navier--Stokes equation 
with initial data $w_0$. 
As the authors point out, in order to prove this fact it is actually sufficient assume~$w_{0} \in \dot{B}_{\infty,2}^{-1}(\Rtre)$.
On the other hand, the extra assumption~$w_{0} \in \dot{H}^{1/2}(\Rtre)$ ensures that~$w$
belongs to~$\in C_{b}([0,\infty); \dot{H}^{1/2}_{x})$, via a standard propagation of regularity argument; see~\cite[Theorem 18.3]{Lem}.
This also implies~$w \in L^{2}_{t}\dot{H}^{3/2}_{x}$; see~\cite[Theorem 2]{GIP2}. Thus, by interpolation, we see that $w$
belongs to~$\in L^{r}_{t}\dot{H}^{s}_{x}$
provided~$r \geq 2$ and $s = 1/2 + 2/r$ and, by Sobolev embedding, we have~$w \in L^{r}_{t}L^{q}_{x}$ 
for every admissible couple $(r,q)$ provided $q \neq \infty$. 
(we recall that $(r,q)$ is admissible when $2/r+3/q =1$).
Since we have required $w_{0} \in L^{2}(\Rtre)$, we also have $w \in C_{b}([0,\infty); L^{2}(\Rtre))$, again by the propagation of regularity argument.   
%Finally, since we are under the hypothesis of Lemma \ref{Lemma:Postposto} the assumption (\ref{SuitCondLimitZero}) is satisfied, so that
In conclusion, $w$ is a reference solution of size~$\mathcal{K} = \| w \|^{r}_{L^{r}_{t}L^{q}_{x}}$, see Definition~\ref{Def:RefSol}, and the 
statement follows by 
Theorem~\ref{MainThmPerturbation}.

 \hfill $\Box$

\subsection*{Proof of Proposition~\ref{Prop:2DNew}}

The statement follows combining the forthcoming Propositions~\ref{prop:2dPreq} and~\ref{InfiniteExistenceNew}. 
%It is well known that for any 2D divergence free vector field $w_{0} \in L^{2}(\mathbb{R}^{2})$ 
%the 2D 
%Navier--Stokes equation admits a unique solution 
%SERVE ?????
%the energy inequality
%$$
%\textstyle
%\| w(t) \|^{2}_{L^{2}(\mathbb{R}^{2})} + \int_{0}^{t} \| \nabla w \|^{2}_{L^{2}(\mathbb{R}^{2})}(s)ds \leq \|w_{0}\|^{2}_{L^{2}(\mathbb{R}^{2})}.
%$$

\begin{proposition}\label{prop:2dPreq}
There exists a constant $\delta_{0} >0$ such that the following holds.
Let $W_0$ be a 2D divergence free vector field which belongs to $L^{2}(\mathbb{R}^{2}) \cap L^{1}(\mathbb{R}^{2})$ 
and let $W$ be
the (unique) solution to
the 2D Navier--Stokes equation with initial data $W_{0}$.
%and $w$ a solution of Problem \ref{CauchyNS} with initial data $w_0$.
The set $\Pi_{\delta_{0},\bar{x}}$
is regular for
any suitable weak solution $u$ to the 3D Navier--Stokes equation, with divergence free initial data~$u_{0} \in L^{2}_{loc}(\Rtre)$,
such that: 
\begin{enumerate}
\item $u_{0} - \widetilde{W_{0}} \in L^{2}(\Rtre)$;
\item $u - \widetilde{W} \in L^{\infty}_{t}L^{2}(\Rtre) \cap L^{2}_{t}\dot{H}^{1}(\Rtre)$ is a suitable weak solution to the 
perturbed 3D Navier--Stokes equation~\eqref{ModProbInPrelimin}, around the solution~$\widetilde{W}$, 
with data $u_{0} - \widetilde{W_{0}}$;
\end{enumerate}
and
\begin{equation}\nonumber
    \| |x-\bar{x}|^{-1/2} (u_{0} - \widetilde{W_{0}})\|_{L^{2}(\Rtre)}\le \delta_{0} e^{- \delta_{0}^{-1} \| W \|^{2}_{L^{2}_{t}L^{\infty}(\mathbb{R}^{2})}}.
\end{equation}
\end{proposition}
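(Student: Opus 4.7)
The plan is to reduce Proposition~\ref{prop:2dPreq} directly to Theorem~\ref{MainThmPerturbationMoreGen} by identifying the 3D extension $\widetilde{W}$ as a generalized reference solution (Definition~\ref{Def:GenRefSol}) of size $\mathcal{K} := \|W\|_{L^2_t L^\infty(\mathbb{R}^2)}^2 = \|\widetilde{W}\|_{L^2_t L^\infty(\mathbb{R}^3)}^2$, corresponding to the admissible couple $(r,q)=(2,\infty)$. All the remaining hypotheses of Theorem~\ref{MainThmPerturbationMoreGen} on $u$ and on the perturbation $u_0 - \widetilde{W_0}$ are already built into the statement of the Proposition: $u - \widetilde{W}$ is a suitable weak solution of the perturbed equation around $\widetilde{W}$, with data in $L^2(\Rtre)$ satisfying the requisite weighted smallness bound. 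Once the identification of $\widetilde{W}$ as a generalized reference solution is made, the regularity of $\Pi_{\delta_0, \bar{x}}$ for $u$ is an immediate consequence.

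To verify Definition~\ref{Def:GenRefSol} for $\widetilde{W}$, I would proceed in the following order. \textbf{(i)} Direct substitution shows that $\widetilde{W}$ solves the 3D Navier--Stokes equation in the distributional sense with pressure $P_{\widetilde{W}} = \widetilde{P_W}$; the 2D local energy identity for $W$, tested against 3D smooth functions and integrated in $x_3$, yields the 3D local energy inequality, so $\widetilde{W}$ is suitable. \textbf{(ii)} By standard 2D parabolic regularity, $W$, hence $\widetilde{W}$, is smooth on $(0,\infty)\times\mathbb{R}^3$, so the Caffarelli--Kohn--Nirenberg condition~\eqref{CKNConditionSB} is trivially satisfied at every point, the left-hand side of~\eqref{CKNConditionSB} scaling like $r^4$. \textbf{(iii)} The continuity $\widetilde{W}\in C_b([0,\infty);L^2(K))$ for compact $K\subset\mathbb{R}^3$ follows by Fubini from the 2D statement $W\in C_b([0,\infty);L^2(\mathbb{R}^2))$. \textbf{(iv)} The bound on $\|W\|_{L^2_t L^\infty(\mathbb{R}^2)}$ is obtained from classical 2D Navier--Stokes theory by splitting the time integral, combining parabolic smoothing of the $L^2$ data for small $t$ with the $L^1$-based heat kernel decay $\|W(t)\|_{L^\infty}\lesssim t^{-1}$ for large $t$. \textbf{(v)} For the pressure representation one takes $\mathcal{T} := \mathcal{R}\otimes\mathcal{R}$, the 3D Riesz-transform matrix, using that on $x_3$-invariant tempered distributions the 3D Riesz multiplier $-\xi_j\xi_k/|\xi|^2$ restricts to the 2D Riesz multiplier $-\xi_j\xi_k/(\xi_1^2+\xi_2^2)$, so that $\mathcal{T}(\widetilde{W}\otimes\widetilde{W}) = \widetilde{(\mathcal{R}_2\otimes\mathcal{R}_2)(W\otimes W)} = \widetilde{P_W}$, while the weighted $L^p$ bounds required in Definition~\ref{Def:GenRefSol} are the classical Stein estimates.

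The main obstacle I anticipate is in making step~\textbf{(v)} rigorous within the weighted $L^p$ framework: the $x_3$-invariance of $\widetilde{W}$ prevents $\widetilde{W}\otimes\widetilde{W}$ from belonging to any unweighted $L^p(\mathbb{R}^3)$, so the identity has to be interpreted distributionally, with the weights $|x|^{\alpha p}$ supplying the decay in $x_3$ needed to make the boundedness statement meaningful on the relevant inputs arising in the proof of Theorem~\ref{MainThmPerturbationMoreGen}. A secondary technical issue is the control of $\|W\|_{L^2_t L^\infty(\mathbb{R}^2)}$ near $t=0$: the naive heat-semigroup bound $\|W(t)\|_{L^\infty}\lesssim t^{-1/2}\|W_0\|_{L^2}$ is not square-integrable at the origin, and one needs either a refined propagation-of-regularity argument for $W_0 \in L^1\cap L^2$ or to view the Proposition as vacuous when the exponential factor on the right-hand side of the smallness bound is zero.
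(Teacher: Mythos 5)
Your proposal is correct and follows essentially the same route as the paper: one verifies that $\widetilde{W}$ is a generalized reference solution of size $\mathcal{K}=\|W\|^{2}_{L^{2}_{t}L^{\infty}(\mathbb{R}^{2})}$ for the admissible couple $(r,q)=(2,\infty)$ (suitability and $C_b([0,\infty);L^2(K))$-continuity by integrating the 2D statements in $x_{3}$, the pressure via the restriction of the 3D Riesz multiplier to $x_{3}$-invariant functions, and the regularity condition~\eqref{CKNConditionSB} from the space-time integrability of $|\nabla W|^{2}$ rather than from smoothness, though both work) and then invokes Theorem~\ref{MainThmPerturbationMoreGen}. The one point you flag as delicate, the membership $W\in L^{2}_{t}L^{\infty}(\mathbb{R}^{2})$ for $W_{0}\in L^{1}\cap L^{2}$, is precisely where the $L^{1}$ hypothesis enters; the paper does not reprove it but cites the proof of Theorem~4 in~\cite{Ponce}.
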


It is not immediately clear that suitable weak solutions which satisfy the condition (2) of the statement actually exist.  
In Proposition~\ref{InfiniteExistenceNew} we will prove that this is indeed
the case, for any initial datum $u_{0}\in L^{2}_{loc}(\Rtre)$, as long as (1) is 
satisfied and~$\widetilde{W} \in L^{2}_{t}L^{\infty}_{x}$; see~\eqref{TIITC}.  
Thus, this Proposition has to be considered the completion of Proposition \ref{prop:2dPreq} and, for the same reason, of the 
forthcoming Proposition~\ref{Prop:BeltrPreq}.

\subsubsection*{Proof of Proposition \ref{prop:2dPreq}}
The 2D solution $W$ belongs to~$L^{\infty}_{t}L^{2}(\mathbb{R}^{2}) \cap L^{2}_{t}\dot{H}^{1}(\mathbb{R}^{2})$.
This is a well know fact, indeed it is unique in the class of 
solutions which satisfies the 2D energy inequality, and thus it is, in particular, suitable.
Moreover, as observed in the proof of Theorem 4 in~\cite{Ponce}, $W$ also belongs to~$\in L^{2}_{t}L^{\infty}(\mathbb{R}^{2})$.
By the definition of $\widetilde{W}$, we immediately have 
\begin{equation}\label{TIITC}
\widetilde{W} \in L^{2}_{t}L^{\infty}(\mathbb{R}^{3}) \cap L^{\infty}_{t}L^{2}(K) \cap L^{2}_{t}\dot{H}^{1}(K) \, , 
\end{equation}
for any compact set $K \subset \Rtre$.
%In particular
%\begin{equation}
%\| \widetilde{W}\|_{L^{2}_{t}L^{\infty}(\mathbb{R}^{3})}  \leq \mathcal{C} \| W_{0} \|_{L^{2}(\mathbb{R}^{2})},
%\end{equation}
%for some $\mathcal{C} \geq 1$.
Since~$(r,q)=(2,\infty)$ is admissible (namely $2/r+3/q =1$), in order to show that~$\widetilde{W}$ is a generalized reference 
solution (see Definition~$\ref{Def:GenRefSol}$), we need to 
check that it is suitable, that satisfies the regularity condition~$\eqref{CKNConditionSB}$ at any $(t,x) \in (0,\infty)\times \Rtre$, that
$t \to \widetilde{W}(t) \in C_{b}([0,\infty); L^{2}(K))$, $K \subset \Rtre$ compact, and that the representation formula for the pressure is satisfied. 
The suitability follows straightforwardly by that of~$W$. We omit the obvious details. We only notice that  
given any 
$\phi \in C^{\infty}_{c}( (0,\infty) \times \mathbb{R}^{3})$,
we can define a family of 2D functions $\phi_{x_{3}}(x_{1},x_{2}) := \phi(x_{1},x_{2},x_{3})$, so that $W$ solves the 2D 
Navier--stokes equation, in the 
weak sense, when we test against $\phi_{x_{3}}(x_{1},x_{2})$, and similarly, 
given $0 \leq \phi \in C^{\infty}_{c}( \mathbb{R} \times \mathbb{R}^{3})$,       
the local energy inequality
\begin{eqnarray}\label{Eq:Skip2d}
    & & 
    \textstyle
    \int_{\mathbb{R}^{2}} |W|^{2} \phi_{x_{3}} (t) + 2 \int_{0}^{t}\int_{\mathbb{R}^{2}} | \nabla W |^{2} \phi_{x_{3}}
    \\ \nonumber
        & \leq & 
        \textstyle
        \int_{\mathbb{R}^{2}} |W_{0}|^{2} \phi_{x_{3}} (0) + \int_{0}^{t} \int_{\mathbb{R}^{2}} |W|^{2} 
    (\partial_{t} \phi_{x_{3}} + \Delta \phi_{x_{3}}) 
    +\int_{0}^{t}\int_{\mathbb{R}^{2}} (|W|^{2} + 2P_{W})W \cdot \nabla \phi_{x_{3}} \, ,
    \end{eqnarray}
holds, where the space integration is over $(x_{1}, x_{2}) \in \mathbb{R}^{2}$.
Thus, integrating over $x_{3} \in \mathbb{R}$, we see that $\widetilde{W}$ satisfies the weak 3D equation, 
letting $P_{\widetilde{W}} = \widetilde{P_{W}}$, and
\begin{eqnarray}\label{Eq:Skip2dTilde}
    & & 
    \textstyle
    \int_{\mathbb{R}^{3}} |\widetilde{W}|^{2} \phi (t) + 2 \int_{0}^{t}\int_{\mathbb{R}^{3}} | \nabla \widetilde{W} |^{2} \phi
    \\ \nonumber
        & \leq & 
        \textstyle
        \int_{\mathbb{R}^{3}} |\widetilde{W_{0}}|^{2} \phi (0) + \int_{0}^{t} \int_{\mathbb{R}^{3}} |\widetilde{W}|^{2} 
    (\partial_{t} \phi + \Delta \phi) 
    +\int_{0}^{t}\int_{\mathbb{R}^{3}} (|\widetilde{W}|^{2} + 2 P_{\widetilde{W}})\widetilde{W} \cdot \nabla \phi \, ;
    \end{eqnarray}
 %   where we recall that $\widetilde{P_{w}}(x_{1},x_{2},x_{3}) := P_{w}(x_{1},x_{2})$.
  here we have used $\widetilde{W}_{3} = (\widetilde{W_{0}})_{3}  =0$,
     $\partial_{i}\widetilde{W}_{j} =0$ if $i=3$ or if $j=3$, and 
     $\partial_{3} P_{\widetilde{W}}=0$. 
    Similarly, since $W \in C_{b}([0,\infty); L^{2}(\Rtre))$ (see  \cite[Proposition 14.3]{Lem}), integrating with respect to $x_{3}$, 
    we see that $t \to \widetilde{W}(t) \in C_{b}([0,\infty); L^{2}(K))$, for any compact set~$K \subset \Rtre$.
    Moreover, since~$P_{W} = \sum_{i,j=1}^{2} \mathcal{R}_{i} \mathcal{R}_{j} W_{i} W_{j}$ and $W_{3} =0$,  
    the representation formula for the 3D pressure
    $P_{\widetilde{W}} = \sum_{i,j=1}^{3} \mathcal{R}_{i} \mathcal{R}_{j} \widetilde{W}_{i} \widetilde{W}_{j}$ 
    is valid. Indeed, letting $\mathcal{F}$ the Fourier transform, we have
    \begin{align}\nonumber
    \mathcal{F} 
    (\sum_{i,j=1}^{3}
     \mathcal{R}_{i} \mathcal{R}_{j}  \widetilde{W}_{i} \widetilde{W}_{j})
    & 
    =  - \sum_{i,j=1}^{3} 
    \frac{\xi_{i}\xi_{j}}{\xi_{1}^{2}+\xi_{2}^{2}+\xi_{3}^{2}} \mathcal{F} ( \widetilde{W}_{i} \widetilde{W}_{j})
    =
    - \sum_{i,j=1}^{3}
    \frac{\xi_{i}\xi_{j}}{\xi_{1}^{2}+\xi_{2}^{2}+\xi_{3}^{2}} \mathcal{F} ( \widetilde{W_{i} W_{j}})
    =
    \\  \nonumber
    &
    = 
    - \delta_{\xi_{3}=0} \sum_{i,j=1}^{2} 
    \frac{\xi_{i}\xi_{j}}{\xi_{1}^{2}+\xi_{2}^{2}} \mathcal{F} ( W_{i} W_{j}) 
    = \delta_{\xi_{3}=0} \mathcal{F} (\sum_{i,j=1}^{2} \mathcal{R}_{i} \mathcal{R}_{j} W_{i} W_{j} )
    =
    \\ \nonumber
    &
    = \delta_{\xi_{3}=0} \mathcal{F} (P_{W}) =  \mathcal{F} (\widetilde{P_{W}}) = \mathcal{F} (P_{\widetilde{W}}) \, .
    \end{align}
    We remark that there is no trouble to make sense of $\mathcal{R}_{i} \mathcal{R}_{j} \widetilde{W}_{i} \widetilde{W}_{j}$ as long as
    $\widetilde{W}_{i} \widetilde{W}_{j}(t)$ is bounded, see for instance~\cite[Remark 8.1.18]{Graf}, and so for almost 
    any~$t \in (0,\infty)$, since~$\widetilde{W}$ belongs to $\in L^{2}_{t}L^{\infty}(\mathbb{R}^{3})$.
    
Finally, by definition of $\widetilde{W}$, one see that  
   \begin{equation}\nonumber
     \int_{B(x,r)} |\nabla \widetilde{W}|^{2} \leq 2r \int_{\mathbb{R}^{2}} |\nabla W|^{2} \, , 
    \end{equation}
so that
  \begin{eqnarray}\nonumber
    \frac{1}{r}
     \int \int_{Q^{*}_{r}(t,x)} |\nabla \widetilde{W}|^{2} 
      \leq  2
    \int_{t- 7r^{2}/8}^{t+r^{2}/8} \int_{\mathbb{R}^{2}} |\nabla W|^{2}(s, x_{1}, x_{2}) dx_{1}dx_{2} ds \to 0
     \quad 
    \mbox{as} 
    \quad 
    r \to 0 \, ,
\end{eqnarray}
because of the (space-time) integrability of $|\nabla W|^{2}$.

In conclusion, we have shown that 
$\widetilde{W}$ is a generalized reference solution of size~$\mathcal{K} = \| W \|^{2}_{L^{2}_{t}L^{\infty}(\mathbb{R}^{2})}$,
so that
the statement follows by Theorem~\ref{MainThmPerturbationMoreGen}.

 \hfill $\Box$

%\begin{remark}
%As observed in \cite{Ponce}, the assumption $W_0 \in L^{1}(\mathbb{R}^{2})$ can be replaced by~$W_0 \in L^{p}(\mathbb{R}^{2})$, 
%for some $p \in [1,2)$.
%\end{remark}

\subsection{Proof of Proposition~\ref{Prop:Beltr} }

The statement follows combining the forthcoming Propositions~\ref{Prop:BeltrPreq} and~\ref{InfiniteExistenceNew}. 
Notice that, given a Beltrami field $w_{0} \in L^{\infty}(\Rtre)$, the family of rescaled Beltrami fields~$t \to e^{-t \lambda^{2}} w_{0}$ 
solves the Navier--Stokes equation and
belongs to~$L^{2}_{t}L^{\infty}_{x}$; see Subsection~\ref{Sec:Beltrami} and~\eqref{VOB}.

\begin{proposition}\label{Prop:BeltrPreq}
There exists a constant $\delta_{3} >0$ such that the following holds.
Let~$w_0 \in  L^{\infty}(\Rtre)$ such that~$\nabla \times w_{0} = \lambda w_{0}$ for some~$\lambda \neq 0$.
The set $\Pi_{\delta_{0},\bar{x}}$
is regular for
any suitable weak solution $u$ to the Navier--Stokes equation, with divergence free initial data~$u_{0} \in L^{2}_{loc}$,
such that: 
\begin{enumerate}
\item $u_{0} - w_{0} \in L^{2}$;
\item $u - e^{-t \lambda^{2}} w_{0} \in L^{\infty}_{t}L^{2}_{x} \cap L^{2}_{t}\dot{H}^{1}_{x}$ is a suitable weak solution to the 
perturbed Navier--Stokes equation~\ref{ModProbInPrelimin}, around the solution~$e^{-t \lambda^{2}} w_{0}$, with initial data $u_{0} - w_{0}$;
 \end{enumerate}
 and
 \begin{equation}\nonumber
    \| |x-\bar{x}|^{-1/2} (u_{0} - w_{0})\|_{L^{2}}\le \delta_{3} e^{- \delta_{3}^{-1} \lambda^{-2} \| w_{0} \|^{2}_{L^{\infty}}}.
\end{equation}
\end{proposition}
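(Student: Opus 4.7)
The plan is to prove Proposition~\ref{Prop:BeltrPreq} by verifying that the explicit velocity field
$$
\tilde w(t,x) := e^{-\lambda^2 t}\,w_0(x)
$$
is a generalized reference solution of size $\mathcal K \simeq \lambda^{-2}\|w_0\|_{L^\infty}^2$ in the sense of Definition~\ref{Def:GenRefSol}, relative to the admissible couple $(r,q)=(2,\infty)$, and then invoking Theorem~\ref{MainThmPerturbationMoreGen}. The existence of a suitable weak solution $u$ whose difference with $\tilde w$ solves the perturbed equation \eqref{ModProbInPrelimin} is not part of the present statement and will be supplied separately by Proposition~\ref{InfiniteExistenceNew}.

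First I would record that $w_0$ is automatically smooth and divergence free: $\nabla\cdot w_0 = \lambda^{-1}\nabla\cdot(\nabla\times w_0)=0$, and since $-\Delta w_0 = \lambda^2 w_0 \in L^\infty$, the real analyticity of bounded Laplacian eigenfunctions gives $w_0\in C^\infty$ with locally bounded derivatives of every order. The Beltrami identity
$$
(w_0\cdot\nabla)w_0 = \nabla\bigl(\tfrac{1}{2}|w_0|^2\bigr) - w_0\times(\nabla\times w_0) = \nabla\bigl(\tfrac{1}{2}|w_0|^2\bigr),
$$
combined with the pointwise cancellation $\partial_t\tilde w - \Delta\tilde w = -\lambda^2\tilde w + \lambda^2\tilde w = 0$, shows that $\tilde w$ is a classical (hence suitable) solution of \eqref{CauchyNS} with pressure $P_{\tilde w}(t,x) = -\tfrac{1}{2}|\tilde w(t,x)|^2$. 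A direct computation yields
$$
\|\tilde w\|^{2}_{L^{2}_{t}L^{\infty}_{x}} = \|w_0\|_{L^\infty}^2 \int_{0}^{\infty} e^{-2\lambda^2 t}\,dt = \frac{\|w_0\|_{L^\infty}^2}{2\lambda^2} =: \mathcal K,
$$
and smoothness plus boundedness of $\tilde w$ give the continuity $t\mapsto\tilde w(t)\in C_b([0,\infty);L^2(K))$ for each compact $K\subset\mathbb{R}^3$, as well as the regularity criterion \eqref{CKNConditionSB} at every $(t,x)\in(0,\infty)\times\mathbb{R}^3$, since $\nabla\tilde w$ is locally bounded.

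The only nontrivial point in Definition~\ref{Def:GenRefSol} is the pressure representation $P_{\tilde w} = \mathcal T\cdot(\tilde w\otimes\tilde w)$, because the usual Riesz-transform formula \eqref{RecoverP} is unavailable for an $L^\infty$ velocity field of infinite energy. I sidestep this by exploiting the algebraic form of the Beltrami pressure: I choose the symmetric matrix of operators
$$
\mathcal T_{ij} := -\tfrac{1}{2}\,\delta_{ij}\,\mathrm{Id},
$$
so that $\mathcal T\cdot(\tilde w\otimes\tilde w) = -\tfrac{1}{2}\sum_{i}\tilde w_{i}^{2} = -\tfrac{1}{2}|\tilde w|^2 = P_{\tilde w}$, and each entry of $\mathcal T$ is a bounded multiplier on every $L^p(|x|^{\alpha p}dx)$. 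This is the only place where the special Beltrami structure enters in an essential way, and it is what I expect to be the main (small) conceptual obstacle. Once the hypotheses are verified, Theorem~\ref{MainThmPerturbationMoreGen} applied to $\tilde w$ yields regularity of $\Pi_{\delta_0,\bar x}$ whenever $\||x-\bar x|^{-1/2}(u_0-w_0)\|_{L^2}\le\delta_0 e^{-\mathcal K/\delta_0}$, and the choice $\delta_3:=\delta_0$ (which forces $\delta_3^{-1}\lambda^{-2}\|w_0\|_{L^\infty}^2 \ge \mathcal K/\delta_0$) completes the proof.
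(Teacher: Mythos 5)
Your argument is correct and follows essentially the same route as the paper: verify that $e^{-\lambda^{2}t}w_{0}$ is a generalized reference solution of size $\mathcal{K}=\tfrac{1}{2}\lambda^{-2}\|w_{0}\|_{L^{\infty}}^{2}$ with the pressure represented via $\mathcal{T}=-\tfrac{1}{2}\,\mathrm{Id}$, and then apply Theorem~\ref{MainThmPerturbationMoreGen}, deferring existence to Proposition~\ref{InfiniteExistenceNew}. Your choice $\delta_{3}=\delta_{0}$ works since it makes the exponent at least as negative as $\mathcal{K}/\delta_{0}$; the paper instead takes $\delta_{3}=2\delta_{0}$, a purely cosmetic difference.
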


\subsubsection*{Proof of Proposition~$\ref{Prop:BeltrPreq}$}

Recalling Subsection~\ref{Sec:Beltrami}, since $w_{0}$ is analytic (it is a solution of $\Delta w_{0} = -\lambda^{2} w_{0}$), it is clear that  
$w(t,x) := e^{-\lambda^{2}t}w_{0}$ is a suitable weak solution to the Navier--Stokes equation that also satisfies the regularity
condition~(\ref{CKNConditionSB}) at any~$(t, x) \in (0,\infty) \times \Rtre$. Moreover, it s clear 
that~$t \in [0,\infty) \to w(t) \in L^{2}(K)$ is continuous for any compact $K \subset \Rtre$ and, since the pressure can be chosen 
to be~$P=-\frac{1}{2}|w(t,x)|^{2}$, the 
representation formula required by Definition \ref{Def:GenRefSol} trivially holds taking $\mathcal{T} = -\frac{1}{2} Id$.
Finally
\begin{equation}\label{VOB}
\| w \|^{2}_{L^{2}_{t} L^{\infty}_{x}} = \int_{0}^{\infty} e^{-2 \lambda^{2} s} \|w_{0}\|^{2}_{L^{\infty}_{x}} ds =
\frac{1}{2} \lambda^{-2} \|w_{0}\|^{2}_{L^{\infty}_{x}} \, ,
\end{equation}
so that $w$ is a generalized reference solution of size $\mathcal{K} =  \frac{1}{2} \lambda^{-2} \|w_{0}\|^{2}_{L^{\infty}_{x}}$,
and the statement follows by Theorem~\ref{MainThmPerturbationMoreGen}, taking~$\delta_{3} = 2 \delta_{0}$. 

 \hfill $\Box$

\subsection{Weak solutions with unbounded energy}

In the following statement we assume the initial data~$u_{0}$ to be only locally square integrable, but,
since the difference $u_{0} - w_{0}$ belongs to $L^{2}$, with $w_{0}$ the initial datum of a generalized 
reference solution~$w \in L^{2}_{t}L^{\infty}_{x}$, 
we are still able to prove the existence of suitable weak solutions.

\begin{proposition}\label{InfiniteExistenceNew}
Let $w \in L^{2}_{t}L^{\infty}_{x}$ be a generalized reference solution to the Navier--Stokes 
equation (see Definition~\ref{Def:GenRefSol}), with divergence free data $w_{0} \in L^{2}_{loc}$.
%relative to the admissible couple~$(r,q)=(2,\infty)$.
For any divergence free initial datum~$u_{0}$, such that~$u_{0} - w_{0} \in L^{2}$,
there exists a suitable weak 
solution~$u$ to the Navier--Stokes equation, such 
that~$v := u-w \in L^{\infty}_{t}L^{2}_{x} \cap L^{2}_{t}\dot{H}^{1}_{x}$ 
is a suitable weak solution
to the perturbed Navier--Stokes equation~\eqref{ModProbInPrelimin}, around $w$, with data~$v_{0} := u_{0} - w_{0}$. 
\end{proposition}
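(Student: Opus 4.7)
The strategy is to build $v = u-w$ directly as a suitable weak solution of the perturbed system \eqref{ModProbInPrelimin}, with divergence free data $v_{0} = u_{0}-w_{0} \in L^{2}(\Rtre)$, and then define $u := v+w$. The point is that, even though $u_{0}$ has unbounded energy, the perturbation $v_{0}$ does not, and the key terms produced by $w$ in the energy balance are controlled by the integrability $w \in L^{2}_{t}L^{\infty}_{x}$.

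First I would set up a Leray-type approximation for the perturbed equation. Using a standard retarded-mollification of the transport velocity, one constructs approximate solutions $v^{\varepsilon}$ of
\begin{equation*}
\partial_{t}v^{\varepsilon} + (J_{\varepsilon}v^{\varepsilon}\cdot\nabla)v^{\varepsilon} + (v^{\varepsilon}\cdot\nabla)w + (w\cdot\nabla)v^{\varepsilon} - \Delta v^{\varepsilon} = -\nabla P^{\varepsilon}_{v}, \qquad \nabla\cdot v^{\varepsilon}=0,
\end{equation*}
with $v^{\varepsilon}(0)=v_{0}$ and $P^{\varepsilon}_{v}$ recovered by the analogue of \eqref{ReprFormSing} with $\mathcal{T}=\mathcal{R}\otimes\mathcal{R}$. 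Existence for the approximate problem is classical since the velocity in the nonlinearity is mollified and $w$ is a given forcing-type field belonging to $L^{2}_{t}L^{\infty}_{x}$.

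The main a priori estimate comes from testing against $v^{\varepsilon}$ itself. Two cancellations occur: the usual $\int(J_{\varepsilon}v^{\varepsilon}\cdot\nabla)v^{\varepsilon}\cdot v^{\varepsilon}=0$ and $\int(w\cdot\nabla)v^{\varepsilon}\cdot v^{\varepsilon}=0$, both thanks to the divergence-free assumption. The only truly new term is $\int(v^{\varepsilon}\cdot\nabla)w\cdot v^{\varepsilon}$, which after integrating by parts and using $\nabla\cdot v^{\varepsilon}=0$ equals $-\int(v^{\varepsilon}\cdot\nabla)v^{\varepsilon}\cdot w$; this is bounded by $\|w\|_{L^{\infty}}\|v^{\varepsilon}\|_{L^{2}}\|\nabla v^{\varepsilon}\|_{L^{2}}$ and absorbed as
\begin{equation*}
\tfrac{d}{dt}\|v^{\varepsilon}\|_{L^{2}}^{2} + \|\nabla v^{\varepsilon}\|_{L^{2}}^{2} \le \|w\|_{L^{\infty}}^{2}\,\|v^{\varepsilon}\|_{L^{2}}^{2}.
\end{equation*}
Since $\|w\|_{L^{\infty}}^{2}\in L^{1}_{t}$, Gr\"onwall yields uniform bounds for $v^{\varepsilon}$ in $L^{\infty}_{t}L^{2}_{x}\cap L^{2}_{t}\dot H^{1}_{x}$, hence also in the mixed Lebesgue spaces \eqref{UIP}. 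The pressure bounds \eqref{UIPPressure} then follow from the boundedness of $\mathcal{T}$ on the relevant spaces.

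Next I would pass to the limit. Standard compactness (Aubin--Lions in bounded space-time cylinders, together with uniform $L^{2}_{loc}$ bounds and equicontinuity in time obtained from the equation) gives, along a subsequence, $v^{\varepsilon}\to v$ strongly in $L^{2}_{t,x,loc}$ and weakly in $L^{2}_{t}\dot H^{1}_{x}$. All linear terms pass to the limit by weak convergence; the quadratic $v^{\varepsilon}\otimes v^{\varepsilon}$ passes by strong $L^{2}_{loc}$ convergence; the mixed terms $v^{\varepsilon}\otimes w$ and $w\otimes v^{\varepsilon}$ pass since $w\in L^{2}_{t}L^{\infty}_{x}$ combines with weak $L^{2}_{t}L^{6}_{x,loc}$ convergence of $v^{\varepsilon}$. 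The pressure $P_{v}$ in the limit is given by \eqref{ReprFormSing} with $\mathcal{T}=\mathcal{R}\otimes\mathcal{R}$, which is the expression compatible with the Riesz formula used for the Navier--Stokes pressure of $u=v+w$. The initial condition $v(t)\rightharpoonup v_{0}$ in $L^{2}$ as $t\to 0^{+}$ is obtained from the weak continuity of $v$ in time, which follows from the equation.

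The delicate step, and the one I expect to be the main obstacle, is verifying the local energy inequality \eqref{eq:StrongPertGenInProofOLD} in the limit. For the approximate system, multiplication by $v^{\varepsilon}\phi$ with $\phi\in C^{\infty}_{c}$ and integration by parts produces an \emph{equality} of the desired form with all the $w$-terms appearing exactly as in \eqref{eq:StrongPertGenInProofOLD} (the mollification is designed so that the cubic $v^{\varepsilon}$-term integrates by parts cleanly). Passing to the limit in the quadratic left-hand side terms gives an inequality rather than an equality by lower semicontinuity of the $L^{2}$ norms under weak convergence, which is precisely what is needed. The right-hand side passes to the limit by the same strong-convergence arguments as above, and by duality for the pressure term, using the $L^{p}$ boundedness of the Riesz transforms together with the uniform $L^{\bar r}_{t}L^{\bar q}_{x,loc}$ bounds \eqref{UIP}. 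The inequality is then proven to hold for $t_{0}=0$ and, by the standard restarting argument sketched in the paragraph before Lemma~\ref{Lemma:PertGenEnIneq} (already used to deduce \eqref{RestClassicEnergy} from \eqref{ClassicEnergy}), for almost every $t_{0}>0$.

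Finally, setting $u := v+w$, the identities $(u\cdot\nabla)u = (v\cdot\nabla)v + (v\cdot\nabla)w + (w\cdot\nabla)v + (w\cdot\nabla)w$ and $P_{u}=P_{v}+P_{w}$ show that $u$ solves the Navier--Stokes equation in the distributional sense; the local energy inequality \eqref{GenEnIneq} for $u$ is obtained by summing \eqref{eq:StrongPertGenInProofOLD} for $v$ with the analogous local energy inequality for the generalized reference solution $w$ and recombining the cross terms, using that all products of $v$, $w$ and their gradients appearing are integrable against compactly supported $\phi$ by the regularity of the two factors. This yields a suitable weak solution $u$ with the required properties.
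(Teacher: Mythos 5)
Your proposal follows essentially the same route as the paper: a mollified perturbed system, the energy estimate obtained from the cancellations and the absorption of $\int (v^{\varepsilon}\cdot\nabla)v^{\varepsilon}\cdot w$ via Gr\"onwall with $\|w\|_{L^{\infty}_{x}}^{2}\in L^{1}_{t}$, Aubin--Lions compactness, passage to the limit in the equation, pressure and local energy inequality, and finally $u:=v+w$. The only substantive point you pass over is that the paper upgrades the $L^{2}_{loc}$ compactness to strong convergence in $L^{2}((0,T)\times\Rtre)$ (by showing the energy of $v^{(\varepsilon)}$ outside large balls is uniformly small), which it then uses to identify the nonlocal pressure and to handle the trilinear term $(v\cdot\nabla)v\cdot w\,\phi$ via a.e.-in-time $L^{2}$ convergence; your sketch would need this (or an equivalent argument combining the global uniform bounds with local strong convergence) to be complete.
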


\subsubsection*{Proof of Proposition \ref{InfiniteExistenceNew}}
 
Once we have found~$v$, it is
straightforward to check that the full statement follows taking
$u := w + v$ and $P_{u} := P_{w} + P_{v}$; see Lemma~\ref{LasLem} for more details.  
In order to prove the existence of~$v$, we first consider, as usual, the family of mollified problems ($0< \varepsilon <1$):
\begin{equation}\label{CauchyNSForvExistenceMollified}
\left \{
\begin{array}{lcl}
\partial_{t}v^{(\varepsilon)} + ((v^{(\varepsilon)} * \rho^{\varepsilon}) \cdot \nabla) v^{(\varepsilon)} + 
(v^{(\varepsilon)} \cdot \nabla) w 
& &
\\
 + (w \cdot \nabla) v^{(\varepsilon)}
  -\Delta v^{(\varepsilon)} = - \nabla P_{v^{(\varepsilon)}} &  &    \\
\nabla \cdot v^{(\varepsilon)} = 0  &  &   \\
v^{(\varepsilon)}(0)   = v_{0} \, , &  &  
\end{array}\right. 
\end{equation}
where $0 \leq \rho \in C^{\infty}_{c}(\Rtre)$, $\int_{\Rtre} \rho =1$, 
$\rho^{\varepsilon}(x):= \varepsilon^{-3} \rho(\varepsilon^{-1} x)$.
Since all the vector fields involved are divergence free,  the first equation can be rewritten as
\begin{equation}\label{ALLemma}
\partial_{t} v^{(\varepsilon)} =
\nabla \cdot [ - (v^{(\varepsilon)} * \rho^{\varepsilon}) \otimes v^{(\varepsilon)} - 
v^{(\varepsilon)} \otimes w 
-  w \otimes v^{(\varepsilon)}
 -  P_{v^{(\varepsilon)}} Id + \nabla v^{(\varepsilon)} ] \, .
\end{equation}

A standard fixed point argument allows to find, for any $0 < \varepsilon <1$, a unique  
solution~$v^{(\varepsilon)}$ in $C_{b}([0, T); L^{2}_{x}) \cap L^{2}_{T}\dot{H}^{1}_{x}$
%such that 
%$v^{(\varepsilon)}(t)$ converges to $v_{0}$ strongly in $L^{2}$ as $t \to 0^{+}$ and
 such that, for all $t < T$: 
\begin{equation}\label{HYInftyProof}
  \textstyle
  \int_{\mathbb{R}^{3}}  
  |v^{(\varepsilon)} (t)|^{2} 
   +  
  \textstyle
  2 \int_{0}^{t}\int_{\mathbb{R}^{3}} 
   |\nabla  v^{(\varepsilon)}|^{2} 
  \le 
   \int_{\mathbb{R}^{3}}  |v_{0}|^{2} 
   + 
   2 \int_{0}^{t}\int_{\mathbb{R}^{3}}  (v^{(\varepsilon)} \cdot \nabla ) v^{(\varepsilon)} \cdot w \, ;
\end{equation}
we write $L^{r}_{T}$ when the time integration is restricted to $t \in (0,T)$.
The local existence time $T$ depends, in principle, on $\|v_{0}\|_{L^{2}}$, but it can be taken arbitrarily large   
via a standard continuation argument.    
Indeed, using H\"older's and Young's inequalities, we can estimate the last term of (\ref{HYInftyProof}) as
\begin{eqnarray}\nonumber
& & 
\textstyle
2 \int_{0}^{t}\int_{\mathbb{R}^{3}}  (v^{(\varepsilon)} \cdot \nabla )   v^{(\varepsilon)}   \cdot  w
\leq  
 \| \nabla v^{(\varepsilon)} \|^{2}_{L^{2}_{t}L^{2}_{x}}
+ 
\| |v^{(\varepsilon)}| |w | \|^{2}_{L^{2}_{t}L^{2}_{x}}
\\ \nonumber
& \leq &
\textstyle
 \int_{0}^{t}\int_{\mathbb{R}^{3}} 
   |\nabla  v^{(\varepsilon)}|^{2}
+ 
\int_{0}^{t} \| w \|^{2}_{L^{\infty}_{x}}(s) \left(  \int_{\mathbb{R}^{3}}  |v^{(\varepsilon)} (s,x)|^{2} dx \right) ds \, ,
\end{eqnarray}
plugging this into the (\ref{HYInftyProof}) and absorbing the term $  \int_{0}^{t}\int_{\mathbb{R}^{3}} 
   |\nabla  v^{(\varepsilon)}|^{2}$ into the left hand side, we obtain 
\begin{equation}\nonumber
  \textstyle
  \int_{\mathbb{R}^{3}}  
  |v^{(\varepsilon)} (t)|^{2} 
   +  
  \textstyle
   \int_{0}^{t}\int_{\mathbb{R}^{3}} 
   |\nabla  v^{(\varepsilon)}|^{2} 
  \le 
   \int_{\mathbb{R}^{3}}  |v_{0}|^{2}   
  +
 \int_{0}^{t} \| w \|^{2}_{L^{\infty}_{x}}(s) \left(  \int_{\mathbb{R}^{3}}  |v^{(\varepsilon)} (s,x)|^{2} dx \right) ds \, .
\end{equation}
Thus, since $\| w \|^{2}_{L^{\infty}_{x}}(t)$ is integrable by assumption, the Gr\"onwall's inequality gives
\begin{equation}\label{UnifEstimates}
\textstyle
\int_{\mathbb{R}^{3}}  
  |v^{(\varepsilon)} (t,x)|^{2} dx \leq Ae^{\mathcal{K}},
  \quad
  \int_{0}^{t}\int_{\mathbb{R}^{3}} 
   |\nabla  v^{(\varepsilon)} (s,x)|^{2} \ dx ds \leq A(1+\mathcal{K}e^{\mathcal{K}})  \, ,
\end{equation}
for all $t < T$, where $A := \| v_{0} \|^{2}_{L^{2}}$ and $\mathcal{K} := \| w \|^{2}_{L^{2}_{t}L^{\infty}_{x}}$. 
Then, the continuation argument allows to extend the local theory to any $T>0$, in such a way that 
\begin{equation}\label{ALLemmaPreq}
%\textstyle
\sup_{t \geq 0}
\int_{\mathbb{R}^{3}}  
  |v^{(\varepsilon)}(t,x)|^{2} dx \leq Ae^{\mathcal{K}},
  \quad
  \int_{0}^{\infty}\int_{\mathbb{R}^{3}} 
   |\nabla  v^{(\varepsilon)} (s,x)|^{2} \ dx ds \leq A(1+\mathcal{K}e^{\mathcal{K}}) \, . 
\end{equation}
Since these bounds hold uniformly in $0 < \varepsilon < 1$, we can extract, for any $T >0$, a subsequence,
that with a little abuse of notations we still denote~$v^{(\varepsilon)}$, that converges weakly to 
some $v$ in 
$L^{2}((0,T) \times \Rtre)$, 
and such 
that $\nabla v^{(\varepsilon)}$ converges to $\nabla v$ in 
$L^{2}((0,\infty)\times \Rtre)$, as $\varepsilon \to 0$.
%$L^{\infty}_{t}L^{2}_{x} \cap L^{2}_{t}\dot{H}^{1}_{x}$ as $\varepsilon \to 0$. 
In the following we will repeat the same abuse a few times. 
By the weak lower semicontinuity of the norm and~\eqref{ALLemmaPreq}, we also have that $v$ belongs to $L^{2}_{t}\dot{H}^{1}_{x}$.
Now, it is well known that in the context of the Navier--Stokes equation the weak convergence of $v^{(\varepsilon)}$ to $v$ can be promoted to
strong convergence in $L^{2}((0,T) \times \Rtre)$, for any $T >0$. This is also the case here, by a straightforward adaptation
of the standard argument, that we will recall at the end of the proof.   
This also implies that, 
on a certain subsequence, we have 
\begin{equation}\label{EgorLemma}
\| v^{(\varepsilon)}(t) - v(t) \|_{L^{2}(\Rtre)} \to 0
\quad
\mbox{for almost every $t \in (0,\infty)$} \, .
\end{equation}
Using this and the first inequality in~\eqref{ALLemmaPreq}, we immediately see that $v \in L^{\infty}_{t}L^{2}_{x}$. 
Since we already observed that $v \in L^{2}_{t}\dot{H}^{1}_{x}$, using again~\eqref{ALLemmaPreq}, Sobolev embedding and 
interpolation, we see that,
for all $T > 0$,  
the vector fields $v$, $v^{(\varepsilon)}$ and  
$v^{(\varepsilon)} * \rho^{\varepsilon}$ are~$\varepsilon$-uniformly 
bounded in~$L^{\bar{r}}_{T} L^{\bar{q}}_{x}$, provided~$2/\bar{r} + 3/\bar{q} \geq 3/2$, $2 \leq \bar{q} \leq 6$; see~\eqref{UIP}.
%Clearly, the same holds in $L^{\bar{r}}_{T} L^{\bar{q}}(K)$, $K \subset \Rtre$ compact, $2/\bar{r} + 3/\bar{q} \geq 3/2$, $1 \leq \bar{q} \leq 6$, $1 \leq \bar{r}$.
%For the same reason, $v$ is bounded in these functional spaces.  
Using this fact, triangle inequality and interpolation, the $L^{2}_{T}L^{2}_{x}$ (strong) 
convergence of $v^{(\varepsilon)}$ (and of $v^{(\varepsilon)} * \rho^{\varepsilon}$) can be promoted to (strong)
convergence in $L^{\bar{r}}_{T} L^{\bar{q}}_{x}$, provided~$2/\bar{r} + 3/\bar{q} > 3/2$, $2 < \bar{q} < 6$, for any $T >0$.  
Now we consider the pressure.
Taking the divergence of the first equation in (\ref{CauchyNSForvExistenceMollified}), we have 
%recalling~$\nabla \cdot v^{(\varepsilon)} = \nabla \cdot  w^{\varepsilon}=0$, so that 
\begin{equation}\nonumber
\Delta P_{v^{(\varepsilon)}} 
= - \nabla \otimes \nabla \cdot [ (v^{(\varepsilon)} * \rho^{\varepsilon}) \otimes v^{(\varepsilon)} + 2 v^{(\varepsilon)} \otimes w ] \, ,
\end{equation}
 or equivalently (we recall that~$\mathcal{R}$ is the Riesz transform)
\begin{equation}\label{SameIsTrue11}
P_{v^{(\varepsilon)}} = 
\mathcal{R} \otimes \mathcal{R} \cdot [ (v^{(\varepsilon)} * \rho^{\varepsilon}) \otimes v^{(\varepsilon)} + 2 v^{(\varepsilon)} \otimes w ] \,.
\end{equation} 
% see also the~(\ref{EmbeddingEver}).
Since we have also assumed~$w \in L^{2}_{t}L^{\infty}_{x}$, 
using the~$L^{q > 1}$ boundedness of the Riesz transform, we have 
that $P_{v^{(\varepsilon)}}$ is $\varepsilon$-uniformly bounded in~$L^{\bar{r}}_{T} L^{\bar{q}}_{x}$, 
provided $2/\bar{r} + 3/\bar{q} \geq 3$, $2 \leq \bar{q} \leq 3$; comparing to~\eqref{UIPPressure}, the 
restriction~$2 \leq \bar{q}$ arises since 
we also need to estimate the contribution of $w$ to the pressure. 
Thus we can extract a subsequence, that we still denote with $P_{v^{(\varepsilon)}}$, which converges 
weekly to a function $P_{v}$ in~$L^{\bar{r}}_{T} L^{\bar{q}}_{x}$, 
$2/\bar{r} + 3/\bar{q} \geq 3$, $2 \leq \bar{q} \leq 3$. Then the limit~$P_{v}$ has to coincide with
\begin{equation}\label{SameIsTrue12}
P_{v} := \mathcal{R} \otimes \mathcal{R} \cdot (v \otimes v +2 v \otimes w) \, .
\end{equation}

We are now ready to show that 
the couple $(v, P_{v})$ is a suitable weak solution to the perturbed Navier--Stokes equation~\eqref{ModProbInPrelimin}, around~$w$; see
Definition~\eqref{ModSolDef}.
It is already clear that the equation~\eqref{ModProbInPrelimin} is satisfied in the sense of distribution, that 
the representation 
formula~\eqref{ReprFormSing} for the pressure holds taking~$\mathcal{T} = \mathcal{R} \otimes \mathcal{R}$, 
and we already know
$v \in L^{\infty}_{t}L^{2}_{x} \cap L^{2}_{t}\dot{H}^{1}_{x}$, $P_{v} \in L^{3/2}_{loc}((0,\infty) \times \Rtre)$.
Now we prove the perturbed local energy inequality~\eqref{eq:StrongPertGenInProofOLD}.  
We first consider the case~$t_{0} = 0$. For any non negative test function~$\phi$, we take the scalar product of the 
first equation in~(\ref{CauchyNSForvExistenceMollified}) 
with~$2\phi v^{(\varepsilon)}$, integrate over $(0,t) \times \Rtre$ and by parts, so that
 \begin{eqnarray}\label{EpsilonWeakEnergy}
  \textstyle
  & \int_{\mathbb{R}^{3}} &   
  |v^{(\varepsilon)} |^{2} \phi(t,x) dx
   +  
  \textstyle
  2 \int_{0}^{t}\int_{\mathbb{R}^{3}} 
  |\nabla  v^{(\varepsilon)}|^{2} \phi
  = 
  \int_{\mathbb{R}^{3}}  |v_{0}|^{2}  \phi(0,x) dx
  \\   \nonumber
  & + &
  \textstyle
   \int_{0}^{t} \int_{\mathbb{R}^{3}} 
  |v^{(\varepsilon)}|^{2} (\phi_{t} + \Delta \phi)
  +  |v^{(\varepsilon)}|^{2} (v^{(\varepsilon)}*\rho^{\varepsilon}) \cdot \nabla \phi 
   + 2P_{v^{(\varepsilon)}} v^{(\varepsilon)} \cdot \nabla \phi
  \\   \nonumber
  & + &
  \textstyle
  \int_{0}^{t}\int_{\mathbb{R}^{3}} 
  |v^{(\varepsilon)}|^{2} w \cdot \nabla \phi
  +
  2  (v^{(\varepsilon)} \cdot w ) v^{(\varepsilon)} \cdot \nabla \phi  
  + 2  (v^{(\varepsilon)} \cdot \nabla ) v^{(\varepsilon)} \cdot w \phi .
  \end{eqnarray} 
Using this and the integrability and convergence properties of the functions involved, it is now straightforward to pass to the limit  
$\varepsilon \to 0$ so that  
  \begin{eqnarray}\label{AEEnIneqq}
  \textstyle
  & \int_{\mathbb{R}^{3}} &  
  |v |^{2} \phi(t,x)  dx
   +  
  \textstyle
  2 \liminf_{\varepsilon \to 0} \int_{0}^{t}\int_{\mathbb{R}^{3}} 
   |\nabla  v^{(\varepsilon)}|^{2}  \phi
  \leq 
  \\   \nonumber
  & \leq &
  \textstyle
   \int_{\mathbb{R}^{3}}  |v_{0}|^{2} \phi(0,x) dx
   +
   \int_{0}^{t} \int_{\mathbb{R}^{3}} 
  |v|^{2} (\phi_{t} + \Delta \phi)
  +  |v|^{2} v \cdot \nabla \phi 
   + 2P_{v} v \cdot \nabla \phi
  \\   \nonumber
   & + &
  \textstyle
  \int_{0}^{t}\int_{\mathbb{R}^{3}} 
  |v|^{2} (w \cdot \nabla \phi) 
  +
  2  (v \cdot w) v \cdot \nabla \phi  
  + 2  (v \cdot \nabla ) v \cdot w \phi \, ,
  \end{eqnarray}
for almost every $t > 0$.
We omit the details of this straightforward fact.
% We only remark that
%the terms with $w^{\varepsilon}$ have to be handled using that $w^{\epsilon}$ converges in measure
%to $w$ in the support of $\phi$, as we have done above.
We only remark that, in order to handle the term
$(v \cdot \nabla ) v \cdot w \phi$, we need to notice that by~\eqref{EgorLemma} end Egoroff's theorem, we have, for any $0 < \delta < 1$, $t >0$, 
that    
$\sup_{s \in \Omega_{t,\delta}} \| v^{(\varepsilon)}(s) - v(s) \|_{L^{2}(\Rtre)} < \delta $, for all $\varepsilon$ sufficiently small,
where $\Omega_{t,\delta} \subset (0,t)$ with $|(0,t) \setminus \Omega_{t}| < \delta$.
Once we have proved~\eqref{AEEnIneqq}, for almost any $t >0$, since 
\begin{equation}\nonumber
 \int_{0}^{t}\int_{\mathbb{R}^{3}} 
  \phi |\nabla  v|^{2}
  \leq 
  \liminf_{\varepsilon \to 0} \int_{0}^{t}\int_{\mathbb{R}^{3}}
  \phi |\nabla  v^{(\varepsilon)}|^{2} \, , %\quad (\mbox{recall $\phi \geq 0$}) \, ,
\end{equation}
the (\ref{eq:StrongPertGenInProofOLD}) has been proved for 
almost every $t > 0$ and $t_{0}=0$. In fact, we can extended it to any $t >0$ once we modify~$v$ on a 
set of zero Lebesgue measure, in order to make the function~$t \in (0,\infty) \to  v (t) \in L^{2}(\Rtre)$ 
weakly continuous. This is a standard argument in the theory of vector valued time dependent functions, 
once we notice that $\partial_{t} u \in L^{1}_{T}H^{-1}_{x}$, for all~$T >0$, that is immediately clear
if we rewrite the equation as
\begin{equation}\nonumber
\partial_{t} v =
\nabla \cdot [ - (v  \otimes v - 
v \otimes w 
-  w \otimes v
 -  P_{v} Id + \nabla v ] \, ,
\end{equation}
as we may taking advantage of the fact that $v$ and $w$ are divergence free. 
Then, the case~$t_{0} > 0$
of the inequality can be deduced, for almost any $t_{0}>0$, by the case $t_{0}=0$, as explained in 
the forthcoming Lemma \ref{Lemma:PertGenEnIneq}.

Let now $\Phi$ be a
$C^{2}_{c}(\Rtre)$ vector field. Recalling the momentum equation~\eqref{CauchyNSForvExistenceMollified}, we have
\begin{align}\nonumber
     \textstyle
     \int_{0}^{t'}  \partial_{t} 
& 
     \textstyle \big(  \int_{\Rtre} v^{(\varepsilon)}(t) \cdot \Phi \big)  dt =
\\ \nonumber
& 
  \textstyle  = \int_{0}^{t'} 
    \big( \int_{\Rtre}  ((v^{(\varepsilon)} * \rho^{\varepsilon}) \otimes v^{(\varepsilon)}   +
   v^{(\varepsilon)} \otimes w  +
   w \otimes v^{(\varepsilon)}  
    + P_{v^{(\varepsilon)}}  )(s) \cdot  \nabla \Phi 
     +  v^{(\varepsilon)} (s) \cdot \Delta \Phi \big)  ds \, ,
\end{align}   
for all $t' >0$.
Thus, recalling the ($\varepsilon$-uniform) integrability properties of $v^{(\varepsilon)}$, $P_{v^{(\varepsilon)}}$ 
and~$w \in L^{2}_{t}L^{\infty}_{x}$,
we easily arrive to
 \begin{equation}\label{LBNL}
 \lim_{t' \to 0^{+}} \sup_{0< \varepsilon < 1} \left| \int_{\Rtre} ( v^{(\varepsilon)}(t') - v_{0} ) \cdot \Phi \right|  = 0 
 \end{equation}
The $L^{2}$-weak convergence $v(t) \to v_{0}$ as $t \to 0^{+}$ is now a consequence of the following observation. Given 
any $0 < \delta < 1$, for any sufficiently small time $t >0$, we can find a time $t'$ at which~\eqref{EgorLemma} holds 
and such that $|t-t'|$ is also sufficiently small that 
we have these three facts: $|\int_{\Rtre} (v(t) - v(t')) \cdot \Phi | < \delta$, as a consequence of the 
$L^{2}$-weak continuity of the function $t \in (0,\infty) \to v(t)$, 
$|\int_{\Rtre} (v^{(\varepsilon)}(t') - v_{0}) \cdot \Phi | < \delta$, as a consequence of~\eqref{LBNL}, and, taking $\varepsilon$ sufficiently small,
$|\int_{\Rtre} (v(t') - v^{(\varepsilon)}(t')) \cdot \Phi | < \delta$, as a consequence of~\eqref{EgorLemma} .  
In conclusion, we have shown that $v$ is a suitable solution to the perturbed equation~\eqref{ModProbInPrelimin}.

In the proof we have used the
strong $L^{2}((0,T)\times \Rtre)$ convergence of~$v^{(\varepsilon)}$
to~$v$, 
as~$\varepsilon \to 0$. We conclude proving this fact. 
Using~(\ref{ALLemma}) and the ($\varepsilon$-uniform) integrability properties of $v^{(\varepsilon)}$, $P_{v^{(\varepsilon)}}$ and 
$w \in L^{2}_{t}L^{\infty}_{x}$, 
we can immediately check that~$\sup_{\varepsilon>0} \| \partial_{t}v^{(\varepsilon)} \|_{L^{1}_{T}H^{-1}_{x}}$ is finite. 
Thus, recalling~(\ref{ALLemmaPreq}), we are allowed to 
use the Aubin--Lions lemma (see for instance~\cite[Theorem 12.1]{Lem2})
to extract a subsequence 
\begin{equation}\label{LLAB}
v^{(\varepsilon)} \to v,
\quad
\mbox{strongly in} 
\quad
L^{2}_{loc}((0,\infty) \times \Rtre) \, .
\end{equation}
Define now $\gamma_{\leq R}(x) := \gamma(x/R)$, where $R > 1$ and $\gamma \geq 0$ is a smooth cut-off function of the unit ball in $\Rtre$. 
Let then $\eta^{2}_{>R} := 1-\gamma^{2}_{\leq R}$ and split
 $$
v^{(\varepsilon)} - v = \gamma^{2}_{\leq R}( v^{(\varepsilon)} -v ) + \eta^{2}_{> R} ( v^{(\varepsilon)} - v) \, .
$$
Using the strong $L^{2}_{loc}$ convergence (\ref{LLAB}) and 
\begin{equation}\label{EnergyOutsidePreq}
\limsup_{R \to \infty} \ \sup_{0 < \varepsilon < 1}  \int_{0}^{T} \int_{\Rtre} \eta^{2}_{> R}(x) |(v^{(\varepsilon)} - v)(t,x)|^{2} dxdt  = 0 \, ,
\end{equation}
we can immediately deduce
strong convergence in~$L^{2}((0,T)\times \Rtre)$.
%Then, the strong convergence in~$L^{3}((0,T)\times \Rtre)$ would follow taking advantage of the uniform estimates~(\ref{ALLemmaPreq}), 
%using the Gagliardo--Nirenberg inequality and the property~\eqref{EgorLemma} (that now we now know to be true), as shown in \cite[pp. 110]{S2}.
In order to prove~(\ref{EnergyOutsidePreq}),
since~$v \in L^{2}((0,T)\times \Rtre)$, %, by Young's inequality 
it suffices to show
\begin{equation}\label{EnergyOutside}
\limsup_{R \to \infty} \ \sup_{0 < \varepsilon < 1}  \int_{0}^{T} \int_{\Rtre} \eta^{2}_{> R}(x) |v^{(\varepsilon)} (t,x)|^{2} dxdt  = 0 \, .
\end{equation}
This will be deduced by an appropriate energy-type inequality.
%First of all we need that 
%\begin{equation}\label{EnergyOutsideForW}
%\limsup_{R \to \infty} \ \sup_{0 < \varepsilon < 1} \sup_{t \in (0,T)} \int_{0}^{T} \int_{\mathbb{R}^{2}} \eta^{2}_{> R}(x) |\nabla w^{\varepsilon} (t,x)|^{2} dx  = 0,
%\end{equation}
%which is true
%since $w$ solves the 2d Navier--Stokes problem. Here $\eta^{2}_{> R}$ is defined on $\mathbb{R}^{2}$ in a way analogous the one on $\Rtre$.
%For the proof of (\ref{EnergyOutsideForW}) we refer to \cite[Proposition 14.1]{Lem}. On the other hand its validity will be clear also by the argument below.
We take 
the scalar product of the first equation in~(\ref{CauchyNSForvExistenceMollified}) with the vector field~$2 \eta_{> R}^{2} v^{(\varepsilon)}$. 
After elementary manipulations we obtain 
\begin{eqnarray}\label{EnOutHard}
& &
\partial_{t} | v^{(\varepsilon)}_{> R} |^{2}  
  +    
2 |\nabla  v^{(\varepsilon)}_{> R}|^{2} =
\nabla \cdot Z_{R,\varepsilon}   +
\\ \nonumber %\label{EnOutHard1}
& + & 
 \left[ 
|v^{(\varepsilon)}|^{2} (v^{(\varepsilon)} * \rho^{\varepsilon}) 
 +  2P_{v^{(\varepsilon)}} v^{(\varepsilon)} 
 + 
|v^{(\varepsilon)}|^{2} w 
 \right] \cdot \nabla \eta_{> R}^{2} 
 + 2 |v^{(\varepsilon)}|^{2} |\nabla \eta_{>R}|^{2}  +
\\ \nonumber %\label{EnOutHard2}
 & + &
 2 \eta_{>R} (v^{(\varepsilon)} \cdot w ) v^{(\varepsilon)} \cdot \nabla \eta_{>R}
  + 
2  (v_{>R}^{(\varepsilon)} \cdot \nabla ) v_{>R}^{(\varepsilon)} \cdot w ,
\end{eqnarray}
 where 
 $$v^{(\varepsilon)}_{> R} :=     \eta_{> R}   \,  v^{(\varepsilon)}  \, ,  $$
 and
 \begin{equation}\nonumber
 Z _{R,\varepsilon} 
   :=      
 \eta_{>R}^{2} \left[ 
 \nabla (|v^{(\varepsilon)}|^{2}) 
 - |v^{(\varepsilon)}|^{2} (v^{\varepsilon} * \rho^{\varepsilon}) 
 - 2 P_{v^{(\varepsilon)}} v^{(\varepsilon)} 
 - 2 (v^{(\varepsilon)} \cdot w ) v^{(\varepsilon)} 
 - |v^{(\varepsilon)}|^{2} w
 \right].
 \end{equation}
We refer to~\cite[Proposition 14.1]{Lem} for the detailed computation in the case~$w=0$.  %the case $\widetilde{w}^{\varepsilon} = 0$.  
Since~$Z_{R,\varepsilon}$ %and all the terms of~(\ref{EnOutHard}) are 
is integrable on $(0,T) \times \Rtre$, uniformly in~$\varepsilon$,
we arrive to $(0 < t < T)$:
\begin{eqnarray}\label{ITLHSFinal}
 && 
 \textstyle
 \int_{\Rtre} | v^{(\varepsilon)}_{> R} |^{2} (t) 
+2  \int_{0}^{t} \int_{\Rtre}  |\nabla  v^{(\varepsilon)}_{> R}|^{2}
 \leq  
 \int_{\Rtre} | v_{> R} (0) |^{2}  +
 \\ \nonumber
  &+ &
  \textstyle
 \int_{0}^{t}\int_{\Rtre}
 \left[ 
 |v^{(\varepsilon)}|^{2} (v^{(\varepsilon)} * \rho^{\varepsilon}) 
+2P_{v^{(\varepsilon)}} v^{(\varepsilon)} 
+
|v^{(\varepsilon)}|^{2} w
\right]  \cdot \nabla \eta_{> R}^{2} + 2 |v^{(\varepsilon)}|^{2} |\nabla \eta_{>R}|^{2}  
+ 
\\ \nonumber
& + &
\textstyle
2  \int_{0}^{t}\int_{\Rtre}
 \eta_{>R} (v^{(\varepsilon)} \cdot w ) v^{(\varepsilon)} \cdot \nabla \eta_{>R}
%\int_{0}^{t} \int_{\Rtre} 
%2 |v^{(\varepsilon)}|^{2} \Delta \eta_{> R}
% -  4 (\nabla \eta_{> R} \cdot \nabla) v^{(\varepsilon)}  \cdot v^{(\varepsilon)} \eta_{> R}
+
(v_{>R}^{(\varepsilon)} \cdot \nabla ) v_{>R}^{(\varepsilon)} \cdot w \, ,
\end{eqnarray}
%for any $0 < t < T$.
Finally, since
$ | \nabla \eta_{> R} |, | \nabla \eta_{> R}^{2} |  \lesssim R^{-1}$ %  $ | \Delta \eta_{> R} | \lesssim R^{-2}$ 
and 
\begin{equation}\nonumber
\textstyle
 2 \int_{0}^{t}\int_{\mathbb{R}^{3}}  (v_{>R}^{(\varepsilon)} \cdot \nabla ) v_{>R}^{(\varepsilon)} \cdot w
 \leq
 \int_{0}^{t}\int_{\mathbb{R}^{3}} 
   |\nabla  v_{>R}^{(\varepsilon)}|^{2}
+ 
\int_{0}^{t} \| w \|^{2}_{L^{\infty}_{x}}(s) \big(  \int_{\mathbb{R}^{3}}  |v_{>R}^{(\varepsilon)} (s,x)|^{2} dx \big) ds,
\end{equation}
after absorbing $\int_{0}^{t}\int_{\mathbb{R}^{3}}  |\nabla  v_{>R}^{(\varepsilon)}|^{2} $ into the 
left hand side of (\ref{ITLHSFinal}), we obtain
\begin{equation}\nonumber    %%%% Withouth the %% is eqnarray
% & &
\textstyle
\int_{\Rtre} | v^{(\varepsilon)}_{> R} |^{2} (t,x) dx 
%+
% \int_{0}^{t} \int_{\Rtre}  |\nabla  v^{(\varepsilon)}_{> R}|^{2}
 \lesssim_{T}
% \\ \nonumber
%& \lesssim_{T} &  
\textstyle
R^{-1}   
+
 \int_{\Rtre} | v_{> R} (0,x) |^{2}dx   
 +
 \int_{0}^{t} \| w \|^{2}_{L^{\infty}_{x}}(s) \big(  \int_{\mathbb{R}^{3}}  |v_{>R}^{(\varepsilon)} (s,x)|^{2} dx \big) ds \, .
\end{equation}
%for any $0<t<T$.
Since $\| w \|^{2}_{L^{\infty}_{x}}(t)$ is integrable,
the Gr\"onwall inequality gives
\begin{equation}\label{GreatGron}
\int_{\Rtre} | v^{(\varepsilon)}_{> R} |^{2} (t,x)dx \lesssim_{T}
A e^{\mathcal{K}}, 
\quad
A := R^{-1} + \| \eta_{> R} v_{0}  \|^{2}_{L^{2}}, 
\quad
\mathcal{K} := \| w \|^{2}_{L^{2}_{t} L^{\infty}_{x}}
\end{equation}
for any $0<t<T$.
%This bound, the embedding $\dot{H}^{1}(\mathbb{R}^{2}) \hookrightarrow L^{\infty}(\mathbb{R}^{2})$ 
%and (\ref{EnergyOutsideForW}) imply the (\ref{EnergyOutside}) .
Since the right hand side of~(\ref{GreatGron}) does not depend on~$\varepsilon$ anymore and goes to zero as~$R \to \infty$, 
we obtain~(\ref{EnergyOutside}), so that the
proof is concluded.

\hfill $\Box$

\section{Small data}\label{Sect:SmallData}

When $\mathcal{K}$ is sufficiently small, we can improve the size of the regular set of
small perturbations of reference solutions.

\begin{theorem}\label{Theorem:DL}
There exists a constant $\delta_{4} > 0$ such that the following holds. 
Let 
$w$ be a reference solution of size $\mathcal{K} \leq \delta_{4}$ to the Navier--Stokes equation
with divergence free 
initial data $w_{0}$ (see Definition~\ref{Def:RefSol}).
For any $M > 1$, 
the set 
\begin{equation}\label{SmallAssSNU}
    \Pi_{M \delta_{4}, \bar{x}}
    :=
    \left\{ (t,x) \ : \ t > 
    \frac{|x-\bar{x}|^{2}}{M \delta_{4}} \right\}
  \end{equation}
is regular,
for every suitable weak solution $u \in L^{\infty}_{t}L^{2}_{x} \cap L^{2}_{t}\dot{H}^{1}_{x}$
to the Navier--Stokes equation with divergence free initial data~$u_{0} \in L^{2}$ such that
\begin{equation}\label{Hyp:Theorem:DL}
      \| |x-\bar{x}|^{-1/2} (u_{0} - w_{0})\|_{L^{2}}  \le \delta_{4} e^{-M^{2}/\delta_{4}}.
\end{equation}

\end{theorem}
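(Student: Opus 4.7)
I would revisit the proof of Theorem~\ref{MainThmPerturbationMoreGen} and replace its third-step cancellation argument (which pinned the aperture at $1/(10Z)$ and so to the fixed constant $\delta_0$) by a direct bootstrap that becomes effective precisely when both $\mathcal{K}$ and $\epsilon := \| |x-\bar{x}|^{-1/2}(u_0 - w_0)\|_{L^2}$ are very small. After translating so that $\bar{x} = 0$, changing variables $(t,y) = (t, x - \xi t)$, testing the perturbed local energy inequality against $\sigma_\mu(y)\chi(\delta|y|)$ and passing to the limit $\delta \to 0$, the same Caffarelli--Kohn--Nirenberg weighted interpolation estimates and Stein inequalities that appeared in the first step of the main proof yield, without any appeal to a stopping time, the globally valid integral inequality
$$
  a_\mu(t) + B_\mu(t) \leq \epsilon^2 + Z \int_0^t \big(|\xi|^2 + \dot B_\mu(\tau) + \|w_\xi(\tau)\|^r_{L^q}\big)\, a_\mu(\tau)\, d\tau,
$$
so that Gr\"onwall's lemma gives $a_\mu(t) \leq \epsilon^2 \exp\big(Z(|\xi|^2 t + B_\mu(t) + \mathcal{K})\big)$, with the quantity $B_\mu(t)$ itself appearing in the exponent.

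The decisive new observation is that an a priori bound on $a_\mu$ self-improves to a bound on $B_\mu$. Indeed, if $\sup_{s \leq t} a_\mu(s) \leq 1/(2Z)$, then the term $Z \int_0^t \dot B_\mu \cdot a_\mu$ absorbs into the left-hand side of the integral inequality and one obtains $B_\mu(t) \leq 2\epsilon^2 + \mathcal{K} + |\xi|^2 t$. Plugging this back into the Gr\"onwall bound gives $a_\mu(t) \leq \epsilon^2 e^{Z(2|\xi|^2 t + 2\mathcal{K} + 2\epsilon^2)}$. Restricting to $(t, t\xi) \in \Pi_{M\delta_4}$, i.e.\ $|\xi|^2 t < M\delta_4$, and using $\mathcal{K} \leq \delta_4$ and $\epsilon \leq 1$, the right-hand side is bounded by $\epsilon^2 e^{Z(2M\delta_4 + O(\delta_4))}$. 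The hypothesis $\epsilon \leq \delta_4 e^{-M^2/\delta_4}$ is comfortably strong enough to render this strictly smaller than $1/(2Z)$ for every $M \geq 1$, provided $\delta_4$ is fixed sufficiently small in terms of $Z$. A standard open/closed continuity argument then closes the bootstrap and produces a bound on $B_\mu(t)$ uniform in $\mu > 0$, valid throughout the time interval determined by $|\xi|^2 t < M\delta_4$.

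From the $\mu$-uniform bound on $B_\mu$ I would pass to $\mu \to 0$ and return to the $(t,x)$ variables exactly as in the final part of the proof of Theorem~\ref{MainThmPerturbationMoreGen}, obtaining $\int_0^t \int_{\Rtre} |x - \xi\tau|^{-1}|\nabla v(\tau,x)|^2 \, dx\, d\tau < \infty$. Together with the analogous integrability for the reference solution $w$ supplied by Lemma~\ref{Lemma:Postposto}, this verifies the Caffarelli--Kohn--Nirenberg criterion~\eqref{CKNConditionSB} at every point $(s, \xi s)$ with $0 < s < t$, so Lemma~\ref{Lemma:CKNLEmma} yields the regularity of the segment $L(t,\xi)$ for $u = v + w$. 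Taking the union over all $(t, \xi)$ with $|\xi|^2 t < M\delta_4$ gives exactly the regularity of $\Pi_{M\delta_4, \bar{x}}$.

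The main obstacle is calibrating the absolute constant $Z$ (which depends only on the admissible exponents $(r,q)$) against the threshold $\delta_4$: one must choose $\delta_4$ small enough that the schematic bootstrap condition of the form $\epsilon \leq C_Z\, e^{-Z M \delta_4}$ is implied, uniformly in $M \geq 1$ and $\mathcal{K} \leq \delta_4$, by the stated hypothesis $\epsilon \leq \delta_4 e^{-M^2/\delta_4}$. The quadratic exponent $M^2/\delta_4$ comfortably dominates the linear exponent produced by the bootstrap for all $M \geq 1$ once $\delta_4$ is small, and it is moreover the precise form needed for the subsequent deduction of Theorem~\ref{thm:Gap} in Section~\ref{Sect:SmallData}.
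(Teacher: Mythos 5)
Your argument is essentially correct, but it takes a genuinely different route from the paper. The paper's proof keeps the two--regime structure of Theorem~\ref{MainThmPerturbationMoreGen}: it introduces the stopping time $t^{*}$ defined through the window condition $\int_{s}^{s+T/M}\int\sigma_{\mu}|\nabla v_{\xi}|^{2}>M$ (which forces $B_{\mu}(t^{*})\le 2M^{2}$), runs the Gr\"onwall estimate up to $t^{*}$, and then for $t>t^{*}$ uses the exponentially damped test function $\psi_{\mu}=e^{-6ZB_{t^{*}_{n},\mu}}$ together with the restarted local energy inequalities at times $t^{*}_{n}\uparrow t^{*}$; the enlarged aperture $M\delta_{4}$ comes from the bound $\int\psi_{\mu}\le 2T/M$, which in turn relies on the window definition of $t^{*}$. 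You dispense with all of this: you only use the $t_{0}=0$ local energy inequality, derive the single global integral inequality of the paper's second step, and close a smallness bootstrap in which the absorption $Z\int\dot B_{\mu}a_{\mu}\le\tfrac12 B_{\mu}$ yields $B_{\mu}(t)\le 2\epsilon^{2}+|\xi|^{2}t+\mathcal{K}$, which fed back into Gr\"onwall gives $a_{\mu}(t)\le\epsilon^{2}e^{Z(2|\xi|^{2}t+2\mathcal{K}+2\epsilon^{2})}$; the hypothesis $\epsilon\le\delta_{4}e^{-M^{2}/\delta_{4}}$ comfortably beats the factor $e^{2ZM\delta_{4}}$ generated by $|\xi|^{2}t<M\delta_{4}$, so the bootstrap closes. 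I have checked the numerology and it works; your argument is shorter, avoids the exceptional-time bookkeeping of the restarted inequalities, and would in fact tolerate apertures of order $M^{2}/(Z\delta_{4})$. What the paper's construction buys is that it is the same machinery already set up for the large-$\mathcal{K}$ Theorem~\ref{MainThmPerturbationMoreGen}, so the small-$\mathcal{K}$ case is obtained by re-running a proof the reader has already seen with a different choice of $t^{*}$.

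One point you should make explicit: the ``standard open/closed continuity argument'' cannot be run directly on $\sup_{s\le t}a_{\mu}(s)$, because for a suitable weak solution $t\mapsto a_{\mu}(t)$ is only known to be lower semicontinuous (the solution is merely weakly $L^{2}$-continuous, and strongly continuous only at almost every time), so the set where your bootstrap hypothesis holds need not be closed. The repair is easy and stays within your scheme: since the integral inequality and hence the Gr\"onwall bound $\sup_{s\le t}a_{\mu}(s)\le\epsilon^{2}\exp\bigl(Z(|\xi|^{2}t+B_{\mu}(t)+\mathcal{K})\bigr)$ hold unconditionally for every $t$, run the continuity argument on the \emph{continuous, nondecreasing} function $B_{\mu}(t)$ (e.g.\ on the closed set where $B_{\mu}(t)\le 2\epsilon^{2}+|\xi|^{2}t+\mathcal{K}+1$), using the Gr\"onwall bound to convert control of $B_{\mu}$ into the strict improvement $B_{\mu}(t)\le 2\epsilon^{2}+|\xi|^{2}t+\mathcal{K}$ and thereby obtain openness. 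With this adjustment the proof is complete.
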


The size of the regular set \eqref{SmallAssSNU} increases indefinitely as long as we consider smaller perturbations of $w_0$. 
More precisely, if we take a sequence $u_0^{n}$ such that
$$\| |x-\bar{x}|^{-1/2} (u_{0}^{n} - w_{0})\|_{L^{2}} \to 0 \quad \mbox{as $n \to \infty$} \, ,$$
then we can find a divergent sequence of real numbers $M_n$ such that \eqref{Hyp:Theorem:DL} holds, so that 
that $ \Pi_{M_n \delta_{4}, \bar{x}}$ is a regular set for the corresponding weak solutions $u^{n}$. Notice that we clearly have
$ \Pi_{M_n \delta_{4}, \bar{x}} \to \{ t > 0\} $ as $n \to \infty$ (since $M_{n} \to \infty$).
The case $\mathcal{K}=0$ of Theorem \ref{Theorem:DL}, namely~$w=w_{0}=0$, 
has been proved in~\cite[Corollary 1.6]{DL2}. For a direct argument we refer to~\cite{R}.

\subsection{Proof of Theorem \ref{Theorem:DL}}\label{sec:proof2}

%Theorem \ref{Theorem:DL} can be proved by an adaptation of the argument used to prove Theorem 1.5 in \cite{DL2}.
%Part of the proof also overlaps with that of Theorem \ref{MainThmPerturbation}, so we only explain how to do the necessary modifications.

\subsubsection*{Idea of the proof}

Comparing with Theorem \ref{MainThmPerturbation}, when we look at the perturbed energy inequality,
we have no more trouble with the terms in which the reference solution $w$ is involved, since $w$ has been assumed to be
small ($\mathcal{K} \ll1$). This
gives us enough freedom to improve the size of the regular sets.
We again 
distinguish two time regimes $t \leq t^{*}$, $t > t^{*}$, but now we
choose $t^{*}$ in such a way that the term contributing to the size of the regular set becomes very small for $t > t^{*}$. 
Then we use the
(exponential) smallness assumption (\ref{SmallAssSNU}) on $u_{0} - v_{0}$ to control the weighted $L^{2}$ norm of~$u-w$, 
up to the time $t^{*}$. This permits to conclude the proof.

\begin{proof}

We restrict to $\bar{x} =0$; the general case follows by translation.
Let $v_{0} := u_{0} - w_{0}$, $v:=u-w$ and
denote $\epsilon := \| |x|^{-1/2} v_{0} \|_{L^{2}(\Rtre)}$.
For all $\xi \in \mathbb{R}^{3}$ and $T > 1$ we investigate     %\setminus
when 
$$
L(T,\xi) := \{ (s, \xi s) \ : \ s \in (0,T) \}
$$
is a regular set. 
We again change variables
\begin{equation}\label{NewCOV}
  (t, y) = (t, x - \xi t),
  \qquad
  v_{\xi}(t,y) = v(t,x),
  \qquad
  w_{\xi}(t,y) = w(t,x) \, ,
\end{equation}
and set
\begin{equation}\label{Def:SigmaNuBis}
  \sigma_{\mu}(y) := (\mu + |y|^{2})^{-\frac12},
\quad
\mu >0 \, .
\end{equation}
We define, for any
$M > 1$: 
\begin{equation}\nonumber%\label{DefSetOfBarSSF}
  \Gamma (M, T, \xi, \mu) := \left\{ s \in (0,T] \ : \
  \int_{s}^{s + T/M} \int_{\RT} 
  \sigma_{\mu}(y) |\nabla v_{\xi} (\tau , y)|^{2} \ dy d\tau  > M
  \right\}
\end{equation}
and 
\begin{equation}\label{DefOfBarSSF}
t^{*}(M, T, \xi, \mu) := 
\begin{cases}
  \inf \, \Gamma (M, T, \xi, \mu)    
  & \mbox{if} \quad \Gamma (M, T, \xi, \mu) \neq \emptyset    \\
  T   &   \mbox{otherwise} \, .
\end{cases}
\end{equation}
The following estimate is an immediate consequence of the definition of $t^{*}$,
\begin{equation}\label{BasicPropertiesSF}
  B_{\mu}(t^{*}) := \int_{0}^{t^{*}} \int_{\RT} \sigma_{\mu}(y)
  |\nabla v_{\xi}(\tau ,y)|^{2}  \ d \tau dy  \leq M(M+1)
  \le 
  2M^{2} \, .
  \end{equation}
If there exists $\mu^{*}$ such that $t^{*}(\cdot, \mu) = T$ for all $0< \mu < \mu^{*} $, 
taking the limit~$\mu \to 0$ into~\eqref{BasicPropertiesSF}, that we are allowed since the weights $\sigma_{\mu}(y)$ are increasing as $\mu \to 0$, 
we get
\begin{equation}\nonumber
  B(T) := \int_{0}^{T} \int_{\RT} |y|^{-1}
  |\nabla v_{\xi}(\tau ,y)|^{2}   dy d \tau   
  =
  \int_{0}^{T} \int_{\RT} \frac{|\nabla v(\tau ,x)|^{2}}{|x-\xi \tau|} dx d \tau   
  \le 
  2M^{2} \, .
  \end{equation}
% since the weights $\sigma_{\mu}(y)$ are increasing as $\mu \to 0$.
Using this bound and the argument at the end of Subsection \ref{SameArgument}, after the inequality~(\ref{FinalBound1}),
we show that $L(T,\xi)$ is regular. 

Thus the hard case turns out to be
$0 \leq t^{*}(\cdot, \mu) < T$ for a vanishing sequence of $\mu$. 
Noting that the quantity 
$t^{*}(\cdot, \mu)$ can not increase as $\mu \to 0$, this simply means that there exists
 some $\mu > 0$ such that $0 \leq t^{*}(\cdot, \mu) < T$. When this happens, we will write for simplicity
 $0 \leq t^{*} < T$. 
 We set
\begin{equation}\nonumber
a_{\mu}(t) := \int_{\Rtre} \sigma_{\mu}(y) |v_{\xi}(t,y)|^{2} dy = \int_{\Rtre} \frac{|v(t,x)|^{2}}{|x-\xi t|} dx.  
\end{equation}
As in Subsection~\ref{SSFNM}, see (\ref{Labella}), we have 
\begin{equation}\nonumber%\label{ABarBoundSF}
a_{\mu}(t) \leq 
\epsilon^{2} e^{ZA}, 
\quad
A := B_{\mu}(t^{*}) + \mathcal{K} + t^{*} |\xi|^{2},
\quad
\mbox{for all}
\quad
0 \leq t \leq t^{*},
\end{equation}
for a certain constant $Z >1$. 
We assume 
\begin{equation}\label{ResrtCondFirstSF}
| \xi |^{2} t^{*} \leq M^{2},
\qquad
\epsilon \leq 1,
\end{equation} 
so that, using also the (\ref{BasicPropertiesSF}), we obtain
\begin{equation}\label{Calca}
a_{\mu}(t) \leq e^{Z(M^{2} + \mathcal{K})} \epsilon, 
\quad
\mbox{for all}
\quad
0 \leq t \leq t^{*}, 
\end{equation}
where $Z$ is a suitably larger constant.

Let $t^{*}_{n} \to t^{*}$, with $t^{*}_{n} \leq t^{*}$, so that
the perturbed energy inequality 
\begin{equation}\label{GenInSF}
\begin{split}
  \textstyle
  \int_{\mathbb{R}^{3}}  
  |v_{\xi}(t,y)|^{2}  \phi(t,y) dy
  &+ 
  \textstyle
  2 \int_{t^{*}_{n}}^{t}\int_{\mathbb{R}^{3}} 
   |\nabla  v_{\xi}|^{2} \phi 
  \le 
  \int_{\mathbb{R}^{3}}  |v_{\xi}(t^{*}_{n},y)|^{2} \phi(t^{*}_{n},y) dy
  \\
  & 
  \textstyle
   \!\!\!\!\!\!\!\!\!\!\!\!\!\!\!\!\!\!\!\!\!\!\!\!\!\!\!\!\!\!\!\!\!\!\!\!\!\!\!\!\!\!\!\!\!
   + 
  \int_{t^{*}_{n}}^{t} \int_{\mathbb{R}^{3}} 
  |v_{\xi}|^{2} (\phi_{t} - \xi \cdot \nabla \phi + \Delta \phi)
  +
  \int_{t^{*}_{n}}^{t} \int_{\mathbb{R}^{3}} (|v_{\xi}|^{2} 
  + 2P_{v_{\xi}})v_{\xi} \cdot \nabla \phi
  \\
  &
  \textstyle
  \!\!\!\!\!\!\!\!\!\!\!\!\!\!\!\!\!\!\!\!\!\!\!\!\!\!\!\!\!\!\!\!\!\!\!\!\!\!\!\!\!\!\!\!\!
  +
  \int_{t^{*}_{n}}^{t}\int_{\mathbb{R}^{3}} 
  |v_{\xi}|^{2} (w_{\xi} \cdot \nabla \phi) 
  +
  \textstyle
   2
  \int_{t^{*}_{n}}^{t}\int_{\mathbb{R}^{3}}  
   (v_{\xi} \cdot w_{\xi}) (v_{\xi} \cdot \nabla \phi)  
  +  (v_{\xi} \cdot \nabla ) v_{\xi} \cdot w_{\xi} \phi,
\end{split}
\end{equation}
holds for the sequence of test functions we are going to define.
%Notice that the existence of such a sequence $\{\bar{s}_{n}\}$ is ensured by Corollary \ref{Cor:Immed}.
Recall that this is the perturbed energy inequality (\ref{eq:StrongPertGenInProofOLD}), after the change of variables (\ref{NewCOV}).
We choose
$$
\phi(t,y):=\psi_{\mu}(t)\sigma_{\mu}(y) \chi(\delta |y|) \, ,
$$
where $\delta > 0$, 
$\chi : [0,\infty) \to [0,\infty)$
is a smooth non increasing function such that
\begin{equation*}
  \chi =1 \ \text{on}\  [0,1],
  \qquad
  \chi =0  \ \text{on}\ [2, \infty],
\end{equation*} 
 $ \sigma_{\mu}(y)$ has been defined in (\ref{Def:SigmaNuBis})
and
\begin{equation*}
  \psi_{\mu}(t) := e^{-k B_{t^{*}_{n}, \mu}(t)},
  \qquad 
  B_{t^{*}_{n}, \mu}(t) := 
  \int_{t^{*}_{n}}^{t}
  \int_{\mathbb{R}^{3}} 
  \sigma_{\mu}(y) |\nabla v_{\xi}(\tau,y)|^{2} dy d\tau  \, ,
\end{equation*}
with $k$ a positive constant to be specified.
Again, we should consider vanishing sequences $\delta_{n}, \mu_{n}$, instead of $\delta, \mu$, in order to 
be sure that the exceptional times, starting from 
which~\eqref{eq:StrongPertGenInProofOLD}, and so \eqref{GenInSF}, 
may not be satisfied, for at least one of our test functions, has measure zero. We keep writing
$\delta, \mu$, for simplicity.
As in Subsection \ref{SameArgument}, by a repeated use of the 
Stein bound for singular integrals and of the Caffarelli--Kohn--Nirenberg inequality, and   
appropriate cancellations
between the left hand side and the right hand side of \eqref{GenInSF}, we obtain, taking~$k = 6Z$, 
\begin{equation}\label{LastInSF}
  \textstyle
  a_{\mu}(t)
  \psi_{\mu}(t)
  -a_{\mu}(t^{*}_{n})
  +
  \frac{1-\psi_{\mu}(t)}{6Z}
  \le
  |\xi|^{2} \int_{t^{*}_{n}}^{t}
  \psi_{\mu}(s)ds+
  \frac{1}{5} \int_{t^{*}_{n}}^{t}
  \|  w(s,\cdot) \|^{r}_{L^{q}(\Rtre)} ds \, ;
\end{equation}
compare with the inequality (\ref{LastInPreq}).

The second term of the right hand side is immediately bounded by $\frac{1}{5} \mathcal{K}$, that we have assumed to be small.
In order to bound the first term of the right hand side in an efficient way, we need to 
take advantage of the key definition \eqref{DefOfBarSSF}
which implies
$$
B_{t^{*}\!\!, \, \mu}(t) :=
\int_{t^{*}}^{t}
  \int_{\mathbb{R}^{3}} 
  \sigma_{\mu}(y) |\nabla v_{\xi}(\tau,y)|^{2} dy d\tau 
  \geq  M
$$ 
for all $t \geq t^{*} + T/M$. 
%Since 
%$B_{\bar{s},\mu}(t) \to B_{\bar{s}}(t)$ pointwise (as $\mu \to 0$) we also have $B_{\bar{s},\mu}(t) > M/2$ for all $t \geq \bar{s} + T/M$
%and for $\mu$ small enough.
Thus (we recall that $\psi_{\mu}(t) =  e^{-6Z B_{t^{*}_{n}, \mu}(t)}$ and $M,Z > 1$)

\begin{eqnarray}\nonumber
  \textstyle
  & & 
  \int_{t^{*}_{n}}^{t^{*} + T} \psi_{\mu} (s)  ds =
  \int_{t^{*}_{n}}^{t^{*} + T} e^{- 6Z B_{t^{*}_{n},\mu}(s)}  ds
 \\ \nonumber
 & \leq & 
\int_{t^{*}_{n}}^{t^{*} + T} e^{- 6Z B_{t^{*}\!\!, \, \mu}(s)} ds
=  
\int_{t^{*}_{n}}^{t^{*} + T/M} e^{- 6Z B_{t^{*}\!\!, \, \mu}(s)}  ds 
+ \int_{t^{*} + T/M}^{t^{*} + T} e^{- 6Z B_{t^{*}\!\!, \, \mu}(s)}  ds
  %\int_{\bar{s}_{n}}^{\bar{s} + T} e^{- 6Z B_{\bar{s},\mu}(s)} \ ds 
  \\ \nonumber
  \textstyle
  & \leq &
  t^{*} - t^{*}_{n} +
  \frac{T}{M} + e^{-6ZM} \left( T - \frac{T}{M} \right)
  <
  t^{*} - t^{*}_{n} 
  +  \frac{2T}{M}
  - e^{-6ZM}\frac{T}{M} \, .
\end{eqnarray}
Thus, since $t^{*}_{n} \to t^{*}$, we can choose $n$ large enough so that
\begin{equation}\nonumber%\label{eq:estintpsi}
\int_{t^{*}_{n}}^{t^{*} + T} \psi_{\mu} (s)  ds 
\leq \frac{2T}{M}.
\end{equation}
Consequently\begin{equation}\nonumber%\label{eq:Int2SF}
  \textstyle
  a_{\mu}(t)
  +(\frac{1}{6Z}
  - \frac{1}{5} \mathcal{K}
  -e^{Z(M^{2} + \mathcal{K})} \epsilon
  -2|\xi|^{2}\frac{T}{M})
  e^{6Z B_{t^{*}_{n},\mu}(t)}
  \le
  \frac{1}{6Z},
\end{equation}
for $t^{*} \leq t \leq t^{*} +T$. 
Indeed, this follows by the inequality (\ref{LastInSF}), once we 
recall~$a_{\mu}(t^{*}_{n}) \leq e^{Z(M^{2}+\mathcal{K})} \epsilon$,
see (\ref{Calca}), and we use the estimates we have just proved for its right hand side.

We take $\mathcal{K}$ and $\epsilon$ such that
\begin{equation}\label{SmallAssIPSF}
 \mathcal{K} \leq \frac{1}{6Z},
\qquad
e^{Z(M^{2} + \mathcal{K})} \epsilon \leq \frac{1}{30Z},
\end{equation}
thus
\begin{equation}\label{eq:Int3SF}
  \textstyle
  a_{\mu}(t)
  +(\frac{1}{10Z} 
  -2|\xi|^{2}\frac{T}{M})
  e^{6Z B_{t^{*}_{n},\mu}(t)}
  \le
  \frac{1}{6Z}.
\end{equation}
We furthermore assume that
\begin{equation}\label{eq:condxi2SF}
  \textstyle
  (\frac{1}{10Z} 
    -2|\xi|^{2}\frac{T}{M})>0
  \quad \mbox{namely},
   \quad 
  |\xi|^{2}T< \frac{M}{20Z},
\end{equation}
which is stronger than~\eqref{ResrtCondFirstSF}, {\it i.e.}
$|\xi|^{2} t^{*} \le M^{2}$, since $t^{*} \le T$ and $M,Z > 1$.  
%Since the (\ref{eq:condxi2SF}), 
In this way, the inequality (\ref{eq:Int3SF}) immediately implies 
$$
B_{t^{*}\!\!, \mu}(t) \leq B_{t^{*}_{n}, \mu}(t)  < \infty,
\quad
\mbox{uniformly in $\mu >0$} \, , 
$$
for all $t^{*} \leq t \leq t^{*} +T$, as long as~(\ref{eq:condxi2SF}) holds.
On the other hand, we already know that  
$B_{\mu}(t^{*}) \leq 2M^{2}$, see (\ref{BasicPropertiesSF}), 
so that, since $B_{\mu}(t) = B_{\mu}(t^{*}) + B_{t^{*}\!\!,\mu} (t)$, we have proved
\begin{equation}\label{PTTL}
%\textstyle
B_{\mu}(t) := \int_{0}^{t} \int_{\RT} \sigma_{\mu}(y)
  |\nabla v_{\xi}(\tau ,y)|^{2}  \ dy d \tau  < \infty,  \quad \text{uniformly in $\mu >0$} \, ,
\end{equation} 
for all $0 \leq t \leq t^{*} +T$,
as long as~(\ref{eq:condxi2SF}) holds. Again, we mean that~$B_{\mu}(t)$ is bounded by 
a constant that is independent on $\mu$. This constant may however it may increase indefinitely as 
as~$|\xi|^{2}T$ approaches $\frac{M}{20 Z}$, but this does 
not affect our argument.  
Since the weights~$\sigma_{\mu}(y)$ are increasing as~$\mu \to 0$, and they converges to~$|y|^{-1}$, we can pass to 
the limit in~\eqref{PTTL},
so that
\begin{equation}\nonumber
%\textstyle
B(t) := \int_{0}^{t} \int_{\RT} |y|^{-1}
           |\nabla v_{\xi}(\tau ,y)|^{2}  \ dy d \tau 
            < \infty  \, , %\text{for all} \quad 0 \leq t \leq t^{*} +T,
\end{equation} 
under the same conditions.
In particular, going back to the old variables,
\begin{equation}\nonumber
B(T) = \int_{0}^{T} \int_{\Rtre} \frac{| \nabla v(\tau,x)|^{2}}{|x - s \xi |} \ dx d \tau  < \infty \, ,
\end{equation}
and we have already observed (see Subsection~\ref{SameArgument}) that this implies that
$L(T,\xi)$
is a regular set. 

Summing up, if we assume (\ref{SmallAssIPSF}) and (\ref{eq:condxi2SF}), then
$L(T,\xi)$ is regular. Notice that the condition (\ref{SmallAssIPSF}) is ensured 
by
$$
\mathcal{K} \leq \delta_{4},
\quad
\epsilon \leq \delta_{4} e^{- M^{2} / \delta_{4}} \, ,
$$
once we choose $\delta_{4} = 1/(90Z^{2})$. 
Under this choice, the condition (\ref{eq:condxi2SF}) is implied by
\begin{equation}\nonumber%\label{eq:}
  \textstyle
  (T,T \xi)\in\Pi_{M\delta_{4}},
  \qquad
  \Pi_{M\delta_{4}}:=
  \{(t,x)\in (0,\infty)\times \mathbb{R}^{3}
  \colon t > \frac{|x|^{2}}{M\delta_{4}}\} \, ,
\end{equation}
and the proof is completed because 
$\Pi_{M\delta_{4}}$
is the union of such segments for arbitrary $T > 1$.

\end{proof}

\subsection{Proof of Theorem \ref{thm:Gap}}

We first recall (a partial version of) the Kato~$L^{3}$ theorem, which is needed in order to prove Theorem \ref{thm:Gap}.  

\begin{theorem}[\cite{Kato}]\label{KatoThm}
There exists a constant $\varepsilon_{1} > 0$ such that the following holds.
  If 
  $w_{0} \in L^{2} \cap L^{3}$ is a divergence free vector field
  such that
  \begin{equation}\label{SmallAssInGlobalThm}
    \| w_{0} \|_{L^{3}}
    < \varepsilon_{1} \, ,
  \end{equation}
  then there exists a 
  global unique smooth solution $w \in C_{b}([0,\infty); L^{2}_{x}) \cap L^{2}_{t}\dot{H}^{1}_{x}$ 
  to the Navier--Stokes equation with data $w_{0}$, 
  and 
  \begin{equation}\label{NotInfty} 
    \| w \|_{L^{5}_{t}L^{5}_{x}}
    %+
    %\sup_{t>0} t^{1/2} \| u \|_{L^{\infty}_{x}}(t)    
    \leq C
     \| w_{0} \|_{L^{3}}.
  \end{equation}
\end{theorem}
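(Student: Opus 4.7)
The plan is to prove Kato's theorem by a classical Picard iteration for the mild formulation
\begin{equation*}
w(t) = e^{t\Delta} w_0 - B(w,w)(t), \qquad B(u,v)(t) := \int_0^t e^{(t-s)\Delta}\, \mathbb{P}\, \nabla\cdot(u\otimes v)(s)\, ds,
\end{equation*}
where $\mathbb{P}$ is the Leray projector onto divergence-free fields. The iteration will be set in the scaling-invariant space $L^5_t L^5_x$ on $(0,\infty)\times\mathbb{R}^3$: the couple $(r,q)=(5,5)$ satisfies the Serrin-admissibility $2/5+3/5=1$, which is exactly compatible with the scaling of the $L^3$-norm of the initial datum.

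The argument rests on two estimates. The linear bound
\begin{equation*}
\|e^{t\Delta} w_0\|_{L^5_t L^5_x} \le C_0\, \|w_0\|_{L^3}
\end{equation*}
is the Fujita--Kato maximal estimate: one concrete route is the embedding $L^3\hookrightarrow \dot B^{-2/5}_{5,\infty}(\mathbb{R}^3)$ combined with the heat-flow characterization of homogeneous Besov norms, or equivalently a Marcinkiewicz real-interpolation argument that upgrades the pointwise decay $\|e^{t\Delta} w_0\|_{L^5_x}\lesssim t^{-1/5}\|w_0\|_{L^3}$ from weak-$L^5_t$ to strong $L^5_t$. The bilinear bound
\begin{equation*}
\|B(u,v)\|_{L^5_t L^5_x} \le C_1\, \|u\|_{L^5_t L^5_x}\, \|v\|_{L^5_t L^5_x}
\end{equation*}
will be obtained by combining the H\"older bound $\|u\otimes v\|_{L^{5/2}_x}\le \|u\|_{L^5_x}\|v\|_{L^5_x}$, the $L^p$-continuity of $\mathbb{P}$, the gradient heat-kernel estimate $\|e^{s\Delta}\nabla f\|_{L^5_x}\lesssim s^{-4/5}\|f\|_{L^{5/2}_x}$, and the Hardy--Littlewood--Sobolev inequality applied to the resulting time convolution (the relevant indices $(5/2,5)$ and the kernel $s^{-4/5}$ satisfy $1-4/5=2/5-1/5$).

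Granted these two estimates, the Picard iteration $w_{n+1}=e^{t\Delta}w_0-B(w_n,w_n)$ in the closed ball $\{w:\|w\|_{L^5_t L^5_x}\le 2C_0\|w_0\|_{L^3}\}$ is a contraction as soon as $\|w_0\|_{L^3}<\varepsilon_1:=1/(8C_0 C_1)$, producing the unique global mild solution satisfying \eqref{NotInfty}. To derive $w\in C_b([0,\infty); L^2_x)\cap L^2_t\dot H^1_x$ from the extra assumption $w_0\in L^2$, I would invoke Serrin's regularity criterion: the inclusion $w\in L^5_t L^5_x$ with $2/5+3/5=1$ forces $w$ to be smooth on $(0,\infty)\times \mathbb{R}^3$, so the equation may be tested against $w$ itself. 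The divergence-free cancellation eliminates the transport term and yields the energy identity $\|w(t)\|_{L^2}^2+2\int_0^t\|\nabla w\|_{L^2}^2=\|w_0\|_{L^2}^2$, from which strong $L^2$-continuity follows by combining this identity with $\partial_t w\in L^2_{\rm loc}((0,\infty); H^{-1})$. The principal obstacle is the linear estimate, where the direct heat-kernel bound only provides weak-$L^5_t$ integrability and a genuine refinement via Littlewood--Paley/Besov or real interpolation is required; the bilinear estimate and the energy/regularity upgrade are routine.
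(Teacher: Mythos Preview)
Your proof sketch is correct and follows the standard Kato fixed-point argument in the critical space $L^{5}_{t}L^{5}_{x}$: the linear estimate via the Besov embedding $L^{3}\hookrightarrow \dot B^{-2/5}_{5,5}$ and its heat-flow characterization, the bilinear estimate via the gradient heat-kernel bound and Hardy--Littlewood--Sobolev in time, and the contraction mapping. The upgrade to $C_{b}([0,\infty);L^{2}_{x})\cap L^{2}_{t}\dot H^{1}_{x}$ by combining the $L^{5}_{t}L^{5}_{x}$ regularity with the energy identity is also standard.

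However, note that the paper does not actually prove this statement: Theorem~\ref{KatoThm} is quoted from Kato's paper~\cite{Kato} (as the phrase ``We first recall (a partial version of) the Kato $L^{3}$ theorem'' indicates) and used as a black box in the proof of Theorem~\ref{thm:Gap}. So there is no proof in the paper to compare against; your argument is exactly the kind of proof one would find in \cite{Kato} or in the standard references such as \cite{Lem}.
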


\begin{remark}\label{EndRemark}
If we also have
$w_{0} \in L^{2}(|x-x'|^{-1}dx)$, for some $x' \in \Rtre$, then $w$ is a reference solution of 
size~$\mathcal{K} = \| w \|^{5}_{L^{5}_{t}L^{5}_{x}}$; see Definition \ref{Def:RefSol}.
\end{remark}

Theorem~\ref{thm:Gap} covers the gap between the regularity Theorem~\ref{CKNSmallData} and the full regularity of 
solutions with small $L^{3}$ initial data. A similar phenomenon has been observed in~\cite{DL2} where
some additional angular integrability to the data has been required in order to gain regularity. The connection 
between smoothness and higher angular integrability of solutions to the Navier--Stokes equation 
has also been observed in \cite{Ren}. This is not surprising since the basic inequalities that 
are used to handle the local energy estimates, like weighted bound for the Riesz transform and the 
Caffarelli--Kohn--Nirenberg inequality~\eqref{CKNInequalityCKNVersion}, improves under additional angular integrability assumptions; 
see \cite{CLuc}, \cite{DL}, \cite{DenapoliDrelichmanDuran10-a}.

We prove Theorem~\ref{thm:Gap} when $\bar{x} = 0$, the general case follows by translation.
In order to apply Theorem \ref{Theorem:DL}, an
appropriate decomposition of the initial data is required.

For any $s >0$, we split
\begin{equation}\nonumber
u_{0}= \mathbb{P} u_{0, \leq s} + \mathbb{P} u_{0, > s} =: w_{0} + v_{0},
\end{equation}
where 
\begin{equation}\nonumber
u_{0, \leq s} (x) := 
\left\{
\begin{array}{lll}
u(x)  & \mbox{if} & |x| |u(x)| \leq s ,
\\
0  &  \mbox{otherwise} .  &
\end{array}
\right.
\end{equation}
Let us recall that $\mathbb{P}$ is the projection onto the divergence free vector fields subspace, and
it can be represented as $\mathbb{P} = Id + (\mathcal{R} \otimes \mathcal{R}) \cdot$. 
This decomposition satisfies the key estimates 
\begin{equation}\label{eq:decompSF}
  \begin{array}{lcl}
  \| w_{0}\|_{L^{3}(\Rtre)} 
  &\leq & C s^{1 - p/3} 
  \| |x|^{\alpha} u_{0}  \|^{ p/3}_{L^{p}(\Rtre)}
  \\
  && 
  \\
  \| |x|^{-1/2} v_{0} \|_{L^{2}(\Rtre)}
  &\leq & C s^{1-p/2} 
    \| |x|^{\alpha} u_{0}  \|^{p/2}_{L^{p}(\Rtre)}  \, ,
  \end{array}
\end{equation}
where $2<p<3$ and $\alpha = 1 - 3/p$.
Indeed, since the Riesz transform $\mathcal{R}$, and so $\mathbb{P}$, are bounded on~$L^{3}$ 
and on $L^{2}(|x|^{-1}dx)$,   %~$|x|^{-1}$ is an~$A_2$-weight; .
see for instance~\cite{Stein}, the estimates~\eqref{eq:decompSF} are consequence of   % $ \mathbb{P} = Id - \mathcal{R} \otimes \mathcal{R}$,
the elementary estimates
\begin{equation}\label{eq:decompSFSimple}
  \begin{array}{lcl}
  \|   u_{0, \leq s}  \|_{L^{3}(\Rtre)} 
  &\leq & C s^{1 - p/3} 
  \| |x|^{\alpha} u_{0}  \|^{ p/3}_{L^{p}(\Rtre)}
  \\
  && 
  \\
  \| |x|^{-1/2}  u_{0, > s} \|_{L^{2}(\Rtre)}
  &\leq & C s^{1-p/2} 
    \| |x|^{\alpha} u_{0}  \|^{p/2}_{L^{p}(\Rtre)}  \, .
  \end{array}
\end{equation}
We choose
\begin{equation*}
  s = \frac{p-2}{3-p}
\end{equation*}
so that
\begin{equation}\label{eq:decompSFTheta}
  \begin{array}{lcl}
  \| w_{0}\|_{L^{3}(\Rtre)} 
  &\leq & C \theta_{1}(p) 
  \| |x|^{\alpha} u_{0}  \|_{L^{p}(\Rtre)}^{ p/3}
  \\
  && 
  \\
  \| |x|^{-1/2} v_{0} \|_{L^{2}(\Rtre)}
  &\leq & C \theta_{2}(p) 
    \| |x|^{\alpha} u_{0}  \|_{L^{p}(\Rtre)}^{p/2}  \, ,
  \end{array}
\end{equation}
for some $C > 1$, where
$ \theta_{1}(p), \theta_2(p) $ have been defined in (\ref{def:theta12}).

Then we choose $\delta_{1} = \mathcal{C}^{-1} \min (1, \delta_{4}, \varepsilon_1)$, where $\delta_{4}$ is the small constant 
in Theorem \ref{Theorem:DL}, $\varepsilon_1$ is the small constant in the Kato's Theorem \ref{KatoThm}
and $\mathcal{C}^{1/10}$ is a constant larger than the ones in (\ref{eq:decompSFTheta}), (\ref{NotInfty}). %We also take $\mathcal{C} \geq 1$.
By (\ref{eq:decompSFTheta}) and the first assumption in (\ref{ILSAGAP}),
\begin{equation}\label{TT1SF}
\| w_{0} \|_{L^{3}}  \leq  \mathcal{C}^{1/10} \theta_1(p) \| |x|^{\alpha} u_{0}  \|_{L^{p}(\Rtre)}^{p/3} \leq \mathcal{C}^{1/10} \delta_{1} \, .  % \leq  \delta_{4}.
\end{equation}
By (\ref{NotInfty}) and (\ref{TT1SF}),
\begin{equation}\label{TT2SF}
\| w \|^{5}_{L^{5}_{t}L^{5}_{x}}  \leq  \mathcal{C}^{1/2} \| w_{0} \|^{5}_{L^{3}}   \leq  \mathcal{C} \delta_{1}^{5}  \leq  \mathcal{C} \delta_{1}  \leq  \delta_{1} \, .
\end{equation}
By (\ref{eq:decompSFTheta}) and the second assumption in (\ref{ILSAGAP}), %(remember that $\| |x|^{\alpha} u_{0}  \|_{L^{p}(\Rtre)} < \delta < 1$)
\begin{equation}\label{TT3SF}
\| |x|^{-1/2} v_{0} \|_{L^{2}}   \leq   \mathcal{C}^{1/10} \theta_2(p) \| |x|^{\alpha} u_{0}  \|_{L^{p}(\Rtre)}^{p/2} 
\leq \mathcal{C}^{1/10} \delta_{1} e^{- M^{2}/\delta_{1}} \leq \delta_{4} e^{- M^{2}/\delta_{4}} \, .
\end{equation}
Thanks to (\ref{TT2SF}), (\ref{TT3SF}) and recalling Remark \ref{EndRemark}\footnote{
Notice that $w_{0} \in L^{2}(|x-x'|^{-1}dx)$. Indeed we have assumed $u_{0} \in L^{2}(|x-x'|^{-1}dx)$ and we 
can estimate~$\||x-x'|^{-1/2} w_{0} \|_{L^{2}} = \| |x-x'|^{-1/2} \mathbb{P}u_{0, \leq s} \|_{L^{2}} \leq \| |x-x'|^{-1/2} u_{0} \|_{L^{2}} $.   }
we can apply Theorem \ref{Theorem:DL}
%with $X_0 = L^{3}(\Rtre)$
to conclude that $\Pi_{M\delta_{4}}$ is a regular set for $u$. 
Since $\Pi_{M\delta_{1}} \subset \Pi_{M\delta_{4}}$ the proof is complete.

\section{Appendix}\label{sec:ES}

\begin{lemma}\label{Lemma:PertGenEnIneq}
If 
$u \in L^{\infty}_{t}L^{2}_{x} \cap L^{2}_{t}\dot{H}^{1}_{x}$ is a suitable weak solution to the Navier--Stokes equation 
with pressure $P_{u}$ and 
divergence free data $u_0 \in L^{2}$
and $w$ is a reference solution (see Definition~\ref{Def:RefSol}) with pressure $P_{w}$ and divergence free data~$w_0 \in L^{2}$, 
then
the difference~$u-w$ is a suitable weak solution to the 
perturbed Navier--Stokes equation~\eqref{ModProbInPrelimin}, around the solution~$w$, with pressure~$P_{u-w}:=P_{u} - P_{w}$
and data~$u_{0} - w_{0}$.
\end{lemma}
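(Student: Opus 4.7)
The three structural properties are immediate. First, $v:=u-w$ lies in $L^\infty_t L^2_x\cap L^2_t\dot H^1_x$ since both $u$ and $w$ do, and $v(t)\to u_0-w_0$ weakly in $L^2$ as $t\to 0^+$ because $u(t)\to u_0$ weakly in $L^2$ and $w(t)\to w_0$ strongly in $L^2$ (Remark \ref{Rem:Uniqueness}). Second, subtracting the two Navier--Stokes systems and using the algebraic identity
\[
(u\cdot\nabla)u-(w\cdot\nabla)w=(v\cdot\nabla)v+(v\cdot\nabla)w+(w\cdot\nabla)v
\]
shows that $(v,P_u-P_w)$ solves \eqref{ModProbInPrelimin} in $\mathcal D'$. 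Third, from \eqref{RecoverP}, $P_u-P_w=\mathcal{R}\otimes\mathcal{R}\cdot(u\otimes u-w\otimes w)=\mathcal{R}\otimes\mathcal{R}\cdot(v\otimes v+v\otimes w+w\otimes v)$, and since $\mathcal{R}_i\mathcal{R}_j=\mathcal{R}_j\mathcal{R}_i$, this is exactly $\mathcal{T}\cdot(v\otimes v)+2\mathcal{T}\cdot(v\otimes w)$ with $\mathcal{T}=\mathcal{R}\otimes\mathcal{R}$, which enjoys the required weighted bounds.

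The main point is the perturbed local energy inequality \eqref{eq:StrongPertGenInProofOLD}, which I would derive first for $t_0=0$ by combining three ingredients. (a) The local energy inequality \eqref{GenEnIneq} for the suitable weak solution $u$, tested against a nonnegative $\phi$. (b) A local energy identity for $w$ obtained by multiplying the $w$-equation by $2\phi w$ and integrating; this is a bona fide identity because $w$ is smooth and bounded on the support of $\phi$ (Remark \ref{Rem:Uniqueness}). (c) A cross identity for $\int (u\cdot w)\phi$ obtained by testing the weak formulation of the $u$-equation against the smooth test vector field $2\phi w$ and the $w$-equation against $2\phi u$, then adding. Each operation is legitimate because $w$ is smooth and $u\in L^\infty_t L^2_x\cap L^2_t \dot H^1_x$, so all integrals converge (including the pressure one, since $P_u\in L^{3/2}_{\mathrm{loc}}$ and $w$ is bounded on $\mathrm{supp}\,\phi$). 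Writing $|v|^2=|u|^2-2u\cdot w+|w|^2$, the combination (a)$-2$(c)$+$(b) yields the desired inequality once the nonlinear and pressure contributions are reorganized. The essential algebraic step is to rewrite the cross nonlinearities using $\nabla\cdot v=\nabla\cdot w=0$ and the pointwise identity
\[
(v\cdot\nabla)w\cdot v=(v\cdot\nabla)(v\cdot w)-(v\cdot\nabla)v\cdot w,
\]
which, after integration by parts in the first summand against $\phi$, produces exactly the two surface-type terms $2(v\cdot w)v\cdot\nabla\phi$ and $2(v\cdot\nabla)v\cdot w\,\phi$ appearing on the right-hand side of \eqref{eq:StrongPertGenInProofOLD}, while the divergence-free character of $w$ turns $2(w\cdot\nabla)v\cdot v\,\phi$ into $-|v|^2 w\cdot\nabla\phi$.

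To pass from $t_0=0$ to the restarted version for a.e.\ $t_0>0$, I would employ the standard time-truncation argument. Fix $\phi(t,y)=\psi(t)\phi_1(y)$ and replace $\psi(t)$ by $\eta_\varepsilon(t)\psi(t)$, where $\eta_\varepsilon$ is a smooth nondecreasing cut-off equal to $0$ on $(-\infty,t_0-\varepsilon]$ and to $1$ on $[t_0,\infty)$. Applying the $t_0=0$ inequality with this modified test function and using the $L^2$-weak continuity of $v$ in time, the distributional time derivative $\eta_\varepsilon'$ converges to a Dirac mass at $t_0$, which picks out the value $\int|v(t_0)|^2\phi_1$ at every Lebesgue point $t_0$ of the function $\tau\mapsto\int |v(\tau)|^2\phi_1$; the remaining time-integrated terms converge by dominated convergence. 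This yields \eqref{eq:StrongPertGenInProofOLD} for a.e.\ $t_0>0$ and arbitrary non-negative $\phi$. The same mechanism applied to the global inequality \eqref{ClassicEnergy} gives \eqref{RestClassicEnergy}, as indicated after \eqref{RestClassicEnergy}.

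The main obstacle is the bookkeeping in step (c): testing the distributional equation for $u$ against the vector test function $2\phi w$ requires a careful use of approximation, since the standard suitability only guarantees the local energy inequality against scalar test functions times $u$. I would handle this by convolving $u$ in time with a mollifier (which preserves the divergence-free constraint and gives a smooth function in time) to obtain a rigorous identity, pass to the limit using the $L^\infty_t L^2_x\cap L^2_t\dot H^1_x$ bounds together with the smoothness of $w$, and then verify that the pressure term $-\int\nabla P_u\cdot(\phi w)=\int P_u\, w\cdot\nabla\phi$ makes sense thanks to $P_u\in L^{3/2}_{\mathrm{loc}}$ and the compact support of $\phi$.
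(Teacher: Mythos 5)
Your argument is correct and follows essentially the same route as the paper: decompose $|v|^{2}=|u|^{2}-2u\cdot w+|w|^{2}$, combine the local energy inequality for $u$, the energy identity for the smooth reference solution $w$, and a mollification-based cross identity for $\int u\cdot w\,\phi$, then obtain the restarted inequality at almost every $t_{0}>0$ via a mollified step function evaluated at Lebesgue points of $t\mapsto\int|v|^{2}\phi$. The only cosmetic difference is that the paper mollifies $u$ in the space-time variables (rather than in time only) to justify the cross identity rigorously; the limiting argument and all the algebraic identities you use are otherwise identical.
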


\begin{proof}
The only non trivial thing to prove is that the perturbed local energy inequality~\eqref{ModSolDef} is satisfied.
This is formally justified once we take the scalar product of the momentum equation for $v$, namely the first 
equation in~\eqref{ModProbInPrelimin}, against the vector field~$2 \phi v$, we integrate over
$(t_{0},t) \times \Rtre$ and then by parts. 
A rigorous proof requires the suitability of $u$ and the fact that $w$ is a reference solution.

We first consider the case~$t_{0} =0$.  
Since $(u,P_{u})$ and $(w,P_{w})$ satisfy the local energy inequality~\eqref{GenEnIneq}, taking the difference, is straightforward to 
check that~\eqref{ModSolDef} %, with $t_{0} =0$, 
is a consequence of 
\begin{align} \nonumber %\label{I1Term}
  0 & =  
  2
\textstyle
  \int_{\mathbb{R}^{3}}  
  u \cdot w \phi(t,x) dx  
 - 2
  \int_{\mathbb{R}^{3}}  u_{0} \cdot w_{0} \phi(0,x) dx 
- 2
  \int_{0}^{t} \int_{\mathbb{R}^{3}} 
  u \cdot w \phi_{t} 
  \\ \nonumber % \label{I2Term}
  & - 
 \textstyle
 2
   \int_{0}^{t}\int_{\mathbb{R}^{3}} 
  (u \! \cdot \! w) w \! \cdot \! \nabla \! \phi
   + 2 \int_{0}^{t}\int_{\mathbb{R}^{3}}  (u \! \cdot \! \nabla  ) u  \! \cdot \! w \phi
 -  2 \int_{0}^{t}\int_{\mathbb{R}^{3}}  (w \! \cdot \! \nabla ) u \!  \cdot \! w \phi
  \\ \nonumber % \label{I3Term}
 &  + 
 \textstyle
  4 \int_{0}^{t}\int_{\mathbb{R}^{3}}
   \phi \nabla u \cdot \nabla w  
  - 2 \int_{0}^{t} \int_{\mathbb{R}^{3}} 
  u \cdot w \Delta \phi
\\ \nonumber %\label{I4Term}
  & -  
 \textstyle
  2  \int_{0}^{t} \int_{\mathbb{R}^{3}}   
  P_{w} u  \cdot \nabla \phi 
   -
  2 \int_{0}^{t} \int_{\mathbb{R}^{3}}   P_{u} w \cdot \nabla \phi   \, .
% \\ \nonumber % \label{eq:MainIdentGenEnIneq}
%& =: 
%\!\!\!\!
%I_{1} +I_{2} + I_{3} + I_{4} = 0,
\end{align}
Again, this identity is formally justified once we take the scalar product of the momentum equation
for $u$ against the vector field $2 \phi w$, the scalar product of the momentum equation
for $w$ against $2 \phi u$, we integrate over $(0,t) \times \Rtre$ and then by parts.
Since $u$ is a suitable solution, this procedure is rigorous once we consider, rather than the field
$2 \phi u$, the mollified field~$2 \phi u^{\varepsilon}$, letting eventually $\varepsilon \to 0$. %and $2 \phi w^{\varepsilon}$ 
%For this we need to take advantage of the fact that $w$ also belongs to $L^{r}_{t}L^{q}_{x}$ with $(r,q)$ admissible.
The mollification has to be in the space-time variables, 
namely $f^{\varepsilon}(t,x):=  \int_{\mathbb{R}^{4}} f(s,y)  \rho^{\varepsilon}(t-s, x-y) dyds$,
where $\rho^{\varepsilon}:= \varepsilon^{-4} \rho(\varepsilon^{-4}\cdot)$, $0 \leq \rho \in C^{\infty}_{c}(\mathbb{R}^{4})$ 
and $\int_{\mathbb{R}^{4}} \rho =1$. 
The limit $\varepsilon \to 0$ can be easily justified recalling that $u \in L^{\infty}_{t}L^{2}_{x} \cap L^{2}_{t}\dot{H}^{1}_{x}$, 
$w \in C_{b}([0,\infty); L^{2}_{x}) \cap L^{2}_{t}\dot{H}^{1}_{x} \cap L^{r}_{t}L^{q}_{x}$, with $(r,q)$ admissible, and
$P_{u}, P_{w} \in L^{3/2}_{loc}((0,\infty) \times \Rtre)$. 
%the estimate for $P_{u}, P_{w}$ that one get by~\ref{RecoverP}.   
Notice that the function $t \mapsto \int_{\Rtre} u(t,x) \cdot w (t,x) \phi(t,x) dx$, that appears in the first line of the identity,
is continuous, at any $t \geq 0$, since $t \in [0,\infty) \to u(t)$ is $L^{2}$-weakly continuous and~$w$ belongs to $C_{b}([0,\infty); L^{2}_{x})$; see 
Section~\ref{sec:prelim} and
Remark~\ref{Rem:Uniqueness}. Notice also that the integrability of the term
$(u \! \cdot \! \nabla  ) u  \! \cdot \! w \phi$, which is the hardest to be handled, 
is guaranteed by $u \in L^{\bar{r}}_{t}L^{\bar{q}}_{x} \cap L^{2}_{t} \dot{H}^{1}_{x}$, $2/\bar{r} + 3/ \bar{q} = 3/2$, $2\leq \bar{q} \leq 6$
(see \eqref{UIP}) and $w \in L^{r}_{t}L^{q}_{x}$ with $(r,q)$ admissible, that is $2/\bar{r} + 3/ \bar{q} = 1$, $3 < q \leq \infty$.

Once we have proved \eqref{eq:StrongPertGenInProofOLD} for $t_{0}=0$, the inequality for almost any $t_{0 }>0$ can be 
deduced 
in the following 
way.
For any $\varepsilon > 0$, we consider the auxiliary test functions $\phi^{\varepsilon} (t,x) :=  \eta^{\varepsilon}(t) \phi(t,x)$
where $\eta^{\varepsilon}(t)$ is an $\varepsilon$-mollification of the step function with jump in $t_0$,
namely $\eta^{\varepsilon}(t) := \chi_{[t_{0}, \infty)} * \rho^{\varepsilon}$, 
%$\chi_{[t_0,\infty)}$ is the indicator function of $[t_0,\infty)$ 
and $\rho^{\varepsilon} := \varepsilon^{-1}\rho(\varepsilon^{-1}t)$, with~$0 \leq \rho \in C^{\infty}_{c} (\mathbb{R})$ and~$\int_{\mathbb{R}} \rho =1$.
We have 
%\begin{equation}\label{SCTestProp1}
%\partial_{t} \phi^{\varepsilon}(t,x)
%=  \rho^{\varepsilon}(t-t_0) \phi(t,x) +  \eta^{\varepsilon}\partial_{t}\phi(t,x)
%\end{equation}
$\partial_{t} \phi^{\varepsilon}(t,x)
=  \rho^{\varepsilon}(t-t_0) \phi(t,x) +  \eta^{\varepsilon}(t)\partial_{t}\phi(t,x)$,
and, as~$\varepsilon \to 0$: 
%\begin{equation}\label{SCTestProp2}
%\eta^{\varepsilon} \to \chi_{[t_0,\infty]},
%\quad
%\phi^{\varepsilon} \to \chi_{[t_0,\infty]} \phi,
%\quad
%\nabla \phi^{\varepsilon} \to \chi_{[t_0,\infty]} \nabla \phi,
%\quad
%\Delta \phi^{\varepsilon} \to \chi_{[t_0,\infty]} \Delta \phi.
%\end{equation}
$$ 
\eta^{\varepsilon} \to \chi_{[t_0,\infty]}, \ \
\phi^{\varepsilon} \to \chi_{[t_0,\infty]} \phi, \ \
\nabla \phi^{\varepsilon} \to \chi_{[t_0,\infty]} \nabla \phi, \ \
\Delta \phi^{\varepsilon} \to \chi_{[t_0,\infty]} \Delta \phi \, .
$$

We now apply the inequality (\ref{eq:StrongPertGenInProofOLD}), 
proved in the case $t_{0}=0$, with the test function $\phi^{\varepsilon}$. Taking 
the limit $\varepsilon \rightarrow 0$, 
%In order to do this we recall that
%$P_{v} \in L^{3/2}_{loc}((0,\infty) \times \Rtre)$ (since the same is true for $P_{u}, P_{w}$) and  
%$v \in L^{\infty}_{t}L^{2}_{x} \cap L^{2}_{t}\dot{H}^{1}_{x} \cap L^{\bar{r}}_{t, loc}L^{\bar{q}}_{x, loc}$  
%provided $2/\bar{r} + 3/ \bar{q} \geq 3/2$, $\bar{q} \leq 6$ (since the same is true for $u,w$).
%Recalling also that $w \in L^{r}_{t}L^{q}_{x}$ with $(r,q)$ admissible, it is easy to check\footnote{Actually, all the estimates we require 
%have been already used in the previous part of the proof.} that we are allowed to apply the dominated convergence theorem in this
%perturbed energy 
%inequality, so that by (\ref{SCTestProp1}, \ref{SCTestProp2})
we get 
\begin{eqnarray}
     \textstyle
     \int_{\Rtre} |v|^{2} \phi (t) 
&+& 
  \textstyle
  2 \int_{t_0}^{t}\int_{\Rtre} | \nabla v |^{2} \phi
    \leq  
   \liminf_{\varepsilon \rightarrow 0}  \int_{0}^{t}    \rho^{\varepsilon}(s-t_0)   \left(\int_{\Rtre} |v|^{2}\phi\right)(s)  ds
    \\ \nonumber
& + & 
\textstyle
\int_{t_0}^{t} \int_{\Rtre} |v|^{2} 
    (\phi_{t} + \Delta \phi) 
    +\int_{t_0}^{t}\int_{\Rtre} (|v|^{2} + 2P_{v})v \cdot \nabla \phi
 \\ \nonumber
 & + &
 \textstyle
 \int_{t_0}^{t}\int_{\mathbb{R}^{3}} 
  |v|^{2} (w \cdot \nabla \phi) 
  +
   2 \int_{t_0}^{t}\int_{\mathbb{R}^{3}} 
  (v \cdot w) (v \cdot \nabla \phi)  
  + \phi (v \cdot \nabla ) v \cdot w.
\end{eqnarray}
Here we have used the integrability properties of $v, P_{v}, w$ that we have recalled above.
Thus, the required inequality is satisfied for any $t_{0}$ such that
\begin{equation}\nonumber
\lim_{\varepsilon \rightarrow 0}  \int_{0}^{t}    \rho^{\varepsilon}(s-t_0)   \left(\int_{\Rtre} |v|^{2}\phi\right)(s)  ds
= 
\int_{\Rtre} |v|^{2} \phi(t_0),
\end{equation}
that is actually true for all the Lebesgue points 
of the function $t \rightarrow \int_{\Rtre} |v|^{2} \phi (t)$, and so for almost every $t_{0} >0$, since $v \in L^{\infty}_{t}L^{2}_{x}$. This concludes 
the proof.

\end{proof}

\begin{lemma}\label{LasLem}
Let $w \in L^{2}_{t}L^{\infty}_{x}$ be a generalized reference solution to the Navier--Stokes 
equation (see Definition~\ref{Def:GenRefSol}) with divergence free data $w_{0} \in L^{2}_{loc}$.
If
$v$ is a suitable weak solution to the perturbed equation~\eqref{ModProbInPrelimin}, around $w$, 
with divergence free data $v_{0} \in L^{2}(\Rtre)$, then $u := v + w$ is a suitable weak solution to the Navier--Stokes equation
with data~$u_{0} := v_{0} + w_{0}$ and pressure $P_{u} := P_{v} + P_{w}$.
\end{lemma}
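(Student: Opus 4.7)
The plan is to verify in turn the four defining properties (1)--(4) of suitable weak solutions (Section~\ref{sec:prelim}) for the pair $(u, P_u) := (v+w,\, P_v+P_w)$ with initial datum $u_0 := v_0 + w_0$. Properties (1)--(3) are essentially routine. Summing the distributional equation $\partial_t w + (w\cdot\nabla)w - \Delta w = -\nabla P_w$ (available since $w$ is suitable, being a generalized reference solution) with the perturbed equation \eqref{ModProbInPrelimin} for $v$, and using the algebraic identity $((v+w)\cdot\nabla)(v+w) = (v\cdot\nabla)v + (v\cdot\nabla)w + (w\cdot\nabla)v + (w\cdot\nabla)w$, yields $\partial_t u + (u\cdot\nabla)u - \Delta u = -\nabla P_u$ in $\mathcal{D}'$; incompressibility is linear. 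The convergence $u(t) \to u_0$ weakly in $L^2_{loc}$ follows from $v(t) \rightharpoonup v_0$ in $L^2$ (part of the hypothesis on $v$) together with the strong convergence $w(t) \to w_0$ in $L^2(K)$ on any compact $K$, which is part of $w \in C_b([0,\infty); L^2(K))$ required by Definition~\ref{Def:GenRefSol}. The local bounds $\esssup_t \int_K |u|^2 < \infty$ and $\iint_{(0,\infty)\times K}|\nabla u|^2 < \infty$ are inherited from the global bounds $v \in L^\infty_t L^2_x \cap L^2_t \dot H^1_x$ and from the corresponding local bounds for the suitable solution $w$, and $P_u \in L^{3/2}_{loc}$ is immediate.

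The nontrivial point is the local energy inequality \eqref{GenEnIneq} for $u$. The plan obtains it by summing three ingredients: (a) the local energy inequality \eqref{GenEnIneq} for $w$, available since $w$ is suitable; (b) the perturbed local energy inequality \eqref{eq:StrongPertGenInProofOLD} for $v$ with $t_0=0$, which is the standing hypothesis; and (c) a \emph{mixed identity} of the form
\begin{equation*}
\begin{split}
\Big[2\!\int_{\Rtre} v\cdot w\,\phi\Big]_{0}^{t} + 4\!\int_{0}^{t}\!\!\int_{\Rtre}\phi\,\nabla v:\nabla w
&= 2\!\int_{0}^{t}\!\!\int_{\Rtre}(v\cdot w)(\phi_t+\Delta\phi) \\
&\quad + 2\!\int_0^t\!\!\int_{\Rtre}(P_v\,w + P_w\,v)\cdot\nabla\phi - \mathcal{M},
\end{split}
\end{equation*}
where $\mathcal{M}$ collects the four cubic cross contributions $\int_0^t\!\!\int 2\phi\,[w\!\cdot\!((v\!\cdot\!\nabla)v) + w\!\cdot\!((v\!\cdot\!\nabla)w) + w\!\cdot\!((w\!\cdot\!\nabla)v) + v\!\cdot\!((w\!\cdot\!\nabla)w)]$. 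A direct integration by parts (using $\nabla\cdot v = \nabla\cdot w = 0$) reduces the last three of these four terms to boundary contributions and yields $\mathcal{M} = \int 2\phi(v\cdot\nabla)v\cdot w - \int|w|^2 v\cdot\nabla\phi - 2\int(v\cdot w)w\cdot\nabla\phi$. Combined with the $w$-dependent surplus terms $\int|v|^2 w\cdot\nabla\phi + 2\int(v\cdot w)v\cdot\nabla\phi + 2\int\phi(v\cdot\nabla)v\cdot w$ appearing in \eqref{eq:StrongPertGenInProofOLD}, together with the expansions $|u|^2 = |v|^2 + 2v\cdot w + |w|^2$, $|\nabla u|^2 = |\nabla v|^2 + 2\nabla v:\nabla w + |\nabla w|^2$, and $P_u\,u = P_v v + P_v w + P_w v + P_w w$, the sum of (a), (b), (c) recombines precisely into the right-hand side of \eqref{GenEnIneq} for $u$.

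The main obstacle is the rigorous justification of the mixed identity (c), since the natural test functions $2w\phi$ (for the equation of $v$) and $2v\phi$ (for the equation of $w$) are not admissible as they stand: $v$ and $w$ are only weakly regular. The plan is to proceed exactly as in the proof of Lemma~\ref{Lemma:PertGenEnIneq}: replace $v$ and $w$ by their space-time mollifications $v^\varepsilon, w^\varepsilon$ obtained by convolving with a standard kernel $\rho^\varepsilon \in C_c^\infty(\mathbb{R}^4)$, use $2 w^\varepsilon\phi$ and $2 v^\varepsilon\phi$ as admissible test vectors in the weak formulations of the two equations, integrate by parts, add, and pass to the limit $\varepsilon \to 0$. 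All the limits are controlled by the integrability package $v \in L^\infty_t L^2_x \cap L^2_t \dot H^1_x$ (hence $v$ lies in all admissible mixed Lebesgue spaces of \eqref{UIP}), the hypothesis $w \in L^2_t L^\infty_x$, the local bounds $w \in L^\infty_t L^2(K) \cap L^2_t \dot H^1(K)$ (from $w$ being a generalized reference solution, hence suitable), and $P_v, P_w \in L^{3/2}_{loc}$; the most delicate term is $\int_0^t\!\!\int(v\cdot\nabla)v\cdot w\,\phi$, for which H\"older in the scheme $L^\infty_t L^2_x \cdot L^2_t L^2_x \cdot L^2_t L^\infty_x$ closes. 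The boundary value at $t=0$ is identified as $2\!\int v_0\cdot w_0\,\phi(0)$ by combining the $L^2$-weak continuity of $v$ with the strong continuity $w \in C_b([0,\infty); L^2(K))$. Note that (c) is an \emph{equality} (not an inequality), since it derives solely from the linear weak formulations tested against auxiliary smooth fields and involves no dissipation term; consequently the $\le$ signs in the final inequality come entirely from (a) and (b).
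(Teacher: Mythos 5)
Your proposal is correct and takes essentially the same route as the paper: the paper proves Lemma~\ref{LasLem} by observing that it is the argument of Lemma~\ref{Lemma:PertGenEnIneq} run in reverse (summing the two local energy inequalities together with the mixed identity for $\int v\cdot w\,\phi$ obtained by cross-testing the two momentum equations via mollification, with the cubic cross terms recombining exactly as you compute). Your write-up simply carries out in detail what the paper leaves as "analogous", and the algebraic bookkeeping of the cross terms checks out.
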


Recalling that $(2,\infty)$ is an admissible couple, the proof is analogous to that of Lemma~\ref{Lemma:PertGenEnIneq}.
Here $v$, which plays the role played by $u$ in the previous lemma, enjoys the same integrability property, 
while~$w$ belongs to $L^{\infty}_{t}L^{2}(K) \cap L^{2}_{t}\dot{H}^{1}(K) \cap L^{2}_{t}L^{\infty}_{x}$,
$K \subset \Rtre$ compact, and 
$t \in [0,\infty) \to w(t) \in L^{2}(K)$, $K \subset \Rtre$ compact, is 
continuous. However, since in the proof all the vector fields are multiplied 
against a compactly supported test function, this does not affect the argument.

\begin{proposition}[Caffarelli--Kohn--Nirenberg 
  \cite{CaffarelliKohnNirenberg84-a}]
  Assume that 
  \begin{enumerate}
    \item $r > 0$, $0 < \theta \leq 1$, 
    $\gamma < 3/r$, $\alpha < 3/2$, $\beta < 3/2$;
    \item $-\gamma + 3/r = \theta (-\alpha + 1/2) + (1-\theta)(-\beta + 3/2)$;
    \item $ \theta \alpha + (1-\theta)\beta \leq \gamma   $;
    \item when $- \gamma + 3/r = -\alpha + 1/2$,
    assume also $\gamma \leq \theta (\alpha +1) + (1-\theta)\beta$.
  \end{enumerate}
  Then
  \begin{equation}\label{CKNInequalityCKNVersion}
    \| \sigma_{\mu}^{\gamma} f \|_{L^{r}(\mathbb{R}^{3})} \leq C
    \| \sigma_{\mu}^{\alpha} \nabla f \|^{\theta}_{L^{2}(\mathbb{R}^{3})} 
    \| \sigma_{\mu}^{\beta} f \|^{1-\theta}_{L^{2}(\mathbb{R}^{3})}, 
  \end{equation}
  where $\sigma_{\mu} := (\mu + |x|^{2})^{-1/2}$, 
  $\mu \geq 0$. The constant $C$ is independent of $\mu$.
\end{proposition}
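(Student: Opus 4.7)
The plan is to reduce \eqref{CKNInequalityCKNVersion} to the classical weighted Caffarelli--Kohn--Nirenberg interpolation inequality (the case $\mu = 0$, with $\sigma_0(x) = |x|^{-1}$), proved in \cite{CaffarelliKohnNirenberg84-a} by a combination of H\"older's inequality, dyadic decomposition into annuli $\{|x|\sim 2^{k}\}$, and the unweighted Gagliardo--Nirenberg--Sobolev interpolation on each annulus. The hypotheses (1)--(4) are precisely the conditions appearing in their Theorem~1, specialized to ambient dimension $n=3$ and to the $L^{2}$ exponent on the two norms of the right-hand side; so for $\mu = 0$ nothing needs to be done beyond invoking their statement.

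The second step is to scale the parameter $\mu$ away. Setting $x = \sqrt{\mu}\,y$ and $g(y) := f(\sqrt{\mu}\,y)$, one computes $\sigma_{\mu}(x) = \mu^{-1/2}\sigma_{1}(y)$ and $\nabla_{x}f = \mu^{-1/2}\nabla_{y}g$, so that every weighted norm in \eqref{CKNInequalityCKNVersion} picks up an explicit power of $\mu$. The dimensional identity~(2) is exactly the relation that makes those powers cancel on the two sides, reducing the general case to $\mu = 1$ with a constant independent of $\mu$.

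For $\mu = 1$, the weight $\sigma_{1}(x) = (1+|x|^{2})^{-1/2}$ is bounded above by $1$ everywhere and comparable to $|x|^{-1}$ on $\{|x|\geq 1\}$. I would then split $f = \chi f + (1-\chi)f$, where $\chi \in C_{c}^{\infty}(\mathbb{R}^{3})$ equals $1$ on $B_{1}(0)$ and is supported in $B_{2}(0)$. On the compactly supported piece $\chi f$ the weights are uniformly bounded above and below, so \eqref{CKNInequalityCKNVersion} follows from the standard Gagliardo--Nirenberg--Sobolev inequality on $B_{2}(0)$, with the commutator contribution $f\nabla\chi$ absorbed via the Poincar\'e inequality. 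On the far-field piece $(1-\chi)f$ the weight $\sigma_{1}$ is comparable to $\sigma_{0} = |x|^{-1}$, so the $\mu = 0$ case applies directly and yields the desired bound.

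The main obstacle is the parameter bookkeeping at the endpoint of condition~(4), where $-\gamma + 3/r = -\alpha + 1/2$: this is the scaling line of the weighted Hardy--Sobolev inequality, and the extra constraint $\gamma \leq \theta(\alpha+1) + (1-\theta)\beta$ is precisely what allows one to invoke Hardy's inequality $\||x|^{-1}g\|_{L^{2}} \lesssim \|\nabla g\|_{L^{2}}$ in an interpolated form without losing on the weight exponents. Once this borderline case is checked, the assembly of \eqref{CKNInequalityCKNVersion} from the two pieces is routine H\"older and Young.
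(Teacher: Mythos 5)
Your plan follows essentially the same route as the paper's proof: rescale to $\mu=1$, split $f$ with a smooth cutoff at the unit ball, treat the far-field piece by the comparison $\sigma_{1}\simeq|x|^{-1}$ there together with the classical $\mu=0$ inequality, and treat the compactly supported piece by unweighted Gagliardo--Nirenberg. The only cosmetic differences are that the paper controls the commutator term $f\nabla\chi$ via the embedding $\dot{H}^{1}\hookrightarrow L^{6}$ rather than a Poincar\'e inequality (which for general $f$, without a mean-zero normalization, would not apply directly to bound $\|f\|_{L^{2}}$ on the annulus), and that it makes explicit the passage to a larger exponent $s\geq r$ needed so that the unweighted parameters $(\theta,s,0,0,0)$ satisfy the admissibility conditions on the compactly supported piece.
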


The (\ref{CKNInequalityCKNVersion}) has been proved in \cite{CaffarelliKohnNirenberg84-a}
in the case $\mu = 0$. %(see \cite{DL}, \cite{DenapoliDrelichmanDuran10-a} for extensions). 
The general case can be obtained by the following standard argument.
First notice that, by rescaling, it suffices to prove (\ref{CKNInequalityCKNVersion}) in the case $\mu =1$. Then we notice that
\begin{equation}\nonumber
\sigma_{1} \simeq 1 \quad \text{if} \quad |x| \lesssim 1,
\qquad
\sigma_{1} \simeq |x|^{-1} \quad \text{if} \quad |x| \gtrsim 1 .
\end{equation}
Split $f = f_{1} + f_{2} := \chi_1 f + \chi_2  f $, where
$\chi_1$ is a smooth cut-off function of the unit ball, namely 
$0 \leq \chi_1 \leq 1$, $\chi_1=1$ if $|x|\leq 1$, $\chi_1 = 0$ if $|x| \geq 2$
($\chi_2 := 1-\chi_1$). 
Using the inequality \eqref{CKNInequalityCKNVersion} with $\mu =0$ we have
\begin{equation}\label{CKNInequalityCKNVersionFar0}
 \| \sigma_{1}^{\gamma} f_{2} \|_{L^{r}(\mathbb{R}^{3})} 
 %\lesssim
 %\| \sigma_{1}^{\gamma} f_{2} \|_{L^{r'}(\mathbb{R}^{3})} 
 \lesssim
    \| \sigma_{1}^{\alpha} \nabla f_{2} \|^{\theta}_{L^{2}(\mathbb{R}^{3})} 
    \| \sigma_{1}^{\beta} f_{2} \|^{1-\theta}_{L^{2}(\mathbb{R}^{3})} \, ,
\end{equation}
where we have used that $\sigma_{1} \simeq |x|^{-1}$ in the support of $f_{2}$.

If the parameters $(\theta,r,\alpha,\beta,\gamma)$ satisfies the conditions (1--4), 
then 
 $(\theta,r,0,0,0)$
satisfies the same conditions once we replace (2) with 
$$
-\gamma + 3/r \geq \theta (-\alpha + 1/2) + (1-\theta)(-\beta + 3/2) \, .
$$
Thus, there exists $s \geq r$ so that $(\theta,s,0,0,0)$ satisfies (1--4), and we can 
use again the inequality~\eqref{CKNInequalityCKNVersion}
with $\mu =0$, so that
\begin{equation}
    \|  f_{1} \|_{L^{r}(\mathbb{R}^{3})} 
    \lesssim
    \|  f_{1} \|_{L^{s}(\mathbb{R}^{3})} 
    \lesssim
    \|  \nabla f_{1} \|^{\theta}_{L^{2}(\mathbb{R}^{3})} 
    \| f_{1} \|^{1-\theta}_{L^{2}(\mathbb{R}^{3})} \, ,
  \end{equation}
  recalling that $f_{1}$ is compactly supported.
This is actually the Gagliardo--Nirenberg inequality. 
Since $\sigma_{1} \simeq 1$ in the support of $f_{1}$ 
this implies
\begin{equation}\label{CKNInequalityCKNVersionClose0}
    \| \sigma_{1}^{\gamma} f_{1} \|_{L^{r}(\mathbb{R}^{3})} 
    \lesssim
    \| \sigma_{1}^{\alpha} \nabla f_{1} \|^{\theta}_{L^{2}(\mathbb{R}^{3})} 
    \| \sigma_{1}^{\beta} f_{1} \|^{1-\theta}_{L^{2}(\mathbb{R}^{3})} \, .
  \end{equation} 
We finally observe that, for $j=1,2$,
\begin{align}\label{EM}
\| \sigma_{1}^{\alpha} \nabla f_{j}\|_{L^{2}(\Rtre)} 
& 
\leq 
\| \sigma_{1}^{\alpha}  f \nabla \chi_j\|_{L^{2}(1 \leq |x| \leq 2)} +
\| \sigma_{1}^{\alpha} \chi_j \nabla f  \|_{L^{2}(\Rtre)}
\\ \nonumber
& \lesssim
\| \sigma_{1}^{\alpha}  f \|_{L^{6}(1 \leq |x| \leq 2)} +
\| \sigma_{1}^{\alpha} \nabla f  \|_{L^{2}(\Rtre)}
\lesssim
\| \sigma_{1}^{\alpha} \nabla f\|_{L^{2}(\Rtre)},
\end{align}
where in the last inequality we have used the embedding $\dot{H}^{1} \hookrightarrow L^{6}$.
%and the fact that $\sigma_{1}^{\alpha} \simeq 1$ if $1 \leq |x| \leq2$.
Then \eqref{CKNInequalityCKNVersion} follows by 
(\ref{CKNInequalityCKNVersionFar0}), 
(\ref{CKNInequalityCKNVersionClose0}), (\ref{EM}). 
%and elementary algebraic manipulations.

The family 
of inequalities~\eqref{SteinIneq} has been proved in \cite{Stein} in the case~$\mu =0$. The general case 
can be then deduced as shown in~\cite[Lemma 7.2]{CKN}.

\begin{proposition}[Stein]
Let $1 < p < \infty$ and 
$ - 3 + 3/p < \alpha < 3/p $. Let $T$ be a singular operator. Then 
\begin{equation}\label{SteinIneq}
  \|\sigma_{\mu}^{\alpha} T f \|_{L^{p}} 
  \leq C \|\sigma_{\mu}^{\alpha} f \|_{L^{p}}, 
  \end{equation}
where $\sigma_{\mu} := (\mu + |x|^{2})^{-1/2}$, 
  $\mu \geq 0$. The constant $C$ is independent of $\mu$.
\end{proposition}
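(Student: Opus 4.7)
The plan is to reduce the general $\mu \geq 0$ case to the classical homogeneous case $\mu = 0$ proved by Stein, which yields
\begin{equation*}
\||x|^{\alpha} T f\|_{L^p(\mathbb{R}^3)} \leq C \||x|^{\alpha} f\|_{L^p(\mathbb{R}^3)}
\end{equation*}
under the exact exponent range $1<p<\infty$, $-3+3/p < \alpha < 3/p$ (equivalently, the Muckenhoupt condition $|x|^{\alpha p} \in A_p$). This case is used as a black box; the task is to transfer it to the regularized weight $\sigma_\mu^\alpha$.

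First I would exploit the scaling symmetry of the problem. A singular integral operator $T$ of convolution type has kernel homogeneous of degree $-3$, so it commutes with dilations: writing $f_\lambda(x) := f(\lambda x)$ one has $T(f_\lambda) = (Tf)_\lambda$. Together with the identity $\sigma_\mu(\sqrt{\mu}\, y) = \mu^{-1/2} \sigma_1(y)$, the change of variable $x = \sqrt{\mu}\, y$ reduces the inequality for $(\mu, f)$ to the inequality for $(1, f_{\sqrt{\mu}})$ with exactly the same constant. Hence it suffices to prove
\begin{equation*}
\|\sigma_1^{\alpha} T g\|_{L^p(\mathbb{R}^3)} \leq C \|\sigma_1^{\alpha} g\|_{L^p(\mathbb{R}^3)}.
\end{equation*}

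Next I would treat $\mu = 1$ by comparing $\sigma_1$ with the homogeneous weight $|x|^{-1}$. Decompose $g = g_1 + g_2$ with $g_1 := g\,\mathbf{1}_{|x|\leq 2}$ and $g_2 := g\,\mathbf{1}_{|x|>2}$, and split the target region into $\{|x|\leq 2\}$ and $\{|x|>2\}$ as well. On the far region $\{|x|>2\}$ one has $\sigma_1(x) \simeq |x|^{-1}$ uniformly, so the contribution of $Tg_2$ on this region is controlled directly by Stein's $\mu=0$ result applied to $g_2$ with weight $|x|^\alpha$. On the near region $\{|x|\leq 2\}$ the weight satisfies $\sigma_1 \simeq 1$, and the contribution of $Tg_1$ there reduces to unweighted $L^p$-boundedness of $T$, which is standard Calder\'on--Zygmund theory. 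The four remaining cross terms (near-output of a far input, or vice versa) benefit from kernel decay: for $|x|\leq 2$ and $|y|>2$ one has $|K(x-y)|\lesssim |y|^{-3}$ (and symmetrically), so these pieces are dominated pointwise, and H\"older together with the exponent range $-3+3/p<\alpha<3/p$ makes the integrals converge with a $\mu$-independent constant.

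The main obstacle will be bookkeeping two kinds of uniformity at once: first, ensuring that after undoing the scaling change of variables the constant truly does not depend on $\mu$ (since in the weighted norm the factor $\mu^{\alpha p/2}$ must cancel exactly between the two sides of the inequality, which it does because the exponents of $\sigma_\mu^\alpha$ match on the left and on the right); second, checking that the hypothesis $-3+3/p<\alpha<3/p$ is precisely what makes all four cross-term integrals finite and is also precisely the $A_p$ range needed to apply Stein's theorem to the far-far piece. Once these two checks go through, the estimate is complete; alternatively, the argument is exactly the content of \cite[Lemma 7.2]{CKN}, which can be cited verbatim.
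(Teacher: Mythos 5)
Your overall strategy --- quote Stein's homogeneous case $\mu=0$ as a black box, rescale $x=\sqrt{\mu}\,y$ to reduce to $\mu=1$, and then compare $\sigma_{1}$ with $1$ near the origin and with $|x|^{-1}$ far away via a near/far decomposition --- is exactly the intended one: the paper gives no proof beyond citing \cite{Stein} for $\mu=0$ and \cite[Lemma 7.2]{CKN} for the transfer to general $\mu$, and your sketch is a reconstruction of that transfer argument. The scaling step is correct (the prefactor $\mu^{3/2-\alpha p/2}$ appears identically on both sides and cancels), and the diagonal pieces (near--near by unweighted Calder\'on--Zygmund theory, far--far by the homogeneous Stein estimate plus $\sigma_{1}^{\alpha}\simeq|x|^{-\alpha}$ on $\{|x|>2\}$) are fine.

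There is, however, one step that fails as written: the cross-term bound. You truncate the input and restrict the output at the \emph{same} radius $2$ and then claim $|K(x-y)|\lesssim|y|^{-3}$ for $|x|\le 2$, $|y|>2$. This is false near the interface: for $x$ and $y$ both close to the sphere $\{|x|=2\}$ one has $|x-y|\to 0$ while $|y|^{-3}\simeq 1$, so the kernel is not dominated there and the "pointwise domination plus H\"older" argument breaks down. The standard repair is to insert a buffer: split $g_{2}=g\,\mathbf{1}_{2<|y|\le 4}+g\,\mathbf{1}_{|y|>4}$ (and symmetrically for the output regions), handle the annulus piece by unweighted $L^{p}$ boundedness since $\sigma_{1}^{\alpha}\simeq 1$ on $\{|y|\le 4\}$, and apply the kernel decay only when $|x|\le 2$ and $|y|>4$ (so that $|x-y|\ge|y|/2$), and likewise for the reversed cross term; your exponent counting ($\alpha<3/p$ for one tail, $\alpha>-3+3/p$ for the other) then goes through unchanged. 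A second, smaller point: your statement of the $\mu=0$ black box has the wrong sign of $\alpha$ --- since $\sigma_{0}^{\alpha}=|x|^{-\alpha}$, the estimate you need is $\||x|^{-\alpha}Tf\|_{L^{p}}\le C\||x|^{-\alpha}f\|_{L^{p}}$, whose Muckenhoupt range $|x|^{-\alpha p}\in A_{p}$ is precisely $-3+3/p<\alpha<3/p$; the range you quote is not equivalent to $|x|^{\alpha p}\in A_{p}$ except at $p=2$. Both issues are routine to fix and do not affect the viability of the approach.
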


\section{Acknowledgments}

The authors would like to thank Jean-Yves Chemin for suggesting to use the solutions of~\cite{ChemGall}
as examples of reference solutions. %proving in this way Proposition \ref{prop:CGPert}.
Renato Luc\`{a} is supported by the ERC Starting Grant 676675 FLIRT.


\begin{thebibliography}{10}



%\bibitem{AL1}
%\newblock
%J.-P. Aubin. 
%\newblock Un th\'eor\`eme de compacit\'e. 
%\newblock {\em C. R. Acad. Sci. Paris}, 256 (1963), 5042--5044.




\bibitem{Arnold}
\newblock V. I Arnold, B. A. Khesin. 
\newblock Topological methods in hydrodynamics. 
\newblock {\em Applied Mathematical Sciences.}, 125 (1998), Springer-Verlag, New York.

\bibitem{Auch}
\newblock P. Auscher, S. Dubois, P. Tchamitchian. 
\newblock On the stability of global solutions to Navier--Stokes equations in the space. 
\newblock {\em J. Math. Pures Appl.}, (9) 83 (2004), no. 6, 673--697.


\bibitem{CLuc}
F. Cacciafesta, R. Luc\`a.
\newblock Singular integrals with angular integrability.
\newblock {\em Proc. Amer. Math. Soc.}, 144 (2016), no. 8, 3413--3418.


\bibitem{CaffarelliKohnNirenberg84-a}
L. Caffarelli, R. Kohn and L. Nirenberg.
\newblock First order interpolation inequalities with weights.
\newblock {\em Compositio Math.}, 53 (1984), no. 3, 259--275.




\bibitem{CKN} 
L. A. Caffarelli, R. Kohn and L. Nirenberg. 
\newblock Partial regularity of suitable weak solutions of the Navier--Stokes equations. 
\newblock {\em Comm. Pure Appl. Math.}, 35 (1982), no. 6, 771--831.










\bibitem{Cannon} 
M. Cannone.
\newblock A generalization of a theorem by Kato on Navier--Stokes equations. 
\newblock {\em Rev. Mat. Iberoamericana}, 13 (1997), no. 3, 515--541.



\bibitem{CannoneHand}
M. Cannone. 
\newblock Harmonic analysis tools for solving the incompressible Navier--Stokes equations. 
\newblock {\em Handbook of mathematical fluid dynamics.}, Vol. III, 161--244, North-Holland, Amsterdam, 2004.

\bibitem{Chae}
D. Chae, J. Lee. 
\newblock On the regularity of the axisymmetric solutions of the Navier--Stokes equations. 
\newblock {\em Math. Z.}, 239 (2002), no. 4, 645--671.


%\bibitem{Chemin}
% J.-Y. Chemin.
%\newblock Remarques sur l'existence globale pour le syst\`{e}me de Navier-Stokes incompressible. (French) [Remarks on global existence for the incompressible Navier-%Stokes system].
%\newblock {\em SIAM J. Math. Anal.}, 23 (1992), no. 1, 20--28.



\bibitem{ChemYvGall}
 J.-Y. Chemin, I. Gallagher. 
\newblock On the global wellposedness of the 3-D Navier-Stokes equations with large initial data.  
\newblock{\em Ann. Sci. \'Ecole Norm. Sup.}, (4) 39 (2006), no. 4, 679--698.


\bibitem{ChemGall}
 J.-Y. Chemin, I. Gallagher. 
\newblock Wellposedness and stability results for the Navier-Stokes equations in $\mathbb{R}^{3}$. 
\newblock{\em Ann. Inst. H. Poincar\'e Anal. Non Lin\'eaire}, 26 (2009), no. 2, 599--624.











\bibitem{DL} 
P. D'Ancona and R. Luc\`a.
\newblock Stein--Weiss and Caffarelli--Kohn--Nirenberg inequalities with angular integrability.
\newblock {\em J. Math. Anal. Appl.}, 388 (2012), no. 2, 1061--1079.




\bibitem{DL2} 
P. D'Ancona and R. Luc\`a.
\newblock On the regularity set and angular integrability for the Navier--Stokes equation.
 \newblock {\em Arch. Ration. Mech. Anal.} 221 (2016), no. 3, 1255--1284.









\bibitem{DenapoliDrelichmanDuran10-a}
P.L. De N{\'a}poli, I. Drelichman and R. G. Dur{\'a}n.
\newblock Improved Caffarelli--Kohn--Nirenberg and trace inequalities for radial functions. 
\newblock {\em Commun. Pure Appl. Anal.}, 11 (2012), no. 5, 1629--1642.  


\bibitem{AlbDanRen}
A. Enciso, R. Luc\`a, D. Peralta-Salas. 
\newblock Vortex reconnection in the three dimensional Navier-Stokes equations. 
\newblock {\em Adv. Math.}, 309 (2017), 452--486. % (2017), in press, http://dx.doi.org/10.1016/j.aim.2017.01.025, (arXiv:1606.06176).  

\bibitem{AlbDan}
A. Enciso, D. Peralta-Salas. 
\newblock Knots and links in steady solutions of the Euler equation. 
\newblock{\em Ann. of Math.}, (2) 175 (2012), no. 1, 345--367.


\bibitem{AlbDan2}
A. Enciso, D. Peralta-Salas. 
\newblock Existence of knotted vortex tubes in steady Euler flows. 
\newblock{\em Acta Math.}, 214 (2015), no. 1, 61--134



\bibitem{AlbDanTorr}
A. Enciso, D. Peralta-Salas, F. Torres de Lizaur. 
\newblock Knotted structures in high-energy Beltrami fields on the torus and the sphere.
\newblock {\em Ann. Sci. Ec. Norm. Sup.}, to appear (arXiv:1505.01605).



\bibitem{Fabes}
E. Fabes, B. Jones and N. Riviere
\newblock The initial value problem for the Navier--Stokes equation with data in $L^{p}$.
\newblock {\em Arch. Ration. Mech. Anal.}, 45 (1972), 222--240.


\bibitem{Fed}
P. Federbush.
\newblock Navier and Stokes meet the wavelet.
\newblock {\em Comm. Math. Phys.}, 155 (1993), no. 2, 219--248.



\bibitem{Foias}
C. Foias, J.-C. Saut.
\newblock Asymptotic behavior, as $t \rightarrow +\infty $, of solutions of Navier-Stokes equations and nonlinear spectral manifolds. 
\newblock {\em Indiana Univ. Math. J.}, 33 (1984), no. 3, 459--477.


\bibitem{FKato}
H. Fujita and T. Kato. 
\newblock On the Navier--Stokes initial value problem I. 
\newblock {\em Arch. Rational Mech. Anal.}, 16 (1964) 269--315.

\bibitem{Gall2}
I. Gallagher. 
\newblock The tridimensional Navier--Stokes equations with almost bidimensional data: stability, uniqueness and life span
\newblock {\em Internat. Math. Res. Notices}, (1997), no. 18, 919--935.

\bibitem{Gall1}
 I. Gallagher.
\newblock Stability and weak-strong uniqueness for axisymmetric solutions of the Navier--Stokes equations
\newblock {\em Differential Integral Equations}, 16 (2003), no. 5, 557--572.




\bibitem{GIM}
 I. Gallagher, S. Ibrahim and M. Majdoub.
\newblock Existence et unicit\'{e} de solutions pour le syst\`{e}me de Navier-Stokes axisym\`{e}trique. (French) [Existence and uniqueness of solutions for an axisymmetric Navier-Stokes system].
\newblock {\em Comm. Partial Differential Equations}, 26 (2001), no. 5-6, 883--907.

 



\bibitem{GIP2} 
I. Gallagher, D. Iftimie and F. Planchon.
\newblock Non-explosion en temps grand et stabilit\'{e} de solutions globales des \'{e}quations de Navier-Stokes.
\newblock {\em C. R. Math. Acad. Sci. Paris}, 334 (2002), no. 4, 289--292.




  

\bibitem{GIP1} 
I. Gallagher, D. Iftimie and F. Planchon.
\newblock Asymptotics and stability for global solutions to the Navier--Stokes
equations.
\newblock {\em Ann. Inst. Fourier}, 53 (2003), no. 5, 1387--1424.










\bibitem{Giga} 
Y. Giga. 
\newblock Solutions for semilinear parabolic equations in $L^{p}$ and regularity of weak solutions of the Navier–Stokes system. 
\newblock {\em J. Diff. Eq.}, 62 (1986), no. 2, 186--212.


\bibitem{Giga2} 
Y. Giga and T. Miyakawa.  
\newblock Navier--Stokes flow in $\mathbb{R}^{3}$ with mesures
as initial vorticity and Morrey Spaces. 
\newblock {\em Comm. Partial Differential Equations}, 14 (1989), no. 5, 577--618.




 
\bibitem{Graf} 
L. Grafakos.
\newblock Modern Fourier analysis. Second edition. 
\newblock {\em Graduate Texts in Mathematics}, 250 (2009), Springer, New York.


\bibitem{Iftim}
D. Iftimie. 
\newblock The 3D Navier--Stokes equations seen as a perturbation of the 2D Navier--Stokes equations. 
\newblock {\em Bull. Soc. Math. France}, 127 (1999), no. 4, 473--517.

\bibitem{Kato}
T. Kato. 
\newblock Strong $L^{p}$-solutions of the Navier--Stokes equation in $\mathbb{R}^{m}$, with applications to weak solutions. 
\newblock {\em Math. Z.}, 187 (1984), no. 4, 471--480.






\bibitem{Kato4}
T. Kato.
\newblock Strong solutions of the Navier-Stokes equation in Morrey spaces.
\newblock {\em Bol. Soc. Brasil. Mat. (N.S.)}, 22 (1992), no. 2, 127--155.



\bibitem{Kaw}
T. Kawanago.
\newblock Stability estimate for strong solutions of the Navier--Stokes system and its applications.
\newblock{\em Electron. J. Differential Equations}, (1998), no. 15, 23 pp. (electronic).








\bibitem{Tat} 
H. Koch and D. Tataru.
\newblock Well-posedness for the Navier--Stokes equations. 
\newblock {\em Adv. Math.}, 157 (2001), no. 1, 22--35.




\bibitem{Kozono}
H. Kozono, M. Yamazaki.
\newblock Semilinear heat equations and the Navier-Stokes equation with distributions in new function spaces as initial data.
\newblock {\em Comm. Partial Differential Equations}, 19 (1994), no. 5-6, 959--1014.





\bibitem{Lady}
O. A. Ladyzenskaya.
\newblock Unique global solvability of the three-dimensional Cauchy problem for the Navier--Stokes equations in the presence of axial symmetry.
\newblock{ \em (Russian) Zap. Nau\v{c}n. Sem. Leningrad. Otdel. Mat. Inst. Steklov. (LOMI)}, 7 (1968), 155--177.


%\bibitem{Lady2}
%\newblock O. A. Ladyzhenskaya. 
%\newblock The mathematical theory of viscous incompressible flow. Second English edition, revised and enlarged. Translated from the Russian by Richard A. Silverman %and John Chu. 
%\newblock{ \em Mathematics and its Applications}, Vol. 2 (1962) Gordon and Breach, Science Publishers, New York-London-Paris. 





\bibitem{Leon}
S. Leonardi, J. M\'{a}lek, J. Necas, M. Pokorn\'{y}. 
\newblock On axially symmetric flows in $\mathbb{R}^{3}$. 
\newblock {\em Z. Anal. Anwendungen} 18 (1999), no. 3, 639--649.
















\bibitem{Lem}
P. G. Lemari{\'e}-Rieusset.
\newblock Recent developments in the Navier--Stokes problem.
\newblock Chapman and Hall/CRC Research Notes in Mathematics, 431. Chapman and Hall/CRC, Boca Raton, FL, 2002.

\bibitem{Lem2}
P. G. Lemari{\'e}-Rieusset.
\newblock The Navier--Stokes problem in the 21st century.
\newblock Chapman and Hall/CRC, Boca Raton, FL, 2016. 
 
 
 

\bibitem{Ler} 
J. Leray. 
\newblock Sur le mouvement d'un liquide visqueux emplissant l'espace. 
\newblock {\em Acta Math.}, 63 (1934), no. 1, 193--248.


%\bibitem{AL2}
%\newblock
%\newblock J.-L. Lions. 
%\newblock {\em Quelques m\'ethodes de r\'esolution des probl\`emes aux limites non lin\'aires.} 
%\newblock {Dunod; Gauthier-Villars}, Paris (1969).




\bibitem{Lin}
F. Lin.  
\newblock A new proof of the Caffarelli--Kohn--Nirenberg theorem. 
\newblock{\em Comm. Pure Appl. Math.}, 51 (1998), no. 3, 241--257.


\bibitem{R}
R. Luc\`a.
\newblock On the size of the regular set of suitable weak solutions of the Navier--Stokes equation.
\newblock{\em Journ\'ees \'equations aux d\'eriv\'ees partielles}, (2015), Exp. no. 5, 14 p., doi: 10.5802/jedp.634 

%\bibitem{R}
%R. Luc\`a.
%\newblock On the size of the regular set of suitable weak solutions of the Navier--Stokes equation. 
%\newblock {\em Actes de Journ\'ees \'EDP}, (2015), doi: 10.5802/jedp.634.



\bibitem{Ren}
R. Luc{\`a}.
\newblock Regularity criteria with angular integrability for the Navier--Stokes equation.
\newblock {\em Nonlinear Anal.}, 105 (2014), 24--40. 


\bibitem{Mah}
A. Mahalov, E. S. Titi, S. Leibovich. 
\newblock Invariant helical subspaces for the Navier--Stokes equations. 
\newblock {\em Arch. Rational Mech. Anal.}, 112 (1990), no. 3, 193--222.







\bibitem{Planc}
F. Planchon.
\newblock Global strong solutions in Sobolev or Lebesgue spces to
the incompressible Navier--Stokes equations in $\mathbb{R}^{3}$.
\newblock {\em Ann. Inst. Henry Poincare, Anal. Non Lineaire}, 13 (1996), 319--336.




\bibitem{Ponce}
G. Ponce, R. Racke, T. C. Sideris and  E. S. Titi.
\newblock
Global stability of large solutions to the 3D Navier--Stokes equations. 
\newblock {\em Comm. Math. Phys.} 159 (1994), no. 2, 329--341. 



\bibitem{Prodi}
G. Prodi. 
\newblock Un teorema di unicit\`a per le equazioni
di Navier--Stokes (Italian).
\newblock{\em Ann. Mat. Pura Appl.}, 48 (1959), no. 4, 173--182.






\bibitem{S2}
V. Scheffer.
\newblock Hausdroff measure and the Navier--Stokes equations.
\newblock {\em Comm. Math. Phys.}, 55 (1977), no. 2, 97--112.







\bibitem{Ser} 
J. Serrin. 
\newblock On the interior regularity of weak solutions of the Navier--Stokes equations. 
\newblock {\em Arch. Rational Mech. Anal.}, 9 (1962) 187--195.



\bibitem{Ser2} 
J. Serrin. 
\newblock The initial value problem for the Navier--Stokes equations.
\newblock {\em 1963 Nonlinear problems (Proc. Sympos., Madison, Wis., 1962)}, 
pp. 69--98, Univ. of Wisconsin Press, Madison, Wis.  





\bibitem{Stein}
E. M. Stein.
\newblock Note on singular integrals.
\newblock {\em Proc. Amer. Math. Soc.}, 8 (1957), 250--254.






\bibitem{Tay}
M. E. Taylor 
\newblock Analysis on Morrey spaces and applications to Navier--Stokes and other evolution equations. 
\newblock {\em Comm. Partial Differential Equations}, 17 (1992), no. 9-10, pp. 1407--1456. 





\bibitem{Temam} 
R. Temam. 
\newblock Navier--Stokes equations, Theory and Numerical Analysis. 
\newblock Studies in Mathematics and its Applications, Vol. 2. North-Holland Publishing Co., Amsterdam-New York-Oxford, 1977.




\bibitem{Yud}
 M. R. Ukhovskii and V. I. Iudovich.
\newblock Axially symmetric flows of ideal and viscous fluids filling the whole space.
\newblock {\em   Prikl. Mat. Meh.}, 32 59--69 (Russian). Translated as {\em J. Appl. Math. Mech.}, 32 (1968) 52--61.







\end{thebibliography}
\end{document}